\newenvironment{proofc}{\textit{Proof of the claim.}}{}
\newtheorem{lemma}{Lemma}[section]
\newtheorem{theorem}[lemma]{Theorem}
\newtheorem{proposition}[lemma]{Proposition}
\newtheorem*{theorem*}{Theorem}
\theoremstyle{definition}
\newtheorem{definition}[lemma]{Definition}
\newtheorem{notation}[lemma]{Notation}
\theoremstyle{remark}
\newtheorem{example}[lemma]{Example}
\newtheorem{remark}[lemma]{Remark}
\newtheorem*{claim}{Claim}
\newtheorem*{example*}{Example}
\newcommand{\Z}{\mathbb Z}
\newcommand{\Q}{\mathbb Q}
\newcommand{\R}{\mathbb R}
\newcommand{\C}{\mathbb C}
\newcommand{\Zp}{\mathbb Z_p}
\newcommand{\Fp}{\mathbb F_p}
\newcommand{\Qp}{\mathbb Q_p}
\newcommand{\Gm}{\mathbb G{_m}}
\newcommand{\A}{\mathbb A}
\newcommand{\spec}{\text{Spec}}
\newcommand{\AHS}{\mathcal{A}_{1, \text{HS}}^\text{GSp}}
\newcommand{\ASh}{\mathcal{A}_{1, \text{Sh}}^\text{GSp}}
\newcommand{\AShGL}{\mathcal{A}_{1}^\text{GL}}
\newcommand{\AGL}{\mathcal{A}_0^\text{GL}}
\newcommand{\AGSp}{\mathcal{A}_0^\text{GSp}}
\newcommand\blfootnote[1]{%
  \begingroup
  \renewcommand\thefootnote{}\footnote{#1}%
  \addtocounter{footnote}{-1}%
  \endgroup
}
\title{On the geometry of integral models of Shimura varieties with $\Gamma_1(p)$-level structure}
\author{Giulio Marazza}
\date{}
\begin{document}

\maketitle

\begin{abstract}
    \small{\textsc{ABSTRACT}}.\, We study integral models of some Shimura varieties with bad reduction at a prime $p$, namely the Siegel modular variety and Shimura varieties associated with some unitary groups. We focus on the case where the level structure at $p$ is given by the pro-unipotent radical of an Iwahori subgroup, and we analyze the geometry of the integral models that have been proposed until now: we show that they are almost never normal and in some cases not flat over $\Zp$. We do so by showing the failure of these geometric properties on the corresponding local models, and we explain how the local model diagrams can be interpreted using the root stack construction.
\end{abstract}

\hypersetup{
	linkcolor=Mahogany,
	citecolor=teal,
	filecolor=olive,
	urlcolor=olive }

\section*{Introduction}

\blfootnote{giulio.marazza@uni-due.de}
In order to study arithmetic properties of Shimura varieties, it is of interest to have integral models over the ring of integers $\mathcal{O}_E$, where $E$ is the completion of the reflex field $\textbf{E}$ at some finite place $v$ over a prime number $p$. The behavior of the integral model typically changes depending on the choice of level subgroup that one makes. For example, in their book \cite{RZ} Rapoport and Zink define integral models of PEL Shimura varieties with parahoric level structure at $p$: writing $(\textbf{G},\textbf{X})$ for the Shimura datum, this means that the level subgroup splits as $K=K^pK_p$, with $K^p\subset \textbf{G}(\A_f^p)$ small enough and $K_p\subset\textbf{G}(\Qp)$ a parahoric subgroup. In this thesis we choose $K_p$ to be a pro-$p$ Iwahori subgroup, i.e. the pro-unipotent radical of an Iwahori subgroup. We call this a $\Gamma_1(p)$-level structure, whereas in the Iwahori case we speak of $\Gamma_0(p)$-level structure. We consider the Siegel modular varieties, for which integral models have been constructed by Haines and Stroh in \cite{HS} and by Shadrach \cite{Sha}, and some unitary Shimura varieties, for which integral models were constructed by Haines and Rapoport in \cite{HR} and by Shadrach in \cite{Sha}. Let us also mention \cite{Liu}, where Liu generalizes the work of Haines and Stroh to the Hilbert-Siegel case. There is also work of Harris and Taylor in \cite{HT}, in which they consider unitary Shimura varieties, and of Pappas in \cite{Pap}, where he considers Hilbert modular varieties. These models are defined over the corresponding integral model with Iwahori level structure by parameterizing generators of finite locally free group schemes of rank $p$, that arise naturally from the Iwahori moduli problem. We make the assumption that $p\neq 2$, since in this case the $\Gamma_0(p)$ and $\Gamma_1(p)$ level structures coincide. In the Siegel case, the Iwahori integral model $\mathcal{A}_0^\text{GSp}$ parametrizes chains of degree $p$ isogenies between abelian schemes
\[
A_0\longrightarrow A_1\longrightarrow \cdots \longrightarrow A_n,
\]
with some extra structure. The kernels $G_i$ of these isogenies are finite locally free group schemes of rank $p$: by the main result of \cite{OT}, such group schemes are classified by triples $(\mathcal{L},a,b)$ where $\mathcal{L}$ is a line bundle and $a\in\Gamma(\mathcal{L}^{p-1})$, $b\in\Gamma(\mathcal{L}^{1-p})$ are global sections satisfying $a\otimes b =w_p$ for a specific $w_p\in p\Zp^\times$: then, as in \cite[Section 3.3]{HR}, an Oort-Tate generator is a global section $x$ of $\mathcal{L}$ such that $x^{\otimes (p-1)}=a$. In \cite{Sha}, Shadrach defines a pro-$p$ Iwahori integral model $\ASh$ by extracting an Oort-Tate generator for each $G_i$ plus a $(p-1)$-th root of $p$. In \cite{HS}, Haines and Stroh construct another integral model $\AHS$ by parameterizing Oort-Tate generators $g_i,g_i^*$ of both $G_i$ their Cartier duals $G_i^*$, and imposing that $g_ig_i^*$ is independent on $i$; one can find this model also in Liu's work \cite{Liu}. The other case that we consider are unitary Shimura varieties, where the Shimura datum is induced by a division algebra with involution $(D,*)$, whose center $F$ is an imaginary quadratic extension of $\Q$, and an $\R$-algebra homomorphism $h_0:\C\longrightarrow D_\R$. The Iwahori integral model $\mathcal{A}_0^\text{GL}$ still represents the moduli space of chains of isogenies between abelian schemes with extra structure, but this time the degree of these isogenies is a power or $p$. Nevertheless, under suitable assumptions on the behavior at $p$ of the division algebra $D$, one can still find finite flat group schemes of rank $p$, coming from the associated chain of $p$-divisible groups, and then define the pro-$p$ Iwahori integral model $\AShGL$ by extracting Oort-Tate generators of these groups. We call these group schemes $G_i$ both in the unitary and in the Siegel case, and write $(\mathcal{L}_i,a_i,b_i)$ for their Oort-Tate parameters.\\
In order to study étale-local geometric properties of these integral models, it is very fruitful to consider their local model diagrams. In the Iwahori case, this is a smooth correspondence of the form
\[
\begin{tikzcd}
    & \widetilde{\mathcal{A}}_0 \arrow[dl, swap, "\phi"] \arrow[dr, "\psi"] & \\
    \text{M}_0 && \mathcal{A}_0
\end{tikzcd}
\]
where $\psi$ is a $\mathcal{G}$-torsor for a suitable (smooth) group scheme over $\Zp$ which acts also on $\text{M}_0$ and $\phi$ is smooth and $\mathcal{G}$-equivariant. Thus, local questions such as flatness or normality can be investigated on $\text{M}_0$, which is defined in linear algebraic terms and hence more amenable to computations. To construct a local model diagram in the pro-$p$ Iwahori case, one would like to find finite flat group schemes over $\text{M}_0$ having the same pullback to $\widetilde{\mathcal{A}}_0$ as the ones used to construct the $\Gamma_1(p)$ model, so that we could parametrize Oort-Tate generators for them. Unfortunately, on $\text{M}_0$ we can only find line bundles $\omega_i$ corresponding to $\mathcal{L}_i^{p-1}$, by looking at the Fitting ideals of $\omega_{G_i}$: these line bundles and their duals also carry sections which get identified with $a_i$ and $b_i$. However, in order to extract a root of a section of a line bundle, one needs a root of the line bundle itself: this means that on the local model side we have to simultaneously trivialize the $\omega_i$'s. In \cite{Sha} Shadrach does this by restricting to an open cover where the $\omega_i$'s are all trivial, while Liu  in \cite{Liu} considers the $\mathbb{G}_m^n$-torsor of trivializations of the $\omega_i$'s. Both constructions can be interpreted as atlases of (a fiber product of) root stacks of $\text{M}_0$ relative to the line bundles $\omega_i$, as we show in Chapter \ref{Chap local models}. We then use these local models to prove the following result about the geometry of the corresponding integral models in Chapter \ref{chap norm flat}.

\begin{theorem*}[\ref{non normal},\;\ref{Teo HS}]
    \begin{itemize}
        \item [$(1)$] The integral model $\mathcal{A}_1^\emph{GL}$ is not normal unless the signature of the unitary group is $(n-1,1)$, in which case it is regular.
        \item [$(2)$] The integral model $\mathcal{A}_{1,\emph{HS}}^\emph{GSp}$ is topologically flat but not flat over $\Zp$, except for the elliptic curve case. 
    \end{itemize}
\end{theorem*}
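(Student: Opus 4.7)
The plan is to transfer the geometric questions from the integral models to their local models via the smooth correspondences introduced earlier. Because $\psi$ is a torsor under a smooth group scheme and $\phi$ is smooth and equivariant, normality, flatness, and topological flatness are étale-local properties that pass freely between $\AShGL$ (resp.\ $\AHS$) and the corresponding pro-$p$ Iwahori local model. On the Iwahori local model everything is defined by explicit linear-algebraic equations, and the pro-$p$ Iwahori local model is obtained, on a chart trivializing the line bundles $\omega_i$, by adjoining generators $x_i, y_i$ subject to $x_i^{p-1}=a_i$, $y_i^{p-1}=b_i$ with $a_i b_i = w_p$.

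For Part $(1)$, I would reduce to the completed local ring of $\AGL$ at a worst point (one where every chain map degenerates mod $p$) and analyze the Oort--Tate extraction. Eliminating $a_i$ and $b_i$ leaves the relations $x_i^{p-1} y_i^{p-1} = w_p$ on top of the equations of $\AGL$. The central computation is to describe each $a_i$ explicitly inside the affine chart, which is a naive local model of Grassmannian type whose shape depends on the signature. For signature $(n-1,1)$ the relevant $a_i$ is, up to units, a single local coordinate, so the Oort--Tate algebra becomes a complete intersection of the form $\Z_p\llbracket t_1,\dots,t_r,x\rrbracket/(x^{p-1}\cdot t - w_p\cdot u)$ with $u$ a unit and $t$ a coordinate, which is regular by inspection. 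For every other signature $a_i$ factors as a product of at least two local coordinates, and I would exhibit a point on the special fiber at which Serre's criterion $S_2$ fails (equivalently, where embedded associated primes appear in the Oort--Tate extraction).

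For Part $(2)$, the Haines--Stroh condition $g_i g_i^* = g_j g_j^*$ imposes $n-1$ additional equations: setting $z := g_i g_i^*$ one has $z^{p-1}=w_p$ globally and $x_i y_i = z$ for every $i$. The local algebra is then the quotient of $\Z_p[x_i,y_i,z]_{i}$ by the ideal generated by $x_i y_i - z$, $x_i^{p-1}-a_i$, $y_i^{p-1}-b_i$, $a_i b_i - w_p$ and $z^{p-1}-w_p$, imposed on top of the equations of $\AGSp$. To establish non-flatness I plan to pick a worst point at which some $a_i$ vanishes and to exhibit an explicit nonzero element annihilated by $p$: the simultaneous relations $z^{p-1}=w_p\in p\,\Z_p^\times$ and $x_i y_i = z$ force a power of a coordinate to be a zero-divisor witnessing $p$-torsion in the local ring. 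Topological flatness is a separate statement: I would check by dimension count that, modulo nilpotents, the special fiber is a reduced union of strata of the expected dimension, so that the scheme-theoretic closure of the generic fiber is all of $\AHS$. The elliptic curve case is genuinely exceptional because the chain has length one, so no coupling between distinct indices can occur.

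The main obstacle will be the explicit identification of the sections $a_i$ in the affine charts of $\AGL$ and $\AGSp$, as this controls everything downstream. Once the factorization of $a_i$ as a product of local coordinates is understood from the combinatorics of the parahoric type and the signature, Serre's criterion for Part $(1)$ and the explicit construction of a $p$-torsion element for Part $(2)$ both reduce to direct computation; without this explicit form, both arguments are essentially blind.
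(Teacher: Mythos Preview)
Your reduction to the local model is correct, and so is the broad shape of the regularity argument for signature $(n-1,1)$: in that case the $\alpha_i$ really are single coordinates and one lands on $\Zp[u_0,\ldots,u_{n-1}]/((u_0\cdots u_{n-1})^{p-1}-p)$. But your plan for the non-normal cases in Part~$(1)$ has a genuine gap. You propose to find a point where $S_2$ fails. This cannot work: the pro-$p$ Iwahori local ring $C$ is finite free over the Iwahori local ring $B$, and $B$ is Cohen--Macaulay by He's theorem, so $C$ is Cohen--Macaulay and in particular $S_k$ for every $k$. The failure of normality is entirely a failure of $R_1$. The paper detects it differently depending on $g=\gcd(p-1,n-r)$: when $g>1$ it shows $U_1$ has $g$ irreducible components all meeting over the worst point; when $g=1$ it writes down an explicit element of $\mathrm{Frac}(C)$, namely $\beta_i u_i/u_k$, whose $(p-1)$-th power lies in $C$, and then uses an alcove computation to exhibit a stratum on which $u_k$ cannot divide $\beta_i u_i$. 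Your intuition that ``$a_i$ factors as a product of at least two coordinates'' is pointing at the right phenomenon, but the conclusion you draw from it (embedded primes, $S_2$ failure) is the wrong one.

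For Part~$(2)$, your sketch of the non-flatness is too optimistic about where the $p$-torsion comes from. The relations $u_iv_i=u_jv_j$ and $(u_iv_i)^{p-1}=w_p$ by themselves do not produce torsion over the Iwahori base; what makes the argument work is a specific quadratic relation \emph{inside} $B$, namely $a^1_{nn}a^0_{12}+a^2_{n-1,n}a^1_{11}=0$, coming from the matrix equations $\varphi_i M_i=M_{i+1}A_i$ (this is where $n\geq 2$ enters). Combining this with the root-extraction relations yields an explicit nonzero element $a^0_{12}u_0u_1^{p-2}+a^2_{n-1,n}v_0^{p-2}v_1$ killed by $p$. Your proposal does not isolate any such relation in $B$, and without it the torsion computation has no starting point. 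Finally, the paper's topological-flatness argument is not a dimension count: one lifts a mod-$p$ point to a mixed-characteristic DVR using flatness of the free cover (no $u_iv_i=u_jv_j$ imposed), and then corrects by a $(p-1)$-th root of unity on an index where $u_i$ or $v_i$ vanishes in the residue field to force the lift into the Haines--Stroh locus. Your dimension-count strategy would require knowing the reduced special fiber independently, which is not available.
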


It is perhaps surprising that the unitary $\Gamma_1(p)$-models with signature $(r,n-r)$ and $(n-r,r)$ are not isomorphic, since this happens at Iwahori level. This is due to the fact that Oort-Tate generators do not behave well with respect to Cartier duality: for example, over $\overline{\mathbb{F}}_p$ the constant group $\Z/p\Z$ has $p-1$ Oort-Tate generators (all the non-zero sections), while the only Oort-Tate generator of $\mu_p$ is the zero section. Another difference with the $\Gamma_0(p)$-case is that the geometry of these models depends more directly on the prime $p$: for instance, the local model in the unitary case has $g$ irreducible components, where $g=\text{gcd}(p-1,n-r)$, all non-canonically isomorphic to each other. Still in the case of unitary Shimura varieties, we can compute the normalization of an irreducible component of the $\Gamma_1(p)$-local model in the case of signature $(1,n-1)$.

\begin{theorem*}[\ref{normalization C1}]
    The normalization of an irreducible component of the pro-$p$ Iwahori local model in the case of signature $(1,n-1)$ is isomorphic to the spectrum of
    \[
    \frac{\Zp[u_{\underline{c}} \,|\, \underline{c}\in A]}{\begin{pmatrix}
        u_{\underline{c}_1}u_{\underline{c}_2}-u_{\underline{c}_3}u_{\underline{c}_4} \,|\, \scalebox{0.9}{$\underline{c}_1+\underline{c}_2=\underline{c}_3+\underline{c}_4$}\\
        (u_{\underline{c}^{(0)}} \cdots u_{\underline{c}^{(n-1)}})^{h}-p
    \end{pmatrix}},
 \]
where $A=\{\underline{c}\in \Z^n_{\geq 0} \,|\, \sum_ic_i=g \}$. 
\end{theorem*}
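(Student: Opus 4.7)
The strategy is to build the normalization of $C_1$ by hand: exhibit a finite birational morphism $\pi\colon\spec R \to C_1$, where $R$ denotes the ring in the statement, and then verify that $\spec R$ is normal. By the universal property of normalization, $\pi$ then must coincide with the normalization map.

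First, from the results of Chapter \ref{Chap local models} I would extract an explicit affine presentation of $C_1$. Since the Iwahori local model is smooth in signature $(1, n-1)$ and the Oort--Tate sections $a_i$ satisfy a relation of the form $\prod_i a_i = p$ (up to a unit), coming from the fact that the total isogeny is multiplication by $p$, adjoining Oort--Tate generators $x_0, \ldots, x_{n-1}$ with $x_i^{p-1} = a_i$ and then restricting to one component collapses the defining equations to the single hypersurface relation $\prod_i x_i^{p-1} = p$. Thus one writes the coordinate ring of $C_1$ in the form $B := \Zp[x_0, \ldots, x_{n-1}]/\bigl(\prod_i x_i^{p-1} - p\bigr)$, after eliminating the coordinates that become redundant on the chosen component.

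Second, I would introduce auxiliary variables $y_0, \ldots, y_{n-1}$ and consider the subring of $\Zp[y_0, \ldots, y_{n-1}]$ generated by the monomials $y^{\underline c}$ for $\underline c \in A$. This is the affine cone over the $g$-th Veronese of $\mathbb{P}^{n-1}_{\Zp}$, and its coordinate ring admits exactly the Plücker-type presentation $\Zp[u_{\underline c}]/(u_{\underline c_1}u_{\underline c_2}-u_{\underline c_3}u_{\underline c_4})$ appearing in the statement. Choosing $\underline c^{(0)},\ldots,\underline c^{(n-1)} \in A$ so that $\sum_i \underline c^{(i)} = g\cdot(1,\ldots,1)$, the product $\prod_i u_{\underline c^{(i)}}$ corresponds to $(y_0\cdots y_{n-1})^g$. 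Setting $h = (p-1)/g$ (which is an integer since $g = \gcd(p-1, n-1)$), the relation $(\prod_i u_{\underline c^{(i)}})^h = p$ reads $(y_0\cdots y_{n-1})^{p-1} = p$, so the assignment $x_i \mapsto y_i^g$ induces a ring map $B \to R$ matching the two $p$-relations.

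Third, I would check finiteness and birationality of $\pi$. Finiteness is clear because each $y_i$ is integral of degree $g$ over $B$ via $y_i^g - x_i = 0$. Birationality is verified on the generic fibre: after inverting $p$, the $y_i$ become $g$-th roots of the $x_i$, and the $g$-fold ambiguity is precisely what is used in Chapter \ref{Chap local models} to distinguish the $g$ irreducible components of the full pro-$p$ Iwahori local model. Having fixed the component $C_1$, the $y_i$'s are pinned down and recoverable from the $u_{\underline c}$'s by ratios such as $y_i = u_{(g-1)e_0 + e_i}/u_{g e_0}$ on the open locus where $u_{g e_0}$ is invertible. To finish, I would prove that $\spec R$ is normal using Serre's criterion $R_1+S_2$: the affine cone over the Veronese is a well-known normal Cohen--Macaulay toric $\Zp$-scheme (it is a direct summand of the polynomial ring under the diagonal $\mu_g$-action), and $S_2$ is preserved by modding out by the single regular element $(\prod_i u_{\underline c^{(i)}})^h-p$.

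The main obstacle is checking $R_1$ for $\spec R$, i.e.\ that its singular locus has codimension at least two. This requires analysing the toric stratification of $\spec R$ by the coordinate loci $y_i = 0$ and certifying that the Jacobian of the single mixed-characteristic equation does not vanish along any codimension-one stratum of the special fibre. It boils down to a combinatorial statement about the exponents appearing in the Veronese, in which the identity $gh = p-1$ plays the essential role: the extension $\Zp \hookrightarrow \Zp[t]/(t^{p-1}-p)$ is tamely ramified with ramification index $p-1$, and this tameness is exactly what prevents the defining relation of $R$ from introducing extra codimension-one singularities beyond those already visible in the Veronese.
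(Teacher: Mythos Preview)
There is a genuine gap: your presentation of $C_1$ is wrong, and this derails the whole argument. The ring $\Zp[x_0,\ldots,x_{n-1}]/\bigl((\prod_i x_i)^{p-1}-p\bigr)$ you write down is $C^{(n-1)}$, the local model in signature $(n-1,1)$, which the paper shows is already \emph{regular} (Theorem~\ref{non normal}(ii)); its normalization is itself, not the Veronese quotient in the statement. In signature $(1,n-1)$ the Oort--Tate sections are the complementary products $\alpha_i = x_0\cdots\hat{x}_i\cdots x_{n-1}$, so $\prod_i\alpha_i = p^{n-1}$, not $p$, and $C^{(1)}$ is $\Zp[x_i,v_i]/(x_0\cdots x_{n-1}-p,\ v_i^{p-1}-x_0\cdots\hat{x}_i\cdots x_{n-1})$. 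Worse, the irreducible component $C_1'$ is $C_1$ modulo its $p$-torsion, and as Remark~\ref{flatness irr components}(ii) explains, no clean presentation of $C_1'$ is available---this is precisely why a direct approach does not work. Even ignoring this, your proposed map $x_i\mapsto y_i^g$ does not match the $p$-relations: it sends $\prod_i x_i^{p-1}$ to $(\prod_i y_i)^{g(p-1)}=p^g$, not to $p$, so it is not a ring homomorphism unless $g=1$.

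The paper's route is in some sense dual to yours. Instead of mapping $C_1$ \emph{into} the candidate normalization, it constructs a finite map $f\colon C_1\to C^{(n-1)}$ (the regular ring above) via $v_i\mapsto u_0\cdots\hat{u}_i\cdots u_{n-1}$, checks on fraction fields that this is injective on $C_1'$, and then identifies $\tilde C_1$ with the subring $f_{\Qp}(C_{1,\Qp})\cap C^{(n-1)}$ of $C^{(n-1)}$. A monomial computation (Proposition~\ref{normalisation1}) shows this subring is exactly the $\Zp$-span of monomials $\underline u^{\underline c}$ with $\sum_i c_i$ divisible by $g$, i.e.\ the $g$-th Veronese of $\Zp[u_i]$ modulo $(\underline u^{p-1}-p)$. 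The presentation in the statement then comes from Sturmfels' Gr\"obner basis for the Veronese toric ideal (extended from fields to $\Zp$ in Proposition~\ref{presentation I}), together with a short argument identifying the kernel of the projection $V\to\tilde C_1$ as the principal ideal $(\underline u^{p-1}-p)V$. Normality of $\tilde C_1$ is automatic from its realization as the preimage of the regular ring $C^{(n-1)}$ under $f_{\Qp}$, so no Serre-criterion verification is needed.
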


\bigskip
\bigskip

I would like to heartly thank my advisor Ulrich Görtz for his encouragement and interest in my work. I would also like to thank Vytautas Paškūnas for a discussion, Thomas Haines for providing a proof of his and Benoıt Stroh, Michael Rapoport for sharing part of an email of Georgios Pappas about the flatness of some integral models and also Jürgen Herzog and Oleksandra Gasanova for some discussions about Veronese subrings. I would like to thank Timo Richarz as well, for accepting to review my thesis, of which this manuscript is part, and which was carried out in the DFG Research Training Group 2553.

\section{The integral models}\label{Chap integral models}

We start by recalling from the literature some standard facts about integral and local models of Shimura varieties, in order to set up the notation used later in the text.

\begin{notation}  
We will use the symbol $\mathcal{A}_i$ for the integral model with $\Gamma_i(p)$-level structure, for $i=0,1$. We will add a superscript $\text{GL}$, respectively $\text{GSp}$, to indicate that we are considering the unitary, respectively Siegel, case.    
\end{notation}

\subsection{The unitary case}

\subsubsection{The Iwahori level}\label{subsection Iwahori level}

Let $F$ be an imaginary quadratic extension of $\Q$, $(D,*)$ a division algebra with center $F$, of dimension $n^2$ over $F$, with an involution $*$ inducing the non-trivial element of $\text{Gal}(F/\Q)$. Let also $h_0:\C\rightarrow D_\R$ be an $\R$-algebra morphism such that $h_0(z)^*=h_0(\bar{z})$ and such that the involution $x\mapsto h_0(i)^{-1}x^*h_0(i)$ is positive definite. Consider the reductive group $\textbf{G}$ over $\Q$ given by
\[
\textbf{G}(R)=\{ x\in(D\otimes_\Q R)^\times \,|\, x^*x\in R^\times \}
\]
and let $\textbf{X}$ be $\textbf{G}(\R)$-conjugacy class of the inverse of $h_0$ restricted to $\C^\times$: this gives the unitary Shimura datum $(\textbf{G},\textbf{X})$. Assume that $p$ splits in $F$ as $(p)=\mathfrak{p}\mathfrak{p}^*$ and that $D$ splits over $\Qp$:
\[
D_{\Qp}=D_\mathfrak{p}\times D_{\mathfrak{p}^*}=M_n(\Qp)\times M_n(\Qp),
\]
so that the group $\textbf{G}_{\Qp}$ is isomorphic to $\text{GL}_n\times \Gm$. In order to define the integral model for the unitary Shimura varieties with Iwahori level structure at $p$, we need to specify some extra integral data: namely a $\Z_{(p)}$-order $\mathcal{O}_B\subset D^{op}$ and a self dual multichain of $\mathcal{O}_B$-lattices. We choose $\mathcal{O}_B$ to be the unique maximal $\Z_{(p)}$-order such that $\mathcal{O}_B\otimes \Zp$ corresponds to $\text{M}_n(\Zp)\times \text{M}_n(\Zp)$ under the isomorphism $D_{\Qp}\cong \text{M}_n(\Qp)\times \text{M}_n(\Qp)$; for the choice of the multichain we refer to \cite[Section 5.2]{Hai} .

\begin{definition}[\cite{RZ}, Definition 6.9]\label{def unitary Iwahori}
    The integral model of the Shimura variety relative to the above Shimura datum with Iwahori level structure at $p$ is the scheme $\mathcal{A}_0^\text{GL}$ representing the functor which sends a $\Zp$-scheme $S$ to the set of isomorphism classes of tuples $(A_\bullet, \overline{\lambda}, \overline{\eta})$, where:
    \begin{itemize}
        \item [$(i)$] $A_\bullet$ is a chain $A_0\xrightarrow{\alpha_0}A_1\xrightarrow{\alpha_1}\cdots\xrightarrow{\alpha _{n-1}}A_0$ of $n^2$-dimensional abelian schemes over $S$, with $\alpha_i$ isogenies of degree $p^{2n}$ such that the composition $\alpha_{n-1}\cdots\alpha_0$ is multiplication by $p$;
        \item [$(ii)$] each $A_i$ is equipped with an $\mathcal{O}_B\otimes \Z_{(p)}$-action commuting with the isogenies $\alpha_i$;
        \item [$(iii)$] the action of $\mathcal{O}_B\otimes \Z_{(p)}$ satisfies the Kottwitz condition;
        \item [$(iv)$] $\overline{\lambda}$ is a $\Q$-homogeneous class of principal polarizations;
        \item [$(v)$] $\overline{\eta}$ is a $K^p$-level structure.
    \end{itemize}
\end{definition}

To this integral model one can associate a local model diagram, i.e. a smooth correspondence of the form
\begin{equation}\label{locmod diagram GL}
\begin{tikzcd}
& \tilde{\mathcal{A}_0}^\text{GL} \arrow[ld, swap, "\phi"] \arrow[rd, "\psi"] & \\
\text{M}^\text{GL}_0 & & {\mathcal{A}_0^\text{GL}},
\end{tikzcd}
\end{equation}
where $\tilde{\mathcal{A}}_0$ represents the functor sending a $\Zp$-algebra $R$ to the set of pairs
\[
(A_\bullet, \lambda, \overline{\eta})\in \mathcal{A}_0(R); \;\;\; \gamma:H^1_{dR}(A_\bullet)\rightarrow\Lambda_\bullet\otimes_{\Zp}R.
\]
Here $\Lambda_\bullet=(\Lambda_0\subset\Lambda_1\subset\cdots\subset\Lambda_{n-1}\subset p^{-1}\Lambda_0)$ is the lattice chain inside the product $M_n(\Qp)\times M_n(\Qp)$ mentioned above and $\gamma$ is an isomorphism of lattice chains. The scheme $\text{M}^\text{GL}_0$ is called the {local model} for $\AGL$, and it represents the functor sending a $\Zp$-algebra $R$ to the set of commutative diagrams of the form
\[
\begin{tikzcd}
    R^{n} \arrow[r, "\varphi_0"] &R^{n} \arrow[r, "\varphi_1"] &\cdots \arrow[r, "\varphi_{n-2}"] &R^{n} \arrow[r, "\varphi_{n-1}"] &R^{n}\\
    F_0 \arrow[r] \arrow[u, hook] &F_1 \arrow[r] \arrow[u, hook] &\cdots \arrow[r] &F_{n-1} \arrow[r] \arrow[u, hook] &F_0, \arrow[u, hook]
\end{tikzcd}
\]
where $\varphi_i=\text{diag}(1,\dots, p,\dots, 1)$ with $p$ in the $(i+1)$-th component and $F_i$ are direct summands of $R^{n}$ of rank $r$. The general definition of local model can be found in \cite{RZ}, while in \cite{Goer} one can find a more explicit description. The map $\psi$ appearing in the diagram is defined as
\begin{align*}
    \psi: ((A_\bullet, \lambda, \overline{\eta}), \gamma)&\mapsto (A_\bullet, \lambda, \overline{\eta}),
\end{align*}
while $\phi$ is defined by taking the image of $\omega_{A_\bullet}$ under $\gamma$ and then using Morita equivalence (see \cite[Section 6.3.3]{Hai}). Here, $\omega_{A_\bullet}$ is the chain of Hodge bundles coming from the chain of abelian schemes $A_\bullet$.

\paragraph{The Iwahori local model for GL}

Given $F_\bullet \in \text{M}_0^\text{GL}(R)$, we use the notations $\bar{\varphi}_{i}$ and $\varphi_{i _{|F_i}}$ for the following maps:

\[
\begin{tikzcd}
    \overline{F}_0 \arrow[r, "\bar{\varphi}_0"] & \overline{F}_1 \arrow[r] &\cdots \arrow[r] &\overline{F}_{n-1} \arrow[r, "\bar{\varphi}_{n-1}"] &\overline{F}_0 \\
    R^n \arrow[r, "\varphi_0"] \arrow[u, ->>] &R^n \arrow[r] \arrow[u, ->>] &\cdots \arrow[r] &R^n \arrow[r, "\varphi_{n-1}"] \arrow[u, ->>] &R^n \arrow[u, ->>]\\
    F_0 \arrow[r, "\varphi_{0 _{|F_0}}"] \arrow[u, hook] &F_1 \arrow[r] \arrow[u, hook] &\cdots \arrow[r] &F_{n-1} \arrow[r, "\varphi_{n-1 _{|F_{n-1}}}"] \arrow[u, hook] &F_0 \arrow[u, hook].
\end{tikzcd}
\]

The scheme $\text{M}^\text{GL}_0$ is a closed subscheme of a product of Grassmannians over $\Zp$, hence it is projective, and its generic fiber is isomorphic to $\text{Grass}_r(\Zp^n)$. Its special fiber can be embedded into $\mathscr{F}_{\text{GL}_n}$, the affine flag variety for $\text{GL}_{n,\Fp}$, as in \cite[Section 4.2]{Goer}: this relies on the fact that $\mathscr{F}_{\text{GL}_n}$ can be described as a moduli space of complete lattice chains. It can also be defined as the fppf-quotient $L\text{GL}_n/L^+I$, where $L\text{GL}_n$ is the loop group of $\text{G}_{n,\Fp}$ and $L^+I$ is an Iwahori subgroup, consisting of matrices which are upper triangular modulo $t$. The ind-group scheme $L^+I$ acts by multiplication on the left on the affine flag variety: the orbits of this action are called Schubert cells. They are locally closed subschemes and can be indexed by the extended affine Weyl group of $\text{GL}_n$, which is the semidirect product $\widetilde{W}=X_*(T)\rtimes W$ where $X_*(T)$ is the cocharacter lattice of a maximal torus $T$ of $\text{GL}_n$ and $W$ is its Weyl group: we write elements of $\widetilde{W}$ as $t_\nu\bar{w}$ for $\nu\in X_*(T)$ and $\bar{w}\in W$. We can embed $\widetilde{W}$ inside $\text{GL}_n(\Fp(\!(t)\!))$ as follows: we identify $\lambda\in X_*(T)$ with $\lambda(t)\in \text{GL}_n(\Fp(\!(t)\!))$ and $W$ with the group of permutation matrices inside $\text{GL}_n(\Fp)\subset \text{GL}_n(\Fp(\!(t)\!))$. The Schubert cell associated with $w\in\widetilde{W}$ is the $L^+I$-orbit of $w$ seen inside $\mathscr{F}_{\text{GL}_n}(\Fp)$. There is another convenient way of parameterizing the Schubert cells using alcoves for $\text{GL}_n$ that we now explain.

\begin{definition}[\cite{KR}, Section 3.2]
    An alcove $x=(x_i)_i$ for $\text{GL}_n$ is a collection of $n$ vectors $x_0, \cdots, x_{n-1}\in \Z^n$ such that:
    \begin{itemize}
        \item[$(i)$] $x_0\leq x_1\leq \dots \leq x_{n-1}\leq x_n=x_0 + \textbf{1}$, where inequality is component-wise and $\textbf{1}=(1, \ldots, 1)\in \Z^n$.
        \item[$(ii)$] $\sum_j x_{i+1}(j) = \sum_j x_{i}(j) +1$
    \end{itemize}
        We say that an alcove $x$ has size $r$ if $\sum_j x_0(j)=r$. For an alcove $x=(x_i)_i$, we define its difference vectors to be $t_i^x=x_i-\omega_i$, where $\omega_i=(1^{(i)}, 0^{(n-i)})$ 
\end{definition}

We now choose $T$ to be the standard diagonal maximal torus of $\text{GL}_n$, so that $\widetilde{W}$ gets identified with $\Z^n\rtimes S_n$, with $S_n$ permuting the coordinates of $\Z^n$. The extended Weyl group acts simply transitively on the set of alcoves via
\[
(\lambda \sigma)\cdot (x_i)_i= (\lambda +\sigma\cdot x_i)_i
\]
so choosing as base point the alcove $\omega=(\omega_i)_{i=0}^{n-1}$ we can identify $\widetilde{W}$ with the set of alcoves: we write $w_x$ for the element of $\widetilde{W}$ corresponding to the alcove $x$. Any alcove $x$ determines an $\Fp$-valued point $\mathscr{L}_\bullet^{x}\in \mathscr{F}_{\text{GL}_n}$, given by
\[
\mathscr{L}^x_i=\langle t^{-x_i(1)+1}\text{e}_1, \dots, t^{-x_i(n)+1}\text{e}_n \rangle_{\Fp\llbracket t\rrbracket}\subset \Fp(\!(t)\!)^n.
\]
For example, the alcove $\omega=(\omega_i)_i$ corresponds to $t\lambda_\bullet$ and one sees that
\[
t\lambda_\bullet \subset \mathscr{L}^x_\bullet \subset \lambda_\bullet \hspace{0.3cm} \Leftrightarrow \hspace{0.3cm} \omega_i \leq x_i \leq \omega_i +\textbf{1} \hspace{0.2cm}\forall i=0,\ldots, n-1:
\]
alcoves satisfying this condition are called minuscule.

\begin{definition}[\cite{KR}]
    Let $x$ be an alcove for $\text{GL}_n$. Then $x$ is said to be:
    \begin{itemize}
        \item [$(i)$] $r$-permissible if it is minuscule of size $r$;
        \item [$(ii)$] $r$-admissible if there exists $\sigma\in W$ such that $w_x\leq \sigma(\omega_r)$ in the Bruhat order of $\widetilde{W}$.
    \end{itemize}
\end{definition}

The main result in the paper of Kottwitz and Rapoport is that an alcove is $r$-permissible if and only if it is $r$-admissible, see \cite[Theorem 3.5]{KR}. One can define for any cocharacter $\lambda\in X_*(T)$ its admissible subset as
\[
\text{Adm}(\lambda)=\{ w\in \widetilde{W} \;|\; \exists\, \tau \in W \; \text{such that} \; w\leq t_{\tau(\lambda)} \}:
\]
then $x$ is an $r$-admissible alcove if and only if $w_x\in\text{Adm}(\mu)$ for $\mu=(1^{(r)},0^{(n-r)})$. From this discussion, it is clear that we can stratify the special fiber of the local model as
\[
\text{M}_{0,\Fp}^\text{GL}=\bigcup_{w\in \text{Adm}(\mu)} S_w
\]
where $S_w$ is the stratum corresponding to $w$: this is the so-called Kottwitz-Rapoport stratification. Since its strata are $\mathcal{G}$-orbits, we can descend it through the local model diagram (\ref{locmod diagram GL}) to a stratification on $\mathcal{A}^\text{GL}_{0,\Fp}$: we will denote by $\mathcal{A}_{0,w}$ the stratum corresponding to $w$. The embedding of the special fiber of the local model into the affine flag variety is crucial in the proof of the flatness of $\text{M}_0^\text{GL}$ by Görtz in \cite{Goer}. Let us also mention that $\text{M}_0^\text{GL}$ is normal, thanks to \cite{He}. We finish this subsection by noting that to each $r$-permissible alcove $x$ we can associate an open subscheme of $\text{M}_0^\text{GL}$ defined by

\begin{equation}\label{open unitary}
    U_x(R)=\{ F_\bullet \in \text{M}^\text{GL}_0(R) \;|\; F_i\oplus ( \bigoplus_{t_i^x(j)=0} R\text{e}_j)=R^n \},
\end{equation}

where $t^x_i=x_i-\omega_i$, the difference vectors of $x$. These opens cover the local model and, by \cite[Proposition 4.5]{Goer}, $U_x$ contains the stratum $S_{w_x}$.

\subsubsection{The pro-\texorpdfstring{$p$}{} Iwahori level}

Before giving the definition of the integral model with $\Gamma_1(p)$-level structure, we need to introduce the notion of Oort-Tate generators, coming from the classification of finite locally free group schemes of rank $p$ established in \cite{OT}. First, we present the summary of Oort-Tate theory as in \cite[Theorem 3.3.1]{HR}.

\begin{theorem}[Tate-Oort]
    Let $OT$ be the $\Zp$-stack of finite locally free group schemes of rank $p$.
    \begin{itemize}
        \item [$(i)$] OT is an Artin stack isomorphic to
        \[
            OT=\left[ \emph{Spec}\left(\frac{\Zp[x,y]}{(xy-\omega_p)}\right)/\Gm \right]
        \]
        where $\Gm$ acts by $t\cdot (x,y)=(t^{p-1}x, t^{1-p}y)$ and $\omega_p$ is an explicit element of $p\Z_p^\times$.
        
        \item [$(ii)$] The universal group scheme $\mathcal{G}_{OT}$ over OT is given by
        \[
            \mathcal{G}_{OT}=\left[ \emph{Spec}\left(\frac{\Zp[x,y,z]}{(xy-\omega_p, z^p-xz)}\right)/\Gm \right]
        \]
        where $\Gm$ acts as before on $x, y$ and with weight $1$ on $z$, with zero section $z=0$.

        \item[$(iii)$] Cartier duality acts on $OT$ by interchanging $x$ and $y$.
    \end{itemize}
\end{theorem}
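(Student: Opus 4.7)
The plan is to reduce to the local affine picture and exploit the canonical $\mu_{p-1}$-action on any finite locally free group scheme of rank $p$. Given $G = \spec A$ over a $\Zp$-algebra $R$, the augmentation ideal $I = \ker(A \to R)$ is locally free of rank $p-1$, and the multiplication-by-$a$ maps $[a]\colon G \to G$ for $a \in \Fp^\times$ induce ring endomorphisms of $A$ preserving $I$. Since $\Zp$ contains the $(p-1)$-th roots of unity through the Teichmüller lift $\omega\colon\Fp^\times\hookrightarrow\Zp^\times$, these endomorphisms assemble into a $\mu_{p-1}$-action on $I$, and because $p-1$ is invertible in $R$ this action decomposes $I = L_1 \oplus \cdots \oplus L_{p-1}$ into weight spaces for the characters $\omega^i$.

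The first technical step is to show that each $L_i$ is a line bundle: a rank count gives $\sum_i \mathrm{rk}(L_i) = p-1$, and the surjectivity of the multiplication maps $L_1 \otimes L_{i-1} \to L_i$ coming from the ring structure of $A$, together with the fact that the iterated product $L_1^{\otimes(p-1)} \to L_{p-1}$ must fit into a complete decomposition of $A$, forces inductively that each $L_i$ is invertible and $L_i \cong \cL^{\otimes i}$ for $\cL := L_1$. This yields a canonical splitting $A = R \oplus \cL \oplus \cL^2 \oplus \cdots \oplus \cL^{p-1}$. The $p$-th power map on $A$ sends $\cL$ into $\cL^{p} \cong \cL \otimes \cL^{p-1}$, producing a section $a \in \Gamma(S, \cL^{p-1})$ characterized locally by $z^p = a\cdot z$. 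A dual computation on the comultiplication produces the second section $b \in \Gamma(S, \cL^{1-p})$, and the relation $ab = \omega_p$ for a specific $\omega_p \in p\Zp^\times$ is pinned down by comparison with the explicit Hopf algebra of $\mu_p$ over $\Zp$.

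These assignments define a morphism from the stack $OT$ to $[\spec(\Zp[x,y]/(xy-\omega_p))/\Gm]$, where the $\Gm$-weights $(p-1, 1-p)$ on $(x,y)$ reflect the rescaling action on a local trivialization of $\cL$. I would construct the inverse by sending a triple $(\cL, a, b)$ to the group scheme with Hopf algebra $\mathrm{Sym}(\cL^\vee)/(z^p - az)$ and comultiplication encoded by $b$; checking that the two constructions are mutually inverse yields $(i)$. Part $(ii)$ then follows immediately, since the universal group scheme on the quotient stack is the tautological object attached to the universal Oort-Tate data, producing exactly the explicit presentation stated. For $(iii)$, Cartier duality $G^\vee = \underline{\mathrm{Hom}}(G, \Gm)$ interchanges multiplication and comultiplication on Hopf algebras, and tracing this through the parameterization shows that the roles of $a$ and $b$ get swapped, hence $x \leftrightarrow y$ at the level of the quotient stack.

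The main obstacle will be verifying that the weight-space summands are line bundles and identifying the exact constant $\omega_p$: the first point requires a careful combination of rank counting with the Hopf algebra constraints (without the multiplication structure the $L_i$ need not be invertible a priori), and the second reduces to an explicit computation inside the Hopf algebra of $\mu_p$ which one then propagates to the universal situation by faithfully flat descent along the cover of $OT$ that trivializes $\cL$.
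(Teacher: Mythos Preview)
The paper does not prove this theorem: it is stated as a summary of Oort--Tate theory, citing \cite{OT} and \cite[Theorem 3.3.1]{HR}, and no argument is supplied beyond the brief Remark~\ref{OT decomposition} recalling that the augmentation ideal decomposes as $I=\bigoplus_{i=1}^{p-1} I_1^i$ with $I_1$ a line bundle. So there is nothing in the paper to compare your proposal against.

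That said, your outline tracks the classical Oort--Tate argument quite closely: the $\mu_{p-1}$-action via the maps $[a]$, the weight decomposition of the augmentation ideal, the identification of each weight space as a power of a single line bundle, and the extraction of the sections $a,b$ from the ring multiplication and the comultiplication. Two points deserve care if you intend this as more than a sketch. First, the assertion that each $L_i$ is invertible with $L_i\cong \mathcal L^{\otimes i}$ is the genuine content of \cite[Lemma~2]{OT}; the ``rank count plus surjectivity'' heuristic you describe is the right intuition, but in the original proof one needs to argue via the $\Fp$-module structure on $G(\bar k)$ for geometric points and then globalize, rather than by a direct inductive surjectivity argument. Second, your description of the section $b$ as coming from ``a dual computation on the comultiplication'' is correct in spirit but hides the work: one really passes to the Cartier dual $G^*$, applies the same decomposition to its augmentation ideal $I'$, and then identifies $I_1'$ with $I_1^\vee$; this is also where the precise constant $\omega_p$ emerges, and it is not simply read off from $\mu_p$ alone but from matching the two multiplication maps against each other (cf.\ \cite[\S2]{OT}). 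None of this invalidates your plan, but these are the places where a full proof has to do real work.
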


\begin{remark}\label{OT decomposition}
In \cite[Lemma 2]{OT}, Tate and Oort prove that the augmentation ideal of a finite locally free group scheme $G$ of rank $p$ over a $\Lambda$-scheme $S$ decomposes as $I=\bigoplus_{i=1}^{p-1}I_1^{i}$, where $I_1\subset I$ is a line bundle over $S$. They consider the map $a:I_1^{\otimes p}\rightarrow I_1$, induced by the multiplication in the Hopf algebra of $G$, and the map $a':{I'}_1^{\otimes p}\rightarrow I'_1$, where $I'$ is the augmentation ideal of the Cartier dual of $G$. The equivalence is then given by sending $G$ to the triple $(I_1',a,a')$: we call this triple the Oort-Tate parameters of $G$. Note that if $(\mathcal{L},a,b)$ are the Oort-Tate parameters for $G$, then $(\mathcal{L}^\vee,b^\vee,a^\vee)$ are the parameters for the Cartier dual $G^*$.   
\end{remark}

Now consider the closed substack
$$\mathcal{G}_{OT}^\times = \left[ \text{Spec}\left(\frac{\Zp[x,y,z]}{(xy-\omega_p, z^{p-1}-x)}\right)/\Gm \right]$$
of $\mathcal{G}_{OT}$, called the substack of generators of $\mathcal{G}_{OT}$.

\begin{definition}[\cite{HR}, Section 3.3]
    Let $S$ be a $\Z_p$-scheme, $G$ a finite locally free group scheme of rank $p$ over $S$. Let $G^\times$ be the closed subscheme of $G$ obtained by pulling back $\mathcal{G}_{OT}^\times\subset \mathcal{G}_{OT}$ along the map $S\xrightarrow{}OT$ corresponding to $G$. A global section $z\in G(S)$ is called an {Oort-Tate generator} if it factors through $G^\times$:
    \[
    \begin{tikzcd}
    G^\times \arrow[d] \arrow[r] & G \arrow[d] \arrow[r] & S \arrow[d] \arrow[ll, "z"', bend right] \\
    \mathcal{G}_{OT}^\times \arrow[r] & \mathcal{G}_{OT} \arrow[r] & OT  
    \end{tikzcd}
    \]
\end{definition}

\begin{remark}
    As shown in \cite[Remark 3.3.2]{HR}, $z\in G(S)$ is an Oort-Tate generator if and only if its multiples $\{0,c,[2]c,\ldots,[p-1]c\}$ form a ``full set of sections'' in the sense of Katz and Mazur, \cite[1.8.2]{KM}. When the base $S$ lives over $\Qp$ then $G$ is étale and, by \cite[lemma 1.8.5]{KM}, on geometric points the notion of Oort-Tate generator coincides with the usual notion of point of order $p$ of an abstract group. This will ensure that the $\Gamma_1(p)$-integral models have the correct generic fiber.
\end{remark}

In order to construct the integral model in the unitary case, we need to associate to each point $(A_\bullet, \overline{\lambda}, \overline{\eta})$ of $\mathcal{A}_0$ a tuple of finite locally free group schemes $G_i$ of rank $p$, as in \cite[Section 3.1]{Sha}. To do so, note that the $p$-divisible group $A_i[p^\infty]$ of each member of the chain $A_\bullet$ carries an action of $\mathcal{O}_B\otimes\Zp\cong M_n^\text{op}(\Zp)\times M_n^\text{op}(\Zp)$ and denote by $\text{e}_{11}\in M_n^\text{op}(\Zp)\times M_n^\text{op}(\Zp)$ the idempotent on the first factor. Applying it to the chain of isogenies between the $p$-divisible groups of the abelian varieties $A_i$, we obtain a chain
\[
\text{e}_{11}A_0[p^\infty]\rightarrow \text{e}_{11}A_1[p^\infty]\rightarrow\cdots \rightarrow \text{e}_{11}A_{n-1}[p^\infty]\rightarrow \text{e}_{11}A_0[p^\infty]
\]
of degree $p$ isogenies between $p$-divisible groups of height $n$, whose composition is multiplication by $p$. We define the finite locally free group scheme of rank $p$
\begin{equation}\label{Gi}
G_i=\text{ker}(\text{e}_{11}A_i[p^\infty]\rightarrow \text{e}_{11}A_{i+1}[p^\infty]):
\end{equation}
the collection of these groups for each $i=0,\ldots, n-1$ gives a map $\mathcal{A}_0^\text{GL}\xrightarrow{\varphi} OT^n$.

\begin{definition}[\cite{Sha}, Definition 3.8]
    The integral model $\AShGL$ is defined as the following pullback:
    \[
    \begin{tikzcd}
            {\mathcal{A}_{1}^\text{GL}} \arrow[d] \arrow[r] & (\mathcal{G}_{OT}^\times)^n \arrow[d] \\
            {\mathcal{A}_0^\text{GL}} \arrow[r, "\varphi"] & (OT)^n.
    \end{tikzcd}
    \]
\end{definition}

Thus, points of $\AGL$ over $(A_\bullet, \bar{\lambda}, \bar{\eta})\in \AShGL$ parametrize Oort-Tate generators for the groups $G_i$.

\subsection{The Siegel case}

In the Siegel case there are two constructions of $\Gamma_1(p)$-integral models, one by Shadrach in $\cite{Sha}$ and one by Haines and Stroh in their unpublished work \cite{HS}: we present both of them and discuss their relation.

\subsubsection{The Iwahori level}\label{subsection Iwahori GSp}

\begin{definition}[\cite{RZ}, Definition 6.9]\label{GSp iwahori integral}
    The integral model for the Siegel modular variety with Iwahori level structure at $p$ is the scheme $\mathcal{A}_0^\text{GSp}$ representing the functor which sends a $\Z_p$-scheme $S$ to the set of isomorphism classes of tuples $(A_\bullet, \lambda_0, \lambda_n, \overline{\eta})$, where:
    \begin{itemize}
        \item [$(i)$]$A_\bullet$ is a chain $A_0\xrightarrow{\alpha_0}A_1\xrightarrow{\alpha_1}\cdots\xrightarrow{\alpha _{n-1}}A_n$ of $n$-dimensional abelian schemes over $S$, with $\alpha_i$ isogenies of degree $p$;
        \item [$(ii)$] the maps $\lambda_0, \lambda_n$ are principal polarizations of $A_0, A_n$ respectively, making the loop starting at any $A_i$ or $A_i^\vee$ in the diagram
        \[
        \begin{tikzcd}
            A_0 \arrow[r, "\alpha_0"] & A_1 \arrow[r,"\alpha_1"] & \cdots \arrow[r,"\alpha_{n-1}"] & A_{n} \arrow[d,"\lambda_n"] \\
            A_0^\vee \arrow[u, "\lambda_0^{-1}"] & A_1^\vee \arrow[l, "\alpha_0^\vee"] & \cdots \arrow[l, "\alpha_1^\vee"] & A_{n}^\vee \arrow[l, "\alpha_{n-1}^\vee"]
        \end{tikzcd}
        \]
        multiplication by $p$;
        \item [$(iii)$] $\overline{\eta}$ is a $K^p$-level structure.
    \end{itemize}
\end{definition}

There is a local model diagram associated to this integral model as well: 
\[
    \begin{tikzcd}
        & \tilde{\mathcal{A}_0}^\text{GSp} \arrow[ld, swap, "\phi"] \arrow[rd, "\psi"] & \\
        \text{M}^\text{GSp}_0 & & {\mathcal{A}_0}^\text{GSp}.
    \end{tikzcd}
\]
Before explaining the terms appearing in this diagram, let us write $(\cdot,\cdot)$ for the standard symplectic pairing on $\Zp^{2n}$ given by the matrix   $$J_{2n}=\begin{pmatrix}
    & \textbf{J} \\
    -\textbf{J} &
\end{pmatrix},$$ where $\textbf{J}=(\delta_{n-i+1,j})_{ij}$. For $i=0,\ldots, 2n-1$, let $\Lambda_i=\langle p^{-1}\text{e}_1,\ldots,, p^{-1}\text{e}_i, \text{e}_{i+1},\ldots, \text{e}_{2n} \rangle_{\Zp}$: this gives rise to a complete lattice chain $\Lambda_\bullet$ in $\Qp^n$
\[
\cdots \rightarrow \Lambda_0\rightarrow \Lambda_1\rightarrow \cdots \Lambda_{2n-1}\rightarrow p^{-1}\Lambda_0\rightarrow \cdots
\]
which is self dual with respect to the pairing $(\cdot,\cdot)$ (in fact, $\Lambda_i^*=p\Lambda_{2n-1}$). For integers $a\leq b$ we write $\Lambda_\bullet^{[a,b]}$ for the lattice chain $(\Lambda_a\subset \cdots \subset \Lambda_b)$. Then, $\tilde{\mathcal{A}}_0$ represents the functor sending a $\Zp$-algebra $R$ to the set of pairs
\[
(A_\bullet, \lambda, \overline{\eta})\in \mathcal{A}_0(R); \;\;\; \gamma:H^1_{dR}(A^+_\bullet)\rightarrow\Lambda_\bullet^{[0,2n]}\otimes_{\Zp}R,
\]
where $\gamma$ is an isomorphism of polarized multichains of $R$-modules and by $A_\bullet^+$ we mean the chain of degree $p$ isogenies
\[
A_0\rightarrow A_1 \rightarrow \overset{\alpha_i}{\cdots} \rightarrow A_n\cong A_n^\vee \rightarrow A_{n-1}^\vee \rightarrow \overset{\alpha_i^\vee}{\cdots} \rightarrow A_0^\vee\cong A_0.
\]
The local model $\text{M}^\text{GSp}_0$ represents the functor sending a $\Zp$-algebra $R$ to the set of commutative diagrams of the form

\begin{equation}
\begin{tikzcd}
    \Lambda_{0,R} \arrow[r] &\Lambda_{1,R} \arrow[r] &\cdots \arrow[r] &\Lambda_{2n-1,R} \arrow[r] &p^{-1}\Lambda_{0,R}\\
    F_0 \arrow[r] \arrow[u, hook] &F_1 \arrow[r] \arrow[u, hook] &\cdots \arrow[r] &F_{2n-1} \arrow[r] \arrow[u, hook] &p^{-1}F_0, \arrow[u, hook]
\end{tikzcd}
\end{equation}

where $F_i$ are direct summands of $\Lambda_{i,R}$ of rank $n$ such that for each $i=0,\ldots,2n-1$ the map 
\begin{equation}\label{condition GSp loc model}
F_i\hookrightarrow \Lambda_{i,R} \cong \Lambda_{2n-i,R}^\vee \rightarrow F_{2n-i}^\vee
\end{equation}
is zero. The two maps appearing in the diagram are defined as:
\begin{align*}
    \psi: ((A_\bullet, \lambda, \overline{\eta}), \gamma)&\mapsto (A_\bullet, \lambda, \overline{\eta})\\
    \phi: ((A_\bullet, \lambda, \overline{\eta}), \gamma)&\mapsto \gamma(\omega_{A^+_\bullet})\subset \Lambda_\bullet\otimes_{\Zp}R.
\end{align*}
Here $\omega_{A_\bullet^+}$ is the associated chain of linear maps between the Hodge bundles of the abelian varieties appearing in $A_\bullet^+$. Since this is a contravariant construction, the maps between Hodge bundles go in the opposite direction, thus $F_i\rightarrow F_{i+1}$ corresponds to $\omega_{A_{i+1}}\rightarrow \omega_{A_i}$.

\paragraph{The local model for GSp}

It is clear that $\text{M}_0^{\text{GSp}_{2n}}$ is a closed subscheme of $\text{M}_0^{\text{GL}_{2n}}$, hence we can restrict the stratification of $\text{M}_{0,\Fp}^{\text{GL}_{2n}}$ to one on $\text{M}_{0,\Fp}^{\text{GSp}_{2n}}$. On the other hand, we can embed the special fiber of $\text{M}_0^{\text{GSp}_{2n}}$ inside an affine flag variety for $\text{GSp}_{2n}$, similarly as in the unitary case (see \cite[Chapter 5]{Goer2}), obtaining a commutative diagram 
\[
\begin{tikzcd}
\text{M}_0^{\text{GL}_{2n}} \arrow[r, hook]                  & \mathscr{F}_{\text{GL}_{2n}}                  \\
\text{M}_0^{\text{GSp}_{2n}} \arrow[r, hook] \arrow[u, hook] & \mathscr{F}_{\text{GSp}_{2n}}. \arrow[u, hook]
\end{tikzcd}
\]
As in the unitary case, $\text{M}_{0,\Fp}^\text{GSp}$ is the union of certain Schubert varieties of $\mathscr{F}_{\text{GSp}_{2n}}$, that can be indexed by permissible alcoves for $G=\text{GSp}_{2n}$ (which we call $G$-alcoves, following the notation of \cite[Section 4]{KR}). We recall their definition:

\begin{definition}[\cite{KR}]
    A $G$-alcove $x=(x_i)_i$ is an alcove for $\text{GL}_{2n}$ satisfying the following condition for some $d\in \Z$:
    \[
    x_{2n-1-i}=\textbf{d} + \theta(x_i)
    \]
    for $i=0,\ldots, 2n-1$. Here $\textbf{d}=(d,\ldots,d)\in \Z^{2n}$ and $\theta$ is the automorphism of $\mathbb{R}^{2n}$ defined by
    \[
    \theta(x_1,x_2,\ldots, x_{2n})=(-x_{2n},\ldots, -x_2, -x_1).
    \]
\end{definition}
The size of a $G$-alcove is defined by regarding it as an alcove for $\text{GL}_{2n}$ and similarly for the notion of minuscule $G$-alcove. The same reasoning as for the $\text{GL}_n$-case shows that the stratification of $\text{M}_{0,\Fp}^\text{GSp}$ is indexed by minuscule $G$-alcoves of size $n$. Finally, to any minuscule $G$-alcove $x$ we can associate an open subscheme $U_x$ of $\text{M}_0^\text{GSp}$ as in (\ref{open unitary}).

\subsubsection{The pro-\texorpdfstring{$p$}{} Iwahori level}

There are two different integral models for the Siegel modular variety with $\Gamma_1(p)$-level structure, proposed respectively by Shadrach in \cite{Sha} and Haines and Stroh in \cite{HS}. We recall their definitions.

\begin{definition}[\cite{Sha},\cite{HS}]
\begin{itemize}
    \item [(1)]Shadrach's integral model $\mathcal{A}_{1, \text{Sh}}^\text{GSp}$ is defined by the following pullback square
        \[
        \begin{tikzcd}
            {\mathcal{A}_{1, \text{S}}^\text{GSp}} \arrow[d] \arrow[r] & (\mathcal{G}_{OT}^\times)^n \arrow[d] \\
            {\mathcal{A}_0^\text{GSp}[t]} \arrow[r, "\varphi"] & (OT)^n,                               
        \end{tikzcd}
        \]
        where $\mathcal{A}_0^\text{GSp}[t]=\underline{\text{Spec}}_{\mathcal{A}_0^\text{GSp}}\left(\frac{\mathcal{O}[t]}{(t^{p-1}-p)}\right)$ and $\varphi$ is the projection to $\mathcal{A}_0^\text{GSp}$ followed by the map corresponding to the kernels of the isogenies $\alpha_0,\ldots,\alpha_{n-1}$. 
    \item[(2)]Haines and Stroh's integral model $\AHS$ is defined by the following pullback square
     \[
        \begin{tikzcd}
            \AHS \arrow[d] \arrow[r] & HS= \left[ \text{Spec}\left(\scalebox{1.3}{$\frac{\Z_p[x_i, y_i, u_i, v_i]_{i=0}^{n-1}}{\left(\substack{\mathstrut x_iy_i-\omega_p, \;u_0v_0-u_iv_i \\ u_i^{p-1}-x_i,\; v_i^{p-1}-y_i}\right)_i}$}\right) \Big/ \mathbb{G}_m^n \right] \arrow[d] \\
            {\mathcal{A}_0^\text{GSp}} \arrow[r, "\varphi"] & (OT)^n\cong \left[ \text{Spec}\left(\scalebox{1.3}{$\frac{\Z_p[x_i, y_i]_{i=0}^{n-1}}{(x_iy_i-\omega_p)_i}$}\right) / \mathbb{G}_m^n \right],         
        \end{tikzcd}
    \]
    where $\mathbb{G}_m^n$ acts by $(t_i)_i\cdot (x_i, y_i, u_i, v_i)_i=(t_i^{p-1}x_i, t_i^{1-p}y_i, t_iu_i, t_i^{-1}v_i)_i$.   
\end{itemize}
\end{definition}

\begin{notation}
We will write $G_i$ both for $\text{Ker}(\alpha_i)$ and the group scheme constructed in (\ref{Gi}), its Cartier dual will be denoted by $G_i^*$. For such $G_i$, we will write $(\mathcal{L}_i, a_i, b_i)$ for its Oort-Tate parameters. Hence $(\mathcal{L}_i^\vee, b_i^\vee, a_i^\vee)$ will be the Oort-Tate parameters for $G_i^*$.

\end{notation}

\begin{remark}\label{points pro-p Iw Siegel}
    Points of these two integral models over $(A_\bullet, \lambda, \overline{\eta})\in \AGSp$ admit the following description:
    \begin{itemize}
        \item[$(a)$] in Shadrach's case, they consist of tuples $(t, g_i)_{i=0}^{n-1}$, where $t$ is a $(p-1)$-th root of $p$ over the base and $g_i$ is an Oort-Tate generator of $G_i$;
        \item[$(b)$] in Haines-Stroh's case, they consist of tuples $(g_i,g_i^*)_{i=0}^{n-1}$, where $g_i$ (respectively $g_i^*$) are Oort-Tate generators of $G_i$ (respectively $G_i^*$) such that $g_ig_i^*$ is independent of $i$.
    \end{itemize}
    These two constructions give the same result on the generic fiber. In fact, there is a finite map
    \begin{align*}
    &\AHS\xrightarrow{\makebox[2cm]{$\rho$}}\ASh \\
    ((A_\bullet, \lambda&, \overline{\eta}), g_i, g_i^*)\mapsto ((A_\bullet, \lambda, \overline{\eta}), g_i, g_0g_0^*) 
    \end{align*}
    which admits an inverse over $\Qp$, namely
    \begin{align*}
    &{\ASh}_{\mathbb{Q}_{p}}\xrightarrow{\makebox[2cm]{}}{\AHS}_{\mathbb{Q}_{p}} \\
    ((A_\bullet, \lambda&, \overline{\eta}), g_i, t)\mapsto ((A_\bullet, \lambda, \overline{\eta}), g_i, g_i^{-1}t).
    \end{align*}
\end{remark}

Shadrach also considers another intermediate level structure at $p$ in the Siegel case, namely he takes $K_p$ to be the subgroup of $\text{GSp}_{2n}$ stabilizing the self dual lattice chain $\Lambda_\bullet$ and pointwise fixing the intermediate quotients $\Lambda_{i+1}/\Lambda_i$ for $i=0,\ldots, n-1$. This imposes that elements in $K_p$ are upper triangular modulo $p$ and the first $n$ entries of their diagonal belong to $1+p\Zp$: however, this only implies that the other entries belong to $k+p\Zp$ for the same $k\in \{1,\ldots,p-1\}$, which could be different from 1. Let us denote this subgroup by $\Gamma_1'(p)$: Shadrach's model in this case, denoted by ${\mathcal{A}'}_1^\text{GSp}$, is defined as follows.

\begin{definition}[\cite{Sha}, Definition 4.1]
    ${\mathcal{A}'}_1^\text{GSp}$ is the fiber product
    \[
        \begin{tikzcd}
            {{\mathcal{A}'}_1^\text{GSp}} \arrow[d] \arrow[r] & (\mathcal{G}_{OT}^\times)^n \arrow[d] \\
            {\mathcal{A}_0^\text{GSp}} \arrow[r, "\varphi"] & (OT)^n,        
        \end{tikzcd}
    \]
    where the map $\varphi$ corresponds to the groups $G_i$ for $i=0,\ldots,n-1$.
\end{definition}

\paragraph{The elliptic curve case}

Let us highlight how the various integral models relate between themselves in the case of elliptic curves. The integral model $\mathcal{A}_0^\text{GL}$ in this case parametrizes chains of degree $p^4$ isogenies
\[
A_0\xlongrightarrow{\alpha_0}A_1\xlongrightarrow{\alpha_1}A_0
\]
between four-dimensional abelian schemes plus the extra conditions of Definition \ref{def unitary Iwahori}, while $\mathcal{A}_0^\text{GSp}$ is the moduli space of degree $p$ isogenies between two elliptic curves
\[
E\xlongrightarrow{\alpha}E'
\]
plus a level-structure away from $p$, see Definition \ref{GSp iwahori integral}. These spaces are not isomorphic, but their local model are, as it is easy to deduce from their definitions (note that the isotropicity conditions in the symplectic case are vacuous for $\text{GSp}_2=\text{GL}_2$). In the deeper level situations of $\Gamma_1(p)$ and $\Gamma'_1(p)$ we have three possible integral models for the Siegel modular variety:
\begin{itemize}
    \itemsep-0em
    \item [$(1)$] $\ASh$, parameterizing an Oort-Tate generator of $\text{ker}(\alpha)$ and a $(p-1)$-th root of $p$;

    \item [$(1)$] ${\mathcal{A}'}_1^\text{GSp}$, parameterizing an Oort-Tate generator of $\text{ker}(\alpha)$;

    \item [$(1)$] $\AHS$, parameterizing an Oort-Tate generator of $\text{ker}(\alpha)$ and one of $\text{ker}(\alpha^\vee)$.
\end{itemize}

They sit in the commutative diagram
\[
\begin{tikzcd}
& \AHS \arrow[ld, swap, "\rho"] \arrow[rd] &      \\
\ASh \arrow[rr, "\theta"] & & {\mathcal{A}'}_1^\text{GSp} 
\end{tikzcd}
\]
where the map $\rho$ was defined after Remark \ref{points pro-p Iw Siegel} and the map $\theta$ forgets the $(p-1)$-th root of $p$. On the local model side the picture will turn out to be the following

\[
\begin{tikzcd}
& {\frac{B[u,v]}{\left(\substack{u^{p-1}-x\\v^{p-1}-y}\right)}} & \\
{\frac{B[u,t]}{\left(\substack{u^{p-1}-x\\t^{p-1}-p}\right)}} \arrow[ru, "t\mapsto uv"] & & {\frac{B[u]}{(u^{p-1}-x)}} \arrow[ll] \arrow[lu],
\end{tikzcd}
\]
as one can check by looking at Definition \ref{def C} and Definition \ref{def C Siegel}. Here 
\[
B=\Zp[x,y]/(xy-p)
\]
is the open neighborhood of the worst singular point in the Iwahori local model, which is the same for $\mathcal{A}_0^\text{GL}$ and
$\mathcal{A}_0^\text{GSp}$, as one easily checks from their definitions. Moreover, the local models of $\AHS$ and of $\AGL$ are isomorphic as well.	
\section{The local model diagrams}\label{Chap local models}

In this section we give a uniform interpretation of the local model constructed by Liu in \cite{Liu} and by Shadrach in \cite{Sha}, showing how both can be realized as the atlases of a (fiber product of) root stack of suitable line bundles on the corresponding local model with Iwahori level structure. For the notion of root stack and some of its properties, we refer to \cite[Section 2]{Cad}.

\subsection{Root stacks}

Let $n\geq 0$ and consider the $\Gm$-action on $\A^1$ given by $t\cdot x= t^nx$. Write $[\A^1/\Gm_{,n}]$ for the corresponding stack quotient: it classifies pairs $(L,s)$ consisting of a line bundle $L$ together with a global section of $(L^\vee)^n$; when $n=1$ these pairs are usually called {generalized Cartier divisors}. We write
\[
(\;)^n:[\A^1/\Gm]\longrightarrow [\A^1/\Gm]
\]
for the map induced by raising to the $n$-th power both on $\A^1$ and on $\Gm$. Note that it factors as 
\begin{equation}\label{factor n power map}
[\A^1/ \Gm]\xrightarrow{(n,1)} [\A^1/ \Gm_{,n}]\xrightarrow{(1,n)} [\A^1/ \Gm]
\end{equation}
where $(n,1)$ is the map induced by raising to the $n$-th power on $\A^1$ and the identity on $\Gm$ (vice versa for $(1,n)$). Let now $X$ be a scheme and $(L,s)$ a generalized Cartier divisor on $X$ corresponding to a map $X\rightarrow [\A^1/\Gm]$

\begin{definition}
    The $n$-th root stack of $X$ relative to $(L, s)$ is defined by the following pullback diagram:
    \[
    \begin{tikzcd}
        \sqrt{X} \arrow[r] \arrow[d] & \left[\A^1/ \Gm\right] \arrow[d, "(\;)^n"] \\
        X \arrow[r, "(L{,}s)"] & \left[\A^1/ \Gm\right].
    \end{tikzcd}
    \]
\end{definition}

If the generalized Cartier divisor is not clear from the situation, we will write $X\sqrt[n]{(L,s)}$ instead of $\sqrt{X}$. Taking the pullback of the maps (\ref{factor n power map}) along the atlas $\A^1\rightarrow[\A^1/\Gm]$ gives
\[
\begin{tikzcd}
{[\A^1/\mu_n]} \arrow[d] \arrow[r] & \A^1\times B\mu_n \arrow[d] \arrow[r] & \A^1 \arrow[d] \\
{[\A^1/\Gm]} \arrow[r, "(n{,}1)"] & {[\A^1/\Gm_{,n}]} \arrow[r, "(1{,}n)"]     & {[\A^1/\Gm]},  
\end{tikzcd}
\]
where $\mu_n$ acts on $\A^1$ by $\zeta\cdot t=\zeta t$. Therefore, the map $(1,n)$ is a banded $\mu_n$-gerbe and the map $(\;)^n$ is not representable. On the other hand, the map $(n,1)$ is representable by schemes. In fact, by taking the pullback
    \[
    \begin{tikzcd}
    X_1 \arrow[r] \arrow[d] & {[\A^1/\Gm]} \arrow[d, "(n{,}1)"] \\
    X \arrow[r, "(L{,}s)"] & {[\A^1/\Gm_{,n}]}      
    \end{tikzcd}
    \]
we obtain a space $X_1$ parametrizing $n$-th roots in $\Gamma(X,L)$ of $s\in \Gamma(X,(L^\vee)^{\otimes n})$: since sections of line bundles don't have automorphism, it is a scheme (in fact, it is the degree $n$ cyclic cover of $X$ determined by the generalized Cartier divisor $(L,s)$). This is in contrast with taking pullbacks along the map $(1,n)$: in this case, we are taking roots of a line bundle and such roots do admit non-trivial automorphisms. There are two natural atlases for a root stack that one can consider.
\begin{itemize}
    \item [(1)] The first one is obtained by pulling back the canonical atlas of $[\A^1/\Gm]$:
    \[
    \begin{tikzcd}
    A_1 \arrow[r] \arrow[d] & \A^1 \arrow[d]\\
    \sqrt{X} \arrow[r] & \left[ \A^1/\Gm \right].
    \end{tikzcd}
    \]
    It fits in the following diagram 
    \[
    \begin{tikzcd}[row sep = tiny, column sep = small]
        & A_1 \arrow[dd] \arrow[rr] \arrow[ld] && \A^1 \arrow[dd] \arrow[ld, "(\;)^n"] \\
        P \arrow[dd] \arrow[rr] && \A^1 \arrow[dd] & \\
        & \sqrt{X} \arrow[rr] \arrow[ld] && \left[ \A^1/\Gm \right] \arrow[ld, "(\;)^n"] \\
        X \arrow[rr, swap, "{(L,s)}"] && \left[ \A^1/\Gm \right] &   \\
    \end{tikzcd}
    \]
    where $P$ is the $\Gm$-torsor associated to $L$, and the top horizontal square is cartesian. In fact, $\sqrt{X}\cong [A_1/\Gm]$, see \cite[Proposition 2.3.5]{Cad}

    \item [(2)] The second depends on the choice of a trivializing cover $\{U_i\}_{i\in I}$ for $L$, and it is given by
    \[A_2=\coprod_{i\in I}{\text{Spec}}_{U_i}\left(\frac{\mathcal{O}_{U_i}[t]}{(t^n-s_{|U_i})}\right).
    \]
    This follows from the following isomorphism when the line bundle is trivial:
    \[
    X\sqrt[n]{(\mathcal{O}_X, s)}\cong \left[ {\text{Spec}}_{X}\left(\frac{\mathcal{O}_{X}[t]}{(t^n-s)}\right) / \mu_{n, X} \right],
    \]
    where $\mu_n$ acts by $\zeta \cdot t=\zeta t$ (see \cite[Example 2.4.1]{Cad}).
\end{itemize}

We finish this section by describing what happens when taking the root stack of a generalized Cartier divisor $(L, s)$ such that $L$ admits an $n$-th root $L'$ on $X$. This is the situation that we will find ourselves in when dealing with the local model diagram (on the Shimura side). In this case, the map $X\xrightarrow{(L,s)} [\A^1/\Gm]$ factors as $X\xrightarrow{(L',s)} [\A^1/\Gm_{,n}]\xrightarrow{(1,n)} [\A^1/\Gm]$ and we obtain the following diagram
\begin{equation}\label{section partial rootstack}
    \begin{tikzcd} [row sep = small, column sep = small]
        X_1 \arrow[rr] \arrow[dd] && \sqrt{X}\arrow[dd] \arrow[rr] & & {[\A^1/\Gm]} \arrow[dd, "{(n,1)}"] \\
        && & & \\
        X \arrow[rr, "\nu"] && X' \arrow[ddd] \arrow[rr] & & {[\A^1/\Gm_{,n}]} \arrow[ddd, "{(1,n)}"] \\
        && & & \\
        && & & \\
        && X \arrow[rr, swap, "{(L,s)}"] \arrow[rruuu, "{(L',s)}" description, dotted] &  & {[\A^1/\Gm]},
    \end{tikzcd}    
\end{equation}

where all squares are cartesian and $\nu$ is induced by $\text{id}_X$ and $(L',s)$. In particular, $\nu$ is a section of the banded $\mu_n$-gerbe $X'\rightarrow X$ which is then trivial, so we can write it as $B\mu_{n,_{X}}=X\times B\mu_n\xlongrightarrow{\text{pr}}X$ and the section $\nu$ corresponds to a map $X\longrightarrow B\mu_{n,_X}$. Now consider the pullback diagram
\[
\begin{tikzcd}
P \arrow[d] \arrow[r, "\nu'"] & X \arrow[d, "\text{atl}"] \\
X \arrow[r, "\nu"]    & {B\mu_{n,_X}},        
\end{tikzcd}
\]
where $P\rightarrow X$ is the $\mu_n$-torsor given by $\nu$. The map $\nu'$ is a torsor under the group scheme $\underline{\text{Aut}}(P)$, which is a form of $\mu_n$, hence it is in particular a $\mu_n$-torsor: assuming that $n$ is invertible on $X$, we obtain that $\nu'$, and hence $\nu$, is smooth. In the sequel we will consider the situation in which $X$ is a $\Zp$ scheme and $n=p-1$, which is invertible.

\subsection{Local model diagrams}

We can now explain the construction of the local model diagrams for the integral models that we are considering, and how it relates to root stacks. The key observation is that we can express the map $\mathcal{G}_{OT}^\times\rightarrow OT$ from the subscheme of generators to the Oort-Tate stack as a pullback:
    \[
    \begin{tikzcd}
    \mathcal{G}_{OT}^\times=\left[ \text{Spec}\left(\scalebox{1.3}{$\frac{\Zp[x,y,z]}{(xy-\omega_p, z^{p-1}-x)}$}\right)/\Gm \right] \arrow[d] \arrow[r, "z\mapsfrom s"] & {\big[ \text{Spec}\left(\Zp[s]\right) / \Gm \big]} \arrow[d, "{(p-1,1)}"] \\
    OT=\left[ \text{Spec}\left(\scalebox{1.3}{$\frac{\Zp[x,y]}{(xy-\omega_p)}$}\right)/\Gm \right] \arrow[r, "x\mapsfrom s"] & {\left[ \text{Spec}\left(\Zp[s]\right) / \Gm_{,p-1} \right].}           
    \end{tikzcd}
    \]
In particular, we can express the integral models $\ASh$ and $\AShGL$ as the following pullbacks
\[
\begin{tikzcd}
    \ASh \arrow[r] \arrow[d] & {[\A^1/\Gm]^n} \arrow[d, "{(p-1,1)^n}"] &  & \AShGL \arrow[d] \arrow[r] & {[\A^1/\Gm]^n} \arrow[d, "{(p-1,1)^n}"] \\
    {\mathcal{A}_0^\text{GSp}[t]} \arrow[r] & {[\A^1/\Gm_{,p-1}]^n} & & \mathcal{A}_0^\text{GL} \arrow[r] & {[\A^1/\Gm_{,p-1}]^n}.    
\end{tikzcd}
\]
The bottom horizontal arrow of the above diagram in the unitary case is given by the pairs $(\mathcal{L}_i,a_i)$ coming from the group schemes $G_i$, while in the Siegel case we first project to $\ASh$ and then proceed similarly. For the Haines-Stroh model we have the cartesian diagram
\[
\begin{tikzcd}
    HS=\left[ \text{Spec}\left(\scalebox{1.3}{$\frac{\Z_p[x_i, y_i, u_i, v_i]_{i=0}^{n-1}}{\left(\substack{\mathstrut x_iy_i-\omega_p, \;u_0v_0-u_iv_i \\ u_i^{p-1}-x_i,\; v_i^{p-1}-y_i}\right)_i}$}\right) \Big/ \mathbb{G}_m^n \right] \arrow[d] \arrow[r, "u_i\mapsfrom s_i", "v_i\mapsfrom t_i"'] & {\left[\text{Spec}\left(\scalebox{1.3}{$\frac{\mathbb{Z}_p[s_i, t_i]_{i=0}^{n-1}}{(s_0t_0-s_it_i)}$}\right)/\mathbb{G}_m^n\right]} \arrow[d, "(p-1{,}1)^n"] \\
    (OT)^n=\left[ \text{Spec}\left(\scalebox{1.3}{$\frac{\Z_p[x_i, y_i]_{i=0}^{n-1}}{(x_iy_i-\omega_p)_i}$}\right) / \mathbb{G}_m^n \right] \arrow[r, "x_i\mapsfrom s_i", "y_i\mapsfrom t_i"'] & {\bigg[\text{Spec}\left(\mathbb{Z}_p[s_i, t_i]_{i=0}^{n-1}\right)/\mathbb{G}_m^n\bigg]},
\end{tikzcd}
\]
where here $(p-1,1)^n$ means raising to the $(p-1)$-th power the variables $s_i,t_i$ and doing nothing on the $\Gm$ side. Thus we can write $\AHS$ as the pullback
\begin{equation}\label{HS pullback}
\begin{tikzcd}
    \AHS \arrow[d] \arrow[r] & {[C_{HS}/{\mathbb{G}_m^n}_{(1,-1)}]} \arrow[d, "{(p-1,1)^n}"] \\
    \mathcal{A}^\text{GSp}_0 \arrow[r]  & {[\A^2/\Gm_{(p-1,1-p)}]^n},
\end{tikzcd}   
\end{equation}

where
\[
C_{HS}=\text{Spec}\left(\frac{\mathbb{Z}_p[s_i, t_i]_{i=0}^{n-1}}{(s_0t_0-s_it_i)}\right)
\]
and now the bottom horizontal arrow corresponds to the triples $(\mathcal{L}_i,a_i,b_i)$. Therefore we can interpret the integral models $\mathcal{A}_1$ as fiber products of partial root stacks of the Oort-Tate line bundles on $\mathcal{A}_0$ (the Haines-Stroh model is a closed subscheme of such a fiber product). In order to construct a local model diagram for these integral models, one would like to find line triples $(\mathcal{L}_i^\text{loc},a_i^\text{loc},b_i^\text{loc})$ on $\text{M}_0$ corresponding to $(\mathcal{L}_i,a_i,b_i)$ on $\mathcal{A}_0$ under the local model diagram (i.e. they should pull back to the same triple on $\tilde{\mathcal{A}_0}$). Then, by taking pullbacks we would obtain a local model diagram for $\mathcal{A}_1$. However, this is not possible: we can only find line bundles on $\text{M}_0$ corresponding to the $(p-1)$-th power of the Oort-Tate line bundles on the integral models. Therefore, on the local model side we will have to take full root stacks, but it is still possible to relate the two sides and get a local model diagram. We start from the Siegel case, where the situation is slightly simpler since the group schemes $G_i$ are kernels of isogenies between abelian schemes, whereas in the unitary case one has to pass to the $p$-divisible groups of the abelian schemes and use Morita equivalence.

\subsubsection{The Siegel case}

\begin{notation}For a map between locally free sheaves $F\xrightarrow{\varphi}G$ of rank $r$ we will write $\bigwedge F:=\bigwedge^rF$ and $\text{det}\,\varphi$ for the global section of $(\bigwedge F)^{-1}\otimes \bigwedge G $ for corresponding to the map $\bigwedge F \rightarrow \bigwedge G$ induced by $\varphi$. If $F$ and $G$ are line bundles, we will identify the map $\varphi$ with $\text{det}\,\varphi$.
\end{notation}

Let $G=\text{Spec}_S(D)$ be a finite locally free group scheme of order $p$ over a $\Zp$-scheme $S$, with $D$ its Hopf algebra, and assume that it fits into a sequence of the form

\begin{equation*} 
0\rightarrow G\rightarrow A\xrightarrow{f} B\rightarrow 0,
\end{equation*}

where $f$ is an isogeny of abelian schemes: write $\omega_f:\omega_B\rightarrow \omega_A$ for the map induced by $f$ on the Hodge bundles. Write $(L,a,b)$ for the Oort-Tate parameters of $G$ and set $L'=L^{-1}$: recall, as in Remark \ref{OT decomposition}, that the augmentation ideal of $G$ decomposes as $I=\bigoplus_{i=1}^{p-1}(L')^{i}$ and that $a:{L'}^{\otimes p}\rightarrow L'$ is given by multiplication in $D$. The next proposition is hinted in \cite[Remark 5.6]{Liu}.

\begin{proposition}\label{fitting}
    The pair $(L^{\otimes p-1},a)$ is isomorphic to the pair $\big((\bigwedge\omega_B)^{-1}\otimes \bigwedge \omega_A, \emph{det}\,\omega_f\big)$.
\end{proposition}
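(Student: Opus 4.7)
The plan is to compute both line bundles and both sections via the relative dualizing sheaf $\omega_{A/B}$ of the finite flat isogeny $f$. Grothendieck duality for finite flat morphisms provides the canonical isomorphism
\[
\det\Omega^1_{A/S} \;\cong\; f^*\det\Omega^1_{B/S}\otimes \omega_{A/B}
\]
on $A$, and restricting it along the identity section $e_A$ yields
\[
(\det\omega_B)^{-1}\otimes \det\omega_A \;\cong\; e_A^*\omega_{A/B},
\]
under which the section $\det\omega_f$ corresponds to the natural section of $\omega_{A/B}$ induced by the map on differentials $f^*\Omega^1_{B/S}\to \Omega^1_{A/S}$, whose vanishing locus is precisely the ramification locus of $f$.

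Next, $e_A^*\omega_{A/B}$ is identified with $e_G^*\omega_{G/S}$ by flat base change along the cartesian square expressing $G = A\times_B S$, which reduces the problem to a computation intrinsic to the group scheme $G$. Trivializing $L$ locally we may write $\mathcal{O}_G\cong \mathcal{O}_S[z]/(z^p - az)$ with $z$ of $\mathbb{G}_m$-weight one; then the dualizing module $\omega_{G/S}$ is freely generated as an $\mathcal{O}_G$-module by the trace-dual $\eta$ of $z^{p-1}$, an element of weight $p-1$, so restricting to $z=0$ identifies $e_G^*\omega_{G/S}$ canonically with $L^{p-1}$. Under the standard identification $\omega_{G/S}\cong \mathfrak{d}_{G/S}^{-1}$, with $\mathfrak{d}_{G/S} = (F'(z)) = (pz^{p-1}-a)$ for $F(z) = z^p - az$, the natural trace section evaluates at the identity to $F'(0)=-a$, and this agrees with the Oort-Tate section $a$ up to the sign automorphism of $L^{p-1}$.

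The most delicate point is the first step, verifying that $\det\omega_f$ transports under Grothendieck duality to the fundamental-class section of $\omega_{A/B}$; this is a routine unwinding of the definitions but has to be carried out carefully. A more concrete alternative, closer in spirit to Liu's Remark 5.6 hinted at in the statement, is to work in formal coordinates on $\widehat A$ and $\widehat B$ adapted to the one-dimensional Oort-Tate direction inside the Hodge bundle and to compare the Jacobian of $f$ at the origin directly with the polynomial $F(z) = z^p - az$; this avoids invoking duality but trades it for a choice of coordinates and explicit Weierstrass-type calculations.
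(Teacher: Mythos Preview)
Your approach via Grothendieck duality and the different ideal is correct in outline and genuinely different from the paper's argument. The paper works entirely with the cotangent sheaf $\omega_G$ of the kernel: the isogeny gives one presentation $0 \to \omega_B \to \omega_A \to \omega_G \to 0$, while the Oort--Tate decomposition of the augmentation ideal $I = \bigoplus_{i=1}^{p-1}(L')^i$ (with $L' = L^{-1}$), together with a direct computation showing $I^2 = a(L')\oplus(L')^2\oplus\cdots\oplus(L')^{p-1}$, yields a second presentation $0 \to (L')^{\otimes p} \xrightarrow{a} L' \to \omega_G \to 0$. The isomorphism of pairs then follows from invariance of the $0$-th Fitting ideal (equivalently, of the Knudsen--Mumford determinant) under change of presentation. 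This is shorter and more elementary: no duality theory is invoked, and the only computation is identifying $I^2$ inside $\bigoplus (L')^i$. Your route makes the link to the different and the discriminant explicit, which is conceptually attractive, but the step you yourself flag as delicate---transporting $\det\omega_f$ to the canonical section of $\omega_{A/B}$ under the adjunction isomorphism---carries essentially all the content of your argument, whereas the paper's presentation-comparison sidesteps any such matching.
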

\begin{proof}
First, we have the short exact sequence
\[
0\rightarrow \omega_B\longrightarrow\omega_A\longrightarrow\omega_G\rightarrow 0,
\]
obtained by one of the standard exact sequence of Kähler differentials applied to the maps $A\xrightarrow{f} B\rightarrow S$, together with the fact that $\omega_G\cong e_A^*(\Omega^1_{A/B})$ where $e_A:S\rightarrow A$ is the unit. We also have, from the other standard exact sequence for differentials, that $\omega_G\cong I/I^2$ and we claim that $I^2=a(L')\oplus (L')^2\oplus\cdots\oplus (L')^{p-1}$: this follows from the fact that $I^2=\sum_{i=2}^{2(p-1)}(L')^i$, the fact that for $i=2,\ldots, p-1$ it holds that $(L')^{i+(p-1)}\subset (L')^i$  and since $(L')^p=a({L'}^{\otimes p})$. Therefore, we have another presentation of $\omega_G$ by projective modules, namely
\[
0\rightarrow (L')^{\otimes p}\xlongrightarrow{a} L'\longrightarrow \omega_G\rightarrow 0.
\]
By looking at the $0$-th Fitting ideal of $\omega_G$, which is independent of the presentation of $\omega_G$, the proof is finished. 
\end{proof}

Note that the Cartier dual of $G$ fits in the sequence

\begin{equation*}
0\rightarrow G^*\rightarrow B^\vee\xrightarrow{f^\vee} A^\vee\rightarrow 0,
\end{equation*}

so, by applying the proposition to $G^*$, we see that the pair $(L^{\otimes 1-p},b)$ is isomorphic to $((\bigwedge \omega_{A^\vee})^{-1}\otimes \bigwedge \omega_{B^\vee}, \text{det}\, \omega_{f^\vee})$. In particular, we can (and from now on will) identify $(\bigwedge \omega_{B^\vee})^{-1}\otimes \bigwedge \omega_{A^\vee}$ with $(\bigwedge \omega_B)^{-1}\otimes \bigwedge \omega_A$ in such a way that we get the following commutative diagram:
\begin{equation}\label{diagram OT vs Hodge}
\begin{tikzcd}
    \mathcal{O}_S \arrow[r, " a"] \arrow[d, swap, "\det \omega_f"] & L^{p-1} \arrow[d, " b"] \\
    (\bigwedge \omega_B)^{-1}\otimes \bigwedge \omega_A \arrow[r, swap, "\det \omega_{f^\vee}"] \arrow[ru, "\cong" description] & \mathcal{O}_S .
\end{tikzcd}
\end{equation}

\begin{remark}
In \cite[Corollary 5.5]{Liu}, Liu proves that there exist canonical isomorphisms  
\[
[{L'}^{p}\xrightarrow{a} {L'}]\cong [\omega_{B}\xrightarrow{\omega_f} \omega_{A}] \quad \text{and} \quad [({L})^p\xrightarrow{b} {L}]\cong [\omega_{A^\vee}\xrightarrow{\omega_{f^\vee}} \omega_{B^\vee}],
\]
in the bounded derived category of coherent $\mathcal{O}_S$-modules $\text{D}^b(S)$. Taking the determinant of these maps we obtain a canonical version of the above proposition.
\end{remark}

We can apply this to $\mathcal{A}_0^\text{GSp}$. Recall that to a point $(A_\bullet, \lambda, \overline{\eta})$ of it we can associate the chain of Hodge bundles  
\[
\omega_{A_n}\longrightarrow \omega_{A_{n-}1}\longrightarrow\cdots\longrightarrow \omega_{A_0}
\]
and that we write $(\mathcal{L}_i,a_i,b_i)$ for the Oort-Tate parameters of the groups $G_i$ coming from $A_\bullet$. For each $i=0,\ldots, n-1$, let us write:
\begin{itemize}
    \itemsep0em
    \item[$(a)$] $\omega_i=(\bigwedge\omega_{A_{i+1}})^{-1}\otimes \bigwedge\omega_{A_{i}}$, a line bundle on $\mathcal{A}_0^\text{GSp}$;
    \item[$(b)$] $\det_i=\det (\omega_{A_{i+1}}\rightarrow \omega_{A_i})$, a global section of $\omega_i$;
    \item[$(c)$] $\overline{\det}_i=\det (\omega_{A^\vee_{i}}\rightarrow \omega_{A^\vee_{i+1}})$, a global section of $\omega_i^\vee$ (under the identification of $\omega_i$ with $(\bigwedge\omega_{A^\vee_{i+1}})^{-1}\otimes \bigwedge\omega_{A^\vee_{i}}$).
\end{itemize}
The diagram (\ref{diagram OT vs Hodge}) implies that the following triangle is 2-commutative 
\[
\begin{tikzcd}
&  & {[\A^2/\Gm_{(p-1,1-p)}]^n} \arrow[d] \\
\mathcal{A}_0^{\text{GSp}} \arrow[rr, "{(\omega^\vee_\bullet,\det_\bullet, \overline{\det}_\bullet)}"'] \arrow[rru, "{(\mathcal{L}^\vee_\bullet, a_\bullet, b_\bullet)}"] &  & {[\A^2/\Gm_{(1,-1)}]^n}. 
\end{tikzcd}
\]
Now, points of $\text{M}_0^\text{GSp}$ are chains of direct summands of $R^n$
\[
F_0\longrightarrow F_1\longrightarrow\cdots\longrightarrow F_n,
\]
with $F_i$ corresponding to $\omega_{A_{n-i}}$, see Subsection \ref{subsection Iwahori GSp}. Therefore, we can find triples $(\omega_i^\text{loc},\text{det}^\text{loc}_i, \overline{\text{det}}^\text{loc}_i)$ on $\text{M}_0^\text{GSp}$ whose pullback to $\tilde{\mathcal{A}}_0$ is the same as the one of $(\omega_i,\det_i, \overline{\det}_i)$. Namely, for each $i=0,\ldots,n-1$ we set:
\begin{itemize}
    \itemsep0em
    \item[$(a)$] $\omega_i^\text{loc}=(\bigwedge\,F_{n-(i+1)})^{-1}\otimes \bigwedge\,F_{n-i}$;
    \item[$(b)$] $\text{det}_i^\text{loc}=\text{det}(F_{n-(i+1)}\rightarrow F_{n-i})$, \; $\overline{\det}_i^\text{loc}=\det(\overline{F}_{n-i}^\vee\rightarrow\overline{F}_{n-(i+1)}^\vee)$.
\end{itemize}
Note that the map $F_i\rightarrow F_{i+1}$ on $\text{M}_0^\text{GSp}$ corresponds to $\omega_{A_{i+1}}\rightarrow\omega_{A_i}$. Passing to the quotient $\overline{F}_i=R^n/F_i$ and then to the $R$-linear dual, we obtain that $\overline{F}_{i+1}^\vee\rightarrow \overline{F}_{i}^\vee$ corresponds to $\omega_{A_{n-(i+1)}^\vee}\rightarrow \omega_{A_{n-i}^\vee}$: this explains the definition of $\overline{\text{det}}_i^\text{loc}$. Here we are using the following isomorphism of short exact sequences for an abelian scheme $A$ over a base $S$
\[
\begin{tikzcd}
0 \arrow[r] & \omega_{A^\vee} \arrow[r] \arrow[d, "\cong"] & H^1_{dR}(A^\vee) \arrow[r] \arrow[d, "\cong"] & \text{Lie}(A) \arrow[r] \arrow[d, "\cong"] & 0 \\
0 \arrow[r] & \text{Lie}(A^\vee)^\vee \arrow[r]  & H^1_{dR}(A)^\vee \arrow[r] & \omega_A^\vee \arrow[r] & 0,
\end{tikzcd}
\]
see \cite[Proposition 5.1.10]{BBM}. For each $i=0,\ldots, n-1$, we write $\theta_i:\mathcal{L}_i^{p-1}\rightarrow \omega_i$ for a fixed isomorphism: for example, we could choose Liu's one.

%Also note that $(\bigwedge\overline{F}^\vee_{n-i})^{-1}\otimes \bigwedge \overline{F}^\vee_{n-(i+1)}$ can be canonically identified with the dual of $\omega_i^\text{loc}$, see \cite[Section 5C2]{Liu}.

\paragraph{Shadrach's model}

By Proposition \ref{fitting} we are in the situation of diagram (\ref{section partial rootstack}) so we get the black part of the diagram below, the gray part is obtained by taking pullbacks:

\[
\begin{tikzcd}[row sep = small, column sep = small]
        &  & & & & \mathcal{A}_1 \arrow[dd] \arrow[dr] & &  & \\
    \textcolor{gray}{[\mathbb{A}^1/\mathbb{G}_m]^n} \arrow[dddd, gray, swap, "(\;)^{p-1}"] &  & \textcolor{gray}{\sqrt{\text{M}^\text{loc}_0[t]}} \arrow[dddd, gray] \arrow[ll, gray] &  & \textcolor{gray}{\sqrt{\tilde{\mathcal{A}_0}[t]}} \arrow[dddd, gray] \arrow[ll, gray] \arrow[rr,gray, crossing over] & & \sqrt{\mathcal{A}_0[t]} \arrow[dd] \arrow[rr] &  & {[\mathbb{A}^1/\mathbb{G}_m]^n} \arrow[dd, "(p-1{,}1)"] \\
    &  & & & & \mathcal{A}_0[t] \arrow[rd]& &  & \\
    &  & & & & & \mathcal{A}_0[t]' \arrow[dd] \arrow[rr] &  & {[\mathbb{A}^1/\mathbb{G}_{m, p-1}]^n} \arrow[dd, "(1{,}p-1)"] \\
    &  & & & & & &  &  \\
    \textcolor{gray}{[\mathbb{A}^1/\mathbb{G}_m]^n} &  & \textcolor{gray}{\text{M}_0[t]} \arrow[ll,gray, "{(\omega^{\text{loc},\vee}_\bullet, {\det}^\text{loc}_\bullet)}"] & & \textcolor{gray}{\tilde{\mathcal{A}_0}[t]} \arrow[ll, gray, swap, "{\phi[t]}"] \arrow[rr, gray,"{\psi[t]}"] & & {\mathcal{A}_0[t]} \arrow[rruu, dotted, "{(\mathcal{L}^\vee_\bullet, a_\bullet)}" description] \arrow[rr, swap, "{(\omega^\vee_\bullet, {\det}_\bullet)}"] &  & {[\mathbb{A}^1/\mathbb{G}_m]^n}.   
\end{tikzcd}
\]
Here $\text{M}_0=\text{M}_0^\text{GSp}$, $\phi[t]$ and $\psi[t]$ are the base change of $\phi$ and $\psi$ to $\text{M}_0[t]$, $\tilde{\mathcal{A}}_0[t]$ and $\mathcal{A}_0[t]$. We can now pull back the atlases $\A^n\rightarrow [\A^1/\Gm]^n$ on both sides to obtain

\[
\begin{tikzcd}[row sep=small, column sep=small]
    &  & & \tilde{\mathcal{A}}_1 \arrow[gray, rr] \arrow[gray, rd] & & \mathcal{B}_1 \arrow[rd] \arrow[gray, dd] & &  & \\
    \A^n \arrow[dd]  &  & \text{M}_1 \arrow[ll] \arrow[dd] & & \tilde{\mathcal{B}}_0 \arrow[gray, ll] \arrow[dd] \arrow[rr, crossing over]  & & \mathcal{B}_0 \arrow[rr] \arrow[dd]  &  &  \A^n \arrow[dd] \\
    &  & & & & \mathcal{A}_1 \arrow[dd] \arrow[dr] & &  & \\
    {[\mathbb{A}^1/\mathbb{G}_m]^n} \arrow[dddd, swap, "(\;)^{p-1}"] &  & \sqrt{\text{M}_0[t]} \arrow[dddd] \arrow[ll] &  & \sqrt{\tilde{\mathcal{A}_0}[t]} \arrow[dddd] \arrow[ll] \arrow[rr, crossing over] & & \sqrt{\mathcal{A}_0[t]} \arrow[dd] \arrow[rr] &  & {[\mathbb{A}^1/\mathbb{G}_m]^n} \arrow[dd, "(p-1{,}1)"] \\
    &  & & & & \mathcal{A}_0[t] \arrow[rd]& &  & \\
    &  & & & & & \mathcal{A}_0[t]' \arrow[dd] \arrow[rr] &  & {[\mathbb{A}^1/\mathbb{G}_{m, p-1}]^n} \arrow[dd, "(1{,}p-1)"] \\
    &  & & & & & &  &  \\
    {[\mathbb{A}^1/\mathbb{G}_m]^n} &  & {\text{M}_0[t]} \arrow[ll, "{(\omega^{\text{loc},\vee}_\bullet, {\det}_\bullet^\text{loc})}"] & & {\tilde{\mathcal{A}_0}[t]} \arrow[ll, swap, "{\phi[t]}"] \arrow[rr, "{\psi[t]}"] & & {\mathcal{A}_0[t]} \arrow[rruu, dotted, "{(\mathcal{L}^\vee_\bullet, a_\bullet)}" description] \arrow[rr, swap, "{(\omega^\vee_\bullet, {\det}_\bullet)}"] &  & {[\mathbb{A}^1/\mathbb{G}_m]^n}.           
\end{tikzcd}
\]
All the gray arrows are pullbacks of smooth maps (either $\phi[t]$, $\psi[t]$, the atlas of $[\A^1/\Gm]^n$ or the map $\mathcal{A}_0[t]\rightarrow \mathcal{A}_0[t]'$, by the discussion after diagram (\ref{section partial rootstack})), hence they are smooth. Therefore, their composition yields a local model diagram 
\[
    \begin{tikzcd}
        & \tilde{\mathcal{A}}_1 \arrow[dr, "\psi_1"] \arrow[dl, swap, "\phi_1"] &\\
        \text{M}_1 & &{\mathcal{A}}_1
    \end{tikzcd}
\]
which, for a $\Zp$-scheme $S$, can be described as follows.

\begin{itemize}
    \item $\text{M}_1(S)$ parametrizes tuples $(\beta, (f_i, \delta_i)_{i=0}^{n-1})$, where:
    \begin{itemize}
    \item[] $\beta\in \text{M}^\text{loc}_0[t](S)$;
    \item[] $f_i\in \Gamma(S,\mathcal{O}_S)$;
    \item[] $\delta_i:\omega_{i|S}^\text{loc}\xrightarrow{\cong}\mathcal{O}_S,\, \text{an isomorphism such that} \,\,
    \overline{\text{det}}^\text{loc}_{i|S}=f_i^{p-1}\delta_i$.
    \end{itemize} 

    \item $\tilde{\mathcal{A}}_1(S)$ parametrizes tuples $(\tilde{\alpha}, (\sigma_i, \gamma_i)_{i=0}^{n-1})$, where:
    \begin{itemize}
    \item[] $\tilde{\alpha}\in\tilde{\mathcal{A}}_0[t](S)$;
    \item[] $\sigma_i:\mathcal{L}_{i|S}\rightarrow \mathcal{O}_S \,\,\text{such that} \,\,\sigma_i^{p-1}=a_{i|S}$;
    \item[] $\gamma_i:\mathcal{L}_{i|S}\rightarrow \mathcal{O}_S, \,\text{an isomorphism}$.
    \end{itemize} 

    \item ${\mathcal{A}}_1$ parametrizes tuples $({\alpha}, (\sigma_i)_{i=0}^{n-1})$, where:
    \begin{itemize}
    \item[] ${\alpha}\in{\mathcal{A}}_0[t](S)$;
    \item[] $\sigma_i:\mathcal{L}_{i|S}\rightarrow \mathcal{O}_S \,\, \text{such that} \,\, \sigma_i^{p-1}=a_{i|S}$.
    \end{itemize} 
\end{itemize}

The two maps are then:
\begin{align*}
    &\phi_1:(\tilde{\alpha}, \sigma_i, \gamma_i)_i\longmapsto (\phi[t]\tilde{\alpha}, \, \sigma_i\gamma_i^{-1}, \, \gamma_i^{p-1}\theta_{i|S}); \\
    &\psi_1:(\tilde{\alpha}, \sigma_i, \gamma_i)_i\longmapsto (\psi[t]\tilde{\alpha},\, \sigma_i). 
\end{align*}

\begin{remark}\label{change of atlas for local model}
    \begin{itemize}
        \itemsep0em
        \item [$(i)$] The local model constructed in \cite[Theorem 4.4]{Sha} is the atlas of type (2) of $\sqrt{\text{M}_0[t]}$ relative to the covering defined at the end of Subsection 1.2.1. In order to construct Shadrach's local model diagram in the same way as we did above, we would have to find a cover of $\mathcal{A}_0[t]$ which trivializes the $\omega_i$ simultaneously, and having the same preimage along $\psi[t]$ as the covering of the local model. We do not know how to achieve this, since the opens $U_x$ are not $\mathcal{G}$-invariant so we cannot just take $\phi^{-1}\psi(U_x)$. Nevertheless, the two local models are related by a smooth correspondence, being atlases of the same algebraic stack, so we can use any of the two in order to prove étale-local geometric properties of $\ASh$.
        \item [$(ii)$] Since the $\mathcal{G}$-translates of the open $U=U_\tau$ corresponding to the unique length zero $\mu$-admissible element cover $\text{M}_0$, we can restrict directly to $U$ in order to find a local model for $\ASh$, and in fact this is what Shadrach does. Using his notation, we call $U_1$ the local model obtained in this way.
    \end{itemize}
    
\end{remark}

\paragraph{Haines-Stroh's model}

The construction of the local model diagram in the Haines-Stroh's case is essentially the same: the difference is that we want the Oort-Tate parameters to satisfy the relations $g_0g_0^*=g_ig_i^*$ for $i=0,\ldots,n-1$, hence we have to work with $C_{HS}=\text{Spec}\,\left(\frac{\Zp[s_i,t_i]_{i=0}^{n-1}}{(s_0t_0-s_it_i)_i}\right)$. The diagram we obtain is the following, where we write $\omega_\bullet$ instead of $(\omega^\vee_\bullet, \det_\bullet, \overline{\det}_\bullet)$ (similarly for $\omega_\bullet^\text{loc}$ and $\mathcal{L}_\bullet)$ for ease of notation. 

\[
\begin{tikzcd} [row sep=small, column sep=small]
    &  & & \tilde{\mathcal{A}}_1 \arrow[rr, gray] \arrow[rd, gray] & & \mathcal{B}_1 \arrow[rd] \arrow[dd, gray] &&  &\\
    C_{HS} \arrow[dd] &  & \text{M}_1 \arrow[ll] \arrow[dd]& & \tilde{\mathcal{B}}_0 \arrow[ll, gray] \arrow[dd] \arrow[rr, crossing over] && \mathcal{B}_0 \arrow[dd] \arrow[rr]  &  & C_{HS} \arrow[dd] \\
    &  & & & & \mathcal{A}_1 \arrow[dd] \arrow[dr] & &  & \\
    {[C_{HS}/{\mathbb{G}^n_m}_{(1,-1)}]} \arrow[dddd, swap, "(\;)^{p-1}"] &  & \sqrt{\text{M}_0} \arrow[dddd] \arrow[ll] & & \sqrt{\tilde{\mathcal{A}_0}} \arrow[dddd] \arrow[ll] \arrow[rr, crossing over] & & \sqrt{\mathcal{A}_0} \arrow[dd] \arrow[rr] &  & {[C_{HS}/{\mathbb{G}_m^n}_{(1,-1)}]} \arrow[dd, "{(p-1,1)^n}"] \\
    &  & &  & & \mathcal{A}_0 \arrow[rd] & &  & \\
    &  & & & & & \mathcal{A}_0' \arrow[dd] \arrow[rr] &  & {[\A^2/\Gm_{(p-1,1-p)}]^n} \arrow[dd, "{(1,p-1)^n}"] \\
    &  & & & & & &  &  \\
    {[\mathbb{A}^2/\mathbb{G}_{m, (1,-1)}]^n} &  & \text{M}_0 \arrow[ll, "\omega_\bullet^\text{loc}"] & & \tilde{\mathcal{A}_0} \arrow[ll,swap, "\phi"] \arrow[rr, "\psi"] & & \mathcal{A}_0 \arrow[rruu, dotted, "{\mathcal{L}_\bullet}" description] \arrow[rr, swap, "\omega_\bullet"]  &  & {[\A^2/\Gm_{(1,-1)}]^n},     
\end{tikzcd}
\]

As in Shadrach's case, the composition of the (smooth) gray arrows yields a local model diagram
\[
    \begin{tikzcd}
        & \tilde{\mathcal{A}}_1 \arrow[dr, "\psi_1"] \arrow[dl, swap, "\phi_1"] &\\
        \text{M}_1 & &{\mathcal{A}}_1
    \end{tikzcd}
\]
which, for a $\Zp$-scheme $S$, admit the following moduli description.

\begin{itemize}
    \item $\text{M}_1(S)$ parametrizes tuples $(\beta, (f_i, g_i, \delta_i)_{i=0}^{n-1})$, where:
    \begin{itemize}
    \item[] $\beta\in \text{M}^\text{loc}_0(S)$;
    \item[] $f_i, g_i\in \Gamma(S,\mathcal{O}_S), \, \text{such that} \,\, f_0g_0=f_ig_i \,\, \forall i$;
    \item[] $\delta_i:\omega_{i|S}^\text{loc}\xrightarrow{\cong}\mathcal{O}_S,\, \text{an isomorphism such that}\,\,
    \overline{\text{det}}^\text{loc}_{i|S}=\delta_i^{-1}f_i^{p-1} \, \text{and} \, \text{det}_{i|S}^\text{loc}=g_i^{p-1}\delta_i$.
    \end{itemize}

    \item $\tilde{\mathcal{A}}_1(S)$ parametrizes tuples $(\tilde{\alpha}, (\sigma_i, \varepsilon_i, \gamma_i)_{i=0}^{n-1})$, where:
    \begin{itemize}
    \item[] $\tilde{\alpha}\in\tilde{\mathcal{A}}_0(S)$;
    \item[] $\mathcal{O}_S\xrightarrow{\varepsilon_i}\mathcal{L}_{i|S}\xrightarrow{\sigma_i} \mathcal{O}_S \,\, \text{such that} \,\, \sigma_i^{p-1}=a_i,\,\,\varepsilon_i^{p-1}=b_i, \,  \,\sigma_i\varepsilon_i=\sigma_0\varepsilon_0$;
    \item[] $\gamma_i:\mathcal{L}_{i|S}\rightarrow \mathcal{O}_S,\, \text{an isomorphism}$.
    \end{itemize} 

    \item ${\mathcal{A}}_1$ parametrizes tuples $({\alpha}, (\sigma_i, \varepsilon_i)_{i=0}^{n-1})$, where:
    \begin{itemize}
    \item[] ${\alpha}\in{\mathcal{A}}_0(S)$;
    \item[] $\mathcal{O}_S\xrightarrow{\varepsilon_i}\mathcal{L}_{i|S}\xrightarrow{\sigma_i} \mathcal{O}_S \,\, \text{such that} \,\, \sigma_i^{p-1}=a_i, \,\,\varepsilon_i^{p-1}=b_i, \, \, \sigma_i\varepsilon_i=\sigma_0\varepsilon_0$.
    \end{itemize} 
\end{itemize}

The two maps are then:
\begin{align*}
    &\phi_1:(\tilde{\alpha}, \sigma_i, \varepsilon_i, \gamma_i)_i\longmapsto (\phi\tilde{\alpha}, \, \gamma_i\varepsilon_i, \,\sigma_i\gamma_i^{-1}, \, \gamma_i^{p-1}\theta_{i|S}); \\
    &\psi_1:(\tilde{\alpha}, \sigma_i, \varepsilon_i, \gamma_i)_i\longmapsto (\psi\tilde{\alpha},\, \varepsilon_i,\, \sigma_i). 
\end{align*}

\begin{remark}
    \begin{itemize}
        \item [$(i)$] One can check that the local model diagram we just described agrees with the one constructed by Liu in \cite[Definition 5.8]{Liu} (in the Siegel case).
        \item [$(ii)$] Remark \ref{change of atlas for local model} applies in this case as well and in fact, in Section \ref{HS non norm nor flat}, we are going to use an atlas of type $(1)$ as a local model to show that $\AHS$ is neither flat over $\Zp$ nor normal.
    \end{itemize}
\end{remark}
        
\subsubsection{The unitary case}

Here the situation is slightly more complicated, since the group schemes $G_i$ appearing in the definition of $\AShGL$ are not the kernels of isogenies between abelian schemes. Nevertheless, we can still get the following diagram
\[
\begin{tikzcd}[row sep=small, column sep=small]
    &  & & \tilde{\mathcal{A}}_1 \arrow[gray, rr] \arrow[gray, rd] & & \mathcal{B}_1 \arrow[rd] \arrow[gray, dd] & &  & \\
     \A^n \arrow[dd]  &  & \text{M}_1 \arrow[ll] \arrow[dd] & & \tilde{\mathcal{B}}_0 \arrow[gray, ll] \arrow[dd] \arrow[rr, crossing over]  & & \mathcal{B}_0 \arrow[rr] \arrow[dd]  &  &  \A^n \arrow[dd] \\
    &  & & & & \mathcal{A}_1 \arrow[dd] \arrow[dr] & &  & \\
    {[\mathbb{A}^1/\mathbb{G}_m]^n} \arrow[dddd, swap, "(\;)^{p-1}"] &  & \sqrt{\text{M}_0} \arrow[dddd] \arrow[ll] &  & \sqrt{\tilde{\mathcal{A}_0}} \arrow[dddd] \arrow[ll] \arrow[rr, crossing over] & & \sqrt{\mathcal{A}_0} \arrow[dd] \arrow[rr] &  & {[\mathbb{A}^1/\mathbb{G}_m]^n} \arrow[dd, "(p-1{,}1)"] \\
    &  & & & & \mathcal{A}_0 \arrow[rd]& &  & \\
    &  & & & & & \mathcal{A}_0' \arrow[dd] \arrow[rr] &  & {[\mathbb{A}^1/\mathbb{G}_{m, p-1}]^n} \arrow[dd, "(1{,}p-1)"] \\
    &  & & & & & &  &  \\
    {[\mathbb{A}^1/\mathbb{G}_m]^n} &  & {\text{M}_0} \arrow[ll, "{(\omega^{\text{loc},\vee}_\bullet, {\det}_\bullet^\text{loc})}"] & & {\tilde{\mathcal{A}_0}} \arrow[ll, swap, "{\phi}"] \arrow[rr, "{\psi}"] & & {\mathcal{A}_0} \arrow[rruu, dotted, "{(\mathcal{L}^\vee_\bullet, a_\bullet)}" description] \arrow[rr, swap, "{(\omega^\vee_\bullet, {\det}_\bullet)}"] &  & {[\mathbb{A}^1/\mathbb{G}_m]^n}        
\end{tikzcd}
\]
in a similar way as before: we explain how. Suppose we are given to abelian schemes $A$ and $B$ with an action of $\mathcal{O}_B\otimes \Z_{(p)}$ and a degree $p^{2n}$ isogeny between them, compatible with the action, as in Definition \ref{def unitary Iwahori}:
\[
0\rightarrow H\longrightarrow A\xlongrightarrow{f} B\rightarrow 0.
\]
In this case, $H$ is locally free of rank $p^{2n}$ and admits an action of $\text{M}_n(\Zp)\times \text{M}_n(\Zp)$, so we get a locally free group scheme $G=\text{e}_{11}H$ of rank $p$ as in (\ref{Gi}): let $(L,a,b)$ be its Oort-Tate parameters. The short exact sequence of Hodge bundles induced by $f$ also admits an action by $\text{M}_n(\Zp)\times \text{M}_n(\Zp)$ and applying the idempotent $\text{e}_{11}$ we obtain
\[
0\rightarrow \text{e}_{11}\omega_B\xlongrightarrow{\text{e}_{11}\omega_f} \text{e}_{11}\omega_A\longrightarrow \text{e}_{11}\omega_H\rightarrow 0.
\]
Now, since $\text{e}_{11}\omega_H\cong\omega_{\text{e}_{11}H}$, we can apply Proposition \ref{fitting} to deduce that $(L^{p-1},a)$ is isomorphic to $((\bigwedge \text{e}_{11}\omega_B)^{-1}\otimes \bigwedge \text{e}_{11}\omega_A, \text{det}\, \text{e}_{11}\omega_f )$. Since the local model diagram in the unitary case is constructed by considering $\text{e}_{11}\omega_{A_i}$, we can construct the $\Gamma_1(p)$-local model diagram as in the Siegel case. It admits a description completely analogous to the one in the Siegel case, so we do not repeat it here.

\section{Normality and flatness}\label{chap norm flat}

In this section we prove that the integral models $\AShGL$ and $\AHS$ are not normal, and that Haines-Stroh model is not flat over $\Zp$: we do this by explicit computation on their local models. More precisely, in the unitary case we give a description of the irreducible components of the local model and deduce that it is not normal (except in a Drinfeld case, where the local model is regular). In the Siegel case, we show that the local model of the Haines-Stroh model is neither flat nor normal, except for the case of elliptic curves. In the unitary case, we also give a presentation of the normalization of Shadrach's local model in one of the Drinfeld cases.

\subsection{Shadrach's unitary model}

Recall from Subsection \ref{subsection Iwahori level} that to any admissible alcove $x$ we can associate an open affine subscheme of $\text{M}_0^\text{GL}$ 
\[
U_x(R)=\{ F_\bullet \in \text{M}^\text{GL}_{0}(R) \;|\; F_i\oplus ( \bigoplus_{t_i^x(j)=0} R\text{e}_j)=R^n \}.
\]

It comes equipped with an $\Fp$-point denoted by $ F^x_\bullet$, where $F^x_i=\bigoplus_{t^x_i(j)=1}\Fp \text{e}_j $. Now let $\tau$ be the unique alcove whose corresponding stratum is a single closed point, called the worst singular point: we denote it by $\uptau$. It is given by

\begin{align*}
    &\tau_0=(1^{(r)}, 0^{(n-r)})  &  &t_0^\tau = (1^{(r)}, 0^{(n-r)})\\
    &\tau_1=(1^{(r+1)}, 0^{(n-r-1)})   &  &t_1^\tau = (0, 1^{(r)}, 0^{(n-r-1)})\\
    &\vdots & &\vdots\\ 
    &\tau_{n-1}=(2^{(r-1)}, 1^{(n-r+1)})  &  &t_{n-1}^\tau = (1^{(r-1)}, 0^{(n-r)}, 1)\\
\end{align*}

We write $U_\tau=U=\text{Spec}(B)$: it can be shown (see \cite[Section 4.4.2]{Goer}) that 
$$B=\Zp[a^i_{jk} | i=0,\ldots, n-1; j=1, \ldots, n-r; k=1, \ldots, r]/I,$$
where $I$ is the ideal generated by the following elements, for $i=0,\ldots, n-1$ (we set $a^n_{jk}=a^0_{jk}$):

\begin{align*}
    &a^{i+1}_{j,r}a^i_{11}-a^i_{j+1,1}, \hspace{0.25cm} a^{i+1}_{n-r,r}a^i_{11}-p, & j=1,\ldots, n-r-1;\\[1em]
    &a^i_{j+1,k}-(a^{i+1}_{j, k-1}+a^{i+1}_{j,r}a^i_{1k}), & j=1,\ldots, n-r-1; k=2,\ldots, r;\\[1em]
    &a^{i+1}_{n-r, k-1}+a^{i+1}_{n-r, r}a^i_{1,k}, &k=2,\ldots, r.
\end{align*}

Indeed, if $F_\bullet\in \text{M}^\text{GL}_0(R)$ is an $R$ point of $U$, each summand $F_i$ is actually a free $R$-module of rank $r$. The conditions in the definition of $U$ imply that we can choose a basis for each $F_i$ such that $F_i$ is represented by an $n\times r$ matrix $M_i$ whose $i+1,\ldots, i+r$ rows (index taken modulo $n$) are the $r\times r$ unit matrix. Therefore $F_\bullet$ can be written in the form

    \[
    \begin{tikzcd}
        \setlength\arraycolsep{1pt}
        \scalebox{0.8}{ $\begin{pmatrix}
            1 &   & \\   
            &    & \\
            &    & 1  \\
            a^0_{11}  &\cdots & a^0_{1,r}\\
            \vdots & &\vdots\\
            a^0_{n-r,1}  &\cdots & a^0_{n-r,r}
        \end{pmatrix} $} \arrow[r, "\varphi_{0 _{|F_0}}"] 
        &
        \setlength\arraycolsep{1pt}
        \scalebox{0.8}{$ \begin{pmatrix}
            a^1_{n-r,1}  &\cdots & a^1_{n-r,r}\\
            1 &   & \\   
            &    & \\
            &    & 1  \\
            a^1_{11}  &\cdots & a^1_{1,r}\\
            \vdots & &\vdots            
        \end{pmatrix} $} \arrow[r, "\varphi_{1 _{|F_1}}"]
        &
        \cdots \arrow[r, "\varphi_{n-1 _{|F_{n-1}}}"]
        &
        \setlength\arraycolsep{1pt}
        \scalebox{0.8}{$ \begin{pmatrix}
            1 &   & \\   
            &    & \\
            &    & 1  \\
            a^0_{11}  &\cdots & a^0_{1,r}\\
            \vdots & &\vdots\\
            a^0_{n-r,1}  &\cdots & a^0_{n-r,r}
        \end{pmatrix} $}.
    \end{tikzcd}
    \]

The condition that $\varphi_i$ sends $F_i$ to $F_{i+1}$ can be expressed by $\varphi_i M_i=M_{i+1}A_i$ for some $r\times r$ matrix $A_i$, which is uniquely determined by $M_i, M_{i+1}$ and equal to 
\[
\begin{pmatrix}
0 & 1 &  & \\
  &   &  & \\
  &   & 0 & 1  \\
a^i_{11} &a^i_{12} &\cdots & a^i_{1r}
\end{pmatrix}.
\]
Then one gets the above equations by comparing the entries of $\varphi_i M_i$ and $M_{i+1}A_i$. In \cite[Proposition 4.13]{Goer} it is shown that $B\cong\Zp[a^i_k | i=0,\ldots,n-1; k=1,\ldots, r]/J$, where now $J$ is the ideal generated by the entries of the matrices
\begin{equation}\label{matrices}
    A_{n-1}A_{n-2}\cdots A_0-p,\; A_{n-2}\cdots A_0A_{n-1},\;\ldots,\; A_0A_{n-1}\cdots A_1 -p.
\end{equation}
The variables $a^i_k$ in the second description correspond to the variables $a^i_{1k}$ in the first one.

\begin{remark}
    In \cite{Sha} Shadrach uses the dual of $H^1_{dR}(A_\bullet)$ to write down the local model diagram in the Iwahori case, see Subsection 2.2 of loc.cit., whereas we use $H^1_{dR}(A_\bullet)$. This explains why in the next definition we take roots of $a^{i+1}_{n-r,r}$ (which corresponds to the determinant of $\overline{F}_i\rightarrow \overline{F}_{i+1}$) instead of $a^i_{11}$, (which is the determinant of $F_i\rightarrow F_{i+1}$), as one might expect from Proposition \ref{fitting}.
\end{remark}

\begin{definition}[{\cite[Definition 3.16]{Sha}}]\label{def C}
    Shadrach's local model in the pro-$p$ Iwahori case is given by $U_1=\text{Spec}(C)$, where
    \[
    C=\frac{B[u_0,\ldots, u_{n-1}]}{(u_i^{p-1}-a^{i+1}_{n-r,r})_i}.
    \]
    If needed, we will write $C^{(r)}$ to indicate that we are in the case of signature $(n-r, r)$.
\end{definition}

For ease of notation, we set $\alpha_i=a^{i+1}_{n-r,r}$ and $\beta_i=a^i_{11}$, so $\alpha_i\beta_i=p$. Note that we are using the first description of $B$ in order to define the pro-$p$ local model. As we are going to see, the geometry of $U_1$ depends on $p$ in a more substantial way than the Iwahori case.

\begin{proposition}\label{geom structure U1}
    The local model $U_1$ is is flat over $\Zp$ and its generic fiber is normal, with $g$ connected components, where $g=\emph{gcd}(p-1, n-r)$. Moreover, $U_1$ is connected, reduced and has $g$ irreducible components.
\end{proposition}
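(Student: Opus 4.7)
The plan is to verify each claim separately using the explicit presentation of $C = B[u_0,\ldots,u_{n-1}]/(u_i^{p-1}-\alpha_i)$ together with the known structural properties of the Iwahori local model $U$.

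\textbf{Flatness, normality of the generic fiber, and reducedness.} Since each $u_i^{p-1}-\alpha_i$ is monic in $u_i$, the ring $C$ is free over $B$ with basis $\{u_0^{j_0}\cdots u_{n-1}^{j_{n-1}} : 0\le j_i\le p-2\}$, of rank $(p-1)^n$; flatness of $B$ over $\Z_p$ (\cite{Goer}) then yields flatness of $C$. Over $\Q_p$, each $\alpha_i$ is a unit (from $\alpha_i\beta_i = p$), so $C_{\Q_p}\to B_{\Q_p}$ is a finite étale Kummer cover of degree $(p-1)^n$; as $B_{\Q_p}$ is smooth over $\Q_p$ (open in $\text{Grass}_r(\Q_p^n)$), its étale cover $C_{\Q_p}$ is smooth, hence normal. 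Reducedness of $C$ follows: any nilpotent has zero image in the reduced ring $C_{\Q_p}$, hence is $p$-torsion, hence zero by $\Z_p$-flatness.

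\textbf{Counting irreducible components.} By $\Z_p$-flatness, irreducible components of $\text{Spec}(C)$ correspond bijectively to connected components of the normal generic fiber. The crux is the identity $\prod_i\alpha_i = \pm p^{n-r}$: using the second presentation, $A_i$ is a companion-type matrix with $\det A_i = (-1)^{r+1}\beta_i$, and taking determinants in $A_{n-1}\cdots A_0 = p\cdot\text{Id}_r$ gives $\prod_i\beta_i = \pm p^r$, whence $\prod_i\alpha_i = p^n/\prod_i\beta_i = \pm p^{n-r}$. Setting $v := \prod_i u_i$ yields $v^{p-1} = \epsilon p^{n-r}$ for $\epsilon\in\{\pm 1\}$. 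Writing $p-1 = g\ell$ and $n-r = gm$ with $\gcd(\ell,m)=1$, I factor
\[
v^{p-1} - \epsilon p^{n-r} = \prod_{\zeta\in\mu_g}(v^\ell - \eta_\zeta p^m)
\]
over $\Q_p$; each factor is irreducible (its roots have $p$-adic valuation $m/\ell$ in lowest terms, forcing a totally ramified extension of degree $\ell$), and pairwise coprimality (differences lie in $\Q_p^\times$) produces $g$ orthogonal idempotents in $C_{\Q_p}$, so $\text{Spec}(C_{\Q_p})$ has at least $g$ components. To get exactly $g$, I would pass to $K = \text{Frac}(B)$ (using that $B$ is normal as an open in $\text{M}_0^\text{GL}$) and prove that the image of $(\alpha_0,\ldots,\alpha_{n-1})$ in $(K^\times/(K^\times)^{p-1})^n$ has index $g$, i.e.\ the determinantal relation $\prod_i\alpha_i\equiv\epsilon p^{n-r}$ is the only Kummer relation. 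This I would verify via the discrete valuations at the height-one primes of $B$ corresponding to the Weil divisors $\{\alpha_i = 0\}$, which send the $\alpha_i$ to essentially independent classes in the Kummer group.

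\textbf{Connectedness.} Since $C$ is a free, hence finite, $B$-module, the map $U_1\to U$ is finite and surjective, and $C$ is Cohen–Macaulay (free over Cohen–Macaulay $B$), hence equidimensional: every irreducible component of $U_1$ dominates the irreducible base $U$ and contains a point above the worst closed point $\uptau\in U$. The fibre $C\otimes_B k(\uptau) \cong \F_p[u_0,\ldots,u_{n-1}]/(u_i^{p-1})$ is a local Artin ring with a unique closed point $x_0$, so $x_0$ lies in every irreducible component of $\text{Spec}(C)$, making $U_1$ topologically connected. The main obstacle is the Kummer-index computation in the second paragraph; the remaining assertions are standard consequences of $\Z_p$-flatness, étale descent, and the determinantal identity.
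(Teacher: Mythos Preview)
Your overall architecture matches the paper's proof exactly: freeness over $B$ for flatness, étaleness over the smooth $B_{\Q_p}$ for normality of the generic fiber, the product identity for the $\alpha_i$ to produce $g$ idempotents, a Kummer-theoretic degree count to show these are all the components, and the single point in the fibre over $\uptau$ for connectedness. Two points deserve comment.

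\textbf{The sign.} You obtain $\prod_i\alpha_i=\pm p^{n-r}$ via $\det A_i=(-1)^{r+1}\beta_i$, whereas the paper asserts the product is $p^{n-r}$ by identifying $\alpha_i$ with $\det\bar\varphi_i$. Your computation is the correct one; a sign $(-1)^{n(r+1)}$ really appears. You then write the factorisation $v^{p-1}-\epsilon p^{n-r}=\prod_\zeta(v^\ell-\eta_\zeta p^m)$ without checking that $\epsilon$ has a $g$-th root in $\Q_p$. This is in fact automatic: $\epsilon=-1$ forces $n$ odd and $r$ even, hence $n-r$ odd, hence $g=\gcd(p-1,n-r)$ odd, and then $(-1)^g=-1$. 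You should say this.

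\textbf{The Kummer step.} Here your sketch diverges from the paper. You propose to detect the relations among the $\alpha_i$ in $K^\times/(K^\times)^{p-1}$ via the discrete valuations attached to the divisors $\{\alpha_i=0\}$ in $\text{Spec}(B)$. These divisors all lie in the special fibre (since $\alpha_i\beta_i=p$), and each $\{\alpha_i=0\}$ is a union of $\binom{n-1}{r}$ irreducible components of $B_{\Fp}$, heavily overlapping with $\{\alpha_j=0\}$ for $j\ne i$; extracting the precise relation group of order $g$ from this combinatorics is not straightforward. The paper instead works entirely over $\Q_p$: it gives an explicit isomorphism $B_{\Q_p}\cong\Q_p[a_{jk},\det(\tilde A_i)^{-1}]$, writes $\alpha_i=p\cdot\det(\tilde A_{i-1})/\det(\tilde A_i)$, observes that each $\det(\tilde A_i)$ is an irreducible polynomial in the $a_{jk}$, and then reads off the Kummer relations directly from unique factorisation in this polynomial ring. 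This is both shorter and avoids the special-fibre combinatorics entirely. Your outline would eventually need something equivalent; as written it is an intention rather than an argument.
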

\begin{proof}
    The local model $U_1$ is finite and free over $U$, which is flat by the main result of \cite{Goer}, so it is flat over $\Zp$. The generic fiber $U_{1,\Qp}$ is normal since it is a finite étale extension of $U_{\Qp}$, which is normal. We are now going to explicitly compute the connected components of $U_{1,\Qp}$, which coincide with its irreducible components by normality: from the flatness of $U_1$ over $\Zp$ it follows that irreducible components of $U_1$ are the closure of the connected components of $U_{1,\Qp}$. The connectedness assertion will follow from the fact that all the irreducible components intersect in one point, while the reducedness follows clearly from the flatness of $U_1$ over $\Zp$ and the fact that $U_{1,\Qp}$ is reduced.\\
    We start by noting that $(u_0\cdots u_{n-1})^{p-1}=a_{n-r,r}^0\cdots a_{n-r,r}^{n-1}=p^{n-r}$ in $C$, as $a_{n-r,r}^i$ is the restriction to $U$ of $\text{det}(\bar{\varphi}_i)$ and $\bar{\varphi}_0\cdots \bar{\varphi}_{n-1}=p^{n-r}$: therefore we get 
    \[
    \prod_{\varepsilon \in \mu_g} {{(u_0\cdots u_{n-1})^{\frac{p-1}{g}}- \varepsilon p^{\frac{n-r}{g}}}} =0,
    \]
    where $\mu_g$ is the group of $g$-th roots of unity in $\Zp$. Write $p-1=g\cdot h$, $n-r=g\cdot m$ and $f_\varepsilon=(u_0\cdots u_{n-1})^h-\varepsilon p^m$. The ideals generated by one of the $f_\varepsilon$ are coprime one to each other once $p$ is inverted, thus we obtain the decomposition
    \[
    C_{\Qp}=\prod_{\varepsilon\in \mu_g}\frac{B_{\Qp}[u_0,\ldots, u_{n-1}]}{\left( {u_i^{p-1}-a_{n-r,r}^{i+1},\: f_\varepsilon}\right)}.
    \]
    Let us write $C_\varepsilon=C/(f_\varepsilon)$, so that $C_{\Qp}=\prod_{\varepsilon\in\mu_g}C_{\varepsilon, \Qp}$. Before proceeding, we need an explicit description of $B_{\Qp}$. Let $F_\bullet \in U_{\Qp}(R)$: the condition $\varphi_i\cdots \varphi_0(F_0)\subset F_{i+1}$ can be expressed in terms of matrices similarly as in the discussion above, namely by $\varphi_i\cdots \varphi_0 M_0=M_{i+1}\tilde{A}_i$. Here $\tilde{A}_i$ is the $r\times r$ submatrix of the product
    \[
    \setlength\arraycolsep{7pt}
        \scalebox{0.8}{$ \begin{pmatrix}
            p\cdot \textbf{1}_{i+1} &\\
            &\\
            &\\
            & \textbf{1}_{n-i-1}
        \end{pmatrix} $}
        \cdot
        \setlength\arraycolsep{1pt}
        \scalebox{0.8}{$ \begin{pmatrix}
            1 &   & \\   
            &    & \\
            &    & 1  \\
            a^0_{11}  &\cdots & a^0_{1,r}\\
            \vdots & &\vdots\\
            a^0_{n-r,1}  &\cdots & a^0_{n-r,r}
        \end{pmatrix}$ }
    \]
    given by the $i+2,\ldots,i+r+1$ rows (indexed modulo $n$). Since $p$ is invertible, so is the matrix $\tilde{A}_i$ for each $i$, hence we can write the variables $a^i_{jk}$ in terms of $a^0_{jk}$ for $i=1,\ldots, n-1$. This shows that 
    \[
    B_{\Qp}\cong \Qp \left[a_{jk}^0, \scalebox{1.1}{$ \frac{1}{\text{det}(\tilde{A}_i)}$} \,|\, \substack{
            i=0,\dots,n-1 \\
            j=1,\dots,n-r \\
            k=1,\dots,r}\right].
    \]
    Note that $\tilde{A}_i$ is the matrix representing $(\varphi_i\cdots \varphi_0)_{|F_0}$ and therefore it is equal to the product $A_i\cdots A_0$: in particular, we have equalities
    \[
    a^i_{11}\cdots a^0_{11}= \text{det}({A}_i)\cdots \text{det}({A}_0)=\text{det}(\tilde{A}_i).
    \]
    This implies that, under the above isomorphism, the variable $a^i_{11}$ is mapped to $\frac{\text{det}(\tilde{A}_{i})}{\text{det}(\tilde{A}_{i-1})}$ and thus $a^{i+1}_{n-r,r}$ corresponds to $p\cdot \frac{\text{det}(\tilde{A}_{i-1})}{\text{det}(\tilde{A}_i)}$, which we again denote by $\alpha_i$. Hence we get that
    \[
    C_{\Qp}\cong \prod_{\varepsilon\in\mu_g} \frac{\Qp \left[a_{jk}, \frac{1}{\text{det}(\tilde{A}_i)}, u_i \,|\, \substack{
            i=0,\dots,n-1 \\
            j=1,\dots,n-r \\
            k=1,\dots,r}\right]}{\left({ u_i^{p-1}-\alpha_i,\; (u_0\cdots u_{n-1})^h-\varepsilon p^m}\right)_i}
    \]
    (we are suppressing the superscript $0$ and writing $a_{jk}$ for the sake of simplifying the notation). Note that any $\varepsilon\in \mu_g$ can be written as $\varepsilon=\zeta^h$ for some $\zeta\in \mu_{p-1}$, so we have $C_\varepsilon \cong C_1$ (non canonically). Moreover, sending $u_{n-1}$ to $u_{n-1}/u_0\cdots u_{n-2}$ gives an isomorphism between $C_{1, \Qp}$ and the ring 
        \[
        \frac{\Qp \left[a_{jk}, \scalebox{1}{$ \frac{1}{\text{det}(\tilde{A}_i)}$}, u_i \,|\, \substack{
            i=0,\dots,n-1 \\
            j=1,\dots,n-r \\
            k=1,\dots,r}\right]}{\left({ u_i^{p-1}-\alpha_i,\, (u_{n-1})^{h}-  p^{m} }\right)_{i\neq n-1}},
        \]
        as the equation $u_{n-1}^{p-1}-\alpha_{n-1}$ becomes $u_{n-1}^{p-1}-p^{n-r}$: this shows that $C_{1,\Qp}$ is flat over $\Qp \left[a_{jk}, \scalebox{1}{$ \frac{1}{\text{det}(\tilde{A}_i)}$} \right]$, hence it embeds into $C_{1, \Qp} \otimes K$, where $K=\text{Frac}(\Qp[a_{jk}])$.
    
    \begin{claim}
        The rings $C_{\varepsilon, \Qp}$ are normal integral domains.
    \end{claim}
    \begin{proofc}
        Without loss of generality, assume $\varepsilon=1$ for the proof of the claim. We need to show that $C_{1, \Qp} \otimes K$ is a domain, for which it is enough to show that the surjection $C_{1, \Qp} \otimes K \twoheadrightarrow K(\sqrt[p-1]{\alpha_i},\sqrt[h]{ p^{m}})_{i\neq n-1}=L$ is an isomorphism. This holds if and only if both terms have the same dimension as $K$-vector spaces, if and only if $[L:K]=h(p-1)^{n-1}$: as $[L:K]=[L:K(\sqrt[h]{ p^m})][K(\sqrt[h]{ p^m}):K]$, we compute these two degree separately starting with the first.
        \begin{itemize}
            \item Write $F=K(\sqrt[h]{ p^m})$ and ${\Lambda}$ for the subgroup of $F^{\times} / (F^\times )^{p-1}$ generated by the classes of $\alpha_i$, for $i=0,\ldots, n-2$. Now $L$ is obtained from $K(\sqrt[h]{ p^m})$  by adjoining ($p-1$)-th roots of the elements $\alpha_i$, hence by Kummer theory $[L:K(\sqrt[h]{ p^m})]= |{\Lambda}|$. There is a surjective map
            \[
                \left( \frac{\Z}{(p-1)\Z} \right)^{n-1} \twoheadrightarrow\Lambda, \quad \underline{x} \mapsto \prod_{i=0}^{n-2}\alpha_i^{x_i}
            \]
            whose kernel consists of those $\underline{x}$ such that $\prod_{i=0}^{n-2}\alpha_i^{x_i}\in (F^\times)^{p-1}$. By the definition of $\alpha_i$, a simple calculation shows that
            \begin{equation}\label{product gamma_i}
            \prod_{i=0}^{n-2}\alpha_i^{x_i}= p^{\sum_i x_i}\cdot \text{det}(\tilde{A}_0)^{x_1-x_0}\cdots \text{det}(\tilde{A}_{n-3})^{x_{n-2}-x_{n-3}}\cdot\text{det}(\tilde{A}_{n-2})^{-x_{n-2}}.
            \end{equation}
            Note that, for each $i=0,\ldots, n-2$, $\text{det}(\tilde{A}_i)$ can be computed block-wise, and it is the product of a power of $p$ and the determinant of a matrix whose entries are some of the variables $a_{jk}$: this implies that $\text{det}(\tilde{A}_i)$ is an irreducible polynomial. Therefore, for the right hand side of equation (\ref{product gamma_i}) to belong to $(F^\times)^{p-1}$, we must have that $p-1$ divides $x_{n-2}$ and $x_{i+1}-x_i$ for each $i=0,\ldots, n-3$. But the conditions $0\leq x_i\leq p-2$ then imply that $x_{n-2}=\cdots=x_0=0$, i.e. the map is bijective, which shows that $[L:K(\sqrt[h]{ p^m})]=(p-1)^{n-1}$.
            \item $[K(\sqrt[h]{ p^m}):K]=h$, as can be seen by applying Kummer theory analogously to the previous case (and using that $h$ and $m$ are coprime).
        \end{itemize}
    \end{proofc}    
     To conclude, we show that all irreducible components intersect in the preimage of the worst singular point $\uptau$ under the natural map $\rho:U_1\rightarrow U$. Since $\uptau$ corresponds to the morphism $B\rightarrow \Fp$ sending all the variables $a^i_{jk}$ to zero, we have an isomorphism $\rho^{-1}(\uptau)\cong \Fp[u_0,\ldots, u_{n-1}]/(u_i^{p-1})$ and we see that its unique topological point corresponds to the map $C\rightarrow \Fp$ sending all the variables (including $u_i$) to zero. Now, $\rho$ is finite and surjective, so each irreducible component of $U_{1,\Fp}$ maps surjectively onto an irreducible component of $U_{\Fp}$: since $\uptau$ is contained in the intersection of all irreducible components of $U_{\Fp}$, all the irreducible components of $U_{1,\Fp}$ intersect in $\rho^{-1}(\uptau)$. Then, each irreducible component of $U_1$ contains $\rho^{-1}(\uptau)$ in its special fiber and hence they all intersect.
\end{proof}

\begin{remark}\label{flatness irr components}
    \begin{itemize}
        \item [$(i)$] If $g=1$, the local model $U_1$ is irreducible. Since it is also reduced, we see that it is an integral scheme.
        
        \item [$(ii)$] Suppose that $g>1$: it would be tempting to believe that the reduced scheme structure on the irreducible component of $U_1$ corresponding to $\varepsilon\in\mu_g$ is given by the ring
        \[
        C_\varepsilon = \frac{B[u_0,\ldots, u_{n-1}]}{\left( \scalebox{1.3}{$\substack{u_i^{p-1}-\alpha_i\\ (u_0\cdots u_{n-1})^h-\varepsilon p^m}$}\right)}.
        \]
        This would be true if $C_\varepsilon$ were flat over $\Zp$, in which case it would embed inside $C_{\varepsilon, \Qp}$ (which is reduced) and it would coincide with the scheme theoretic closure of its generic fiber. Unfortunately, it is not flat over $\Zp$. Indeed, the following calculation shows that $C_1$ has non-trivial $p$-torsion:
        \begin{align*}
            &p^m\big((u_1\cdots u_{n-1})^h-\beta_0^mu_0^{m(p-1)-h}\big)=\beta_0^m\big( \alpha_0^m(u_1\cdots u_{n-1})^h-p^m u_{0}^{m(p-1)-h} \big)=\\
            &=\beta_0^m\big( u_0^{m(p-1)}(u_1\cdots u_{n-1})^h-(u_0\cdots u_{n-1})^hu_{0}^{m(p-1)-h} \big)=0.
        \end{align*}
        Note that the element $(u_1\cdots u_{n-1})^h-\beta_0^mu_0^{m(p-1)-h}$ is non-zero inside $C_1$: if it were, we would get that $(u_1\cdots u_{n-1})^h=0$ inside 
        \[
        \frac{C_1}{(u_0)}=\frac{\big(B_{\Fp}/(\alpha_0)\big)[u_1,\ldots,u_{n-1}]}{(u_i^{p-1}-\alpha_i)_{i>0}},
        \]
        but this is a free $B_{\Fp}/(\alpha_0)$-module and $(u_1\cdots u_{n-1})^h$ is part of a basis, so it cannot be zero. Nevertheless, we can prove that the rings $C_\varepsilon$ are topologically flat, i.e. the generic fiber of $\text{Spec}(C_\varepsilon)$ is Zariski-dense.
    \end{itemize}
\end{remark}

\begin{lemma}\label{top flat unitary}
    The ring $C_\varepsilon$ is topologically flat.
\end{lemma}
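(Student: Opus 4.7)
The plan is to reinterpret topological flatness of $C_\varepsilon$ as a set-theoretic statement in $\text{Spec}(C)$ and then verify it via an explicit DVR-valued point construction. Let $J_\varepsilon := \ker(C \to C_{\varepsilon,\Qp})$, so that $D_\varepsilon := \text{Spec}(C/J_\varepsilon)$ is the $\Zp$-flat integral scheme-theoretic closure of the generic fiber of $\text{Spec}(C_\varepsilon)$ inside $\text{Spec}(C)$. Since the closed immersion $D_\varepsilon \hookrightarrow \text{Spec}(C_\varepsilon)$ induces a bijection on generic fibers, topological flatness of $C_\varepsilon$ is equivalent to the set-theoretic equality $V(f_\varepsilon) = D_\varepsilon$ inside $\text{Spec}(C)$, so it suffices to prove agreement on the special fiber.

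Working modulo $p$, the identity $v^{p-1} = p^{n-r} = 0$ makes $v = u_0 \cdots u_{n-1}$ nilpotent in $C_\Fp$, whence $f_\varepsilon \equiv v^h \pmod p$ is nilpotent as well; so $V(f_\varepsilon)_\Fp$ coincides as a set with the whole special fiber $V(p) \subset \text{Spec}(C)$. The lemma is thus reduced to the set-theoretic inclusion $V(p) \subseteq D_\varepsilon$. Since $V(p)$ is of finite type over $\Fp$, hence Jacobson, it suffices to check this inclusion at every closed point $\mathfrak{m} \in V(p)$.

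Given such a closed point $\mathfrak{m}$, I will exhibit it as the specialization of a point in $D_\varepsilon \cap D(p) = \text{Spec}(C_{\varepsilon,\Qp})$ by producing a DVR $\mathcal{O}$ of mixed characteristic together with a map $\psi : C \to \mathcal{O}$ whose residue factors through $\mathfrak{m}$ and which satisfies $v^h = \varepsilon p^m$ in $\mathcal{O}$. First, the flatness of $B$ over $\Zp$ allows one to lift the composition $B \to C/\mathfrak{m}$ to a map $B \to \mathcal{O}_0$ into a mixed characteristic DVR by standard commutative algebra (for instance, cut down $B_{\mathfrak{m} \cap B}$ by all but one element of a system of parameters containing $p$ and normalize). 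Passing to a sufficiently ramified extension $\mathcal{O}/\mathcal{O}_0$, one selects for each $i$ a lift $\tilde c_i \in \mathcal{O}$ of the residue $c_i$ of $u_i$ at $\mathfrak{m}$ with $\tilde c_i^{\,p-1} = \alpha_i$, using Hensel's lemma when $c_i \neq 0$ and any $(p-1)$-th root when $c_i = 0$. This produces a map $\psi$ with the correct residue, and the relation $(v^h)^g = v^{p-1} = p^{mg}$ forces $v^h = \zeta\, p^m$ for some $\zeta \in \mu_g(\mathcal{O})$.

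The main obstacle is then to adjust the lifts so that $\zeta = \varepsilon$. The only modifications that preserve the residue at $\mathfrak{m}$ are $\tilde c_i \mapsto \zeta_i \tilde c_i$ with $\zeta_i \in \mu_{p-1}(\mathcal{O})$ and $i \in S := \{i : c_i = 0\}$. The crucial observation is that $S$ is non-empty: if every $c_i$ were nonzero, then every $\tilde c_i$ (hence $v$) would be a unit in $\mathcal{O}$, making $p^{n-r} = v^{p-1}$ a unit and contradicting $p \in \mathfrak{m}_\mathcal{O}$. Picking any $i_0 \in S$ and varying $\zeta_{i_0}$ through $\mu_{p-1}$ (with the other $\zeta_i$ fixed to $1$) makes $\bigl(\prod_{i \in S} \zeta_i\bigr)^h$ sweep out $\mu_{p-1}^h = \mu_g$, so we can solve $\bigl(\prod_i \zeta_i\bigr)^h \zeta = \varepsilon$. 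The resulting map $\psi$ then lands in $D_\varepsilon$, showing $\mathfrak{m} \in D_\varepsilon$ and completing the plan.
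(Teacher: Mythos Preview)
Your proof is correct and follows essentially the same strategy as the paper's: lift a closed point of the special fiber to a mixed-characteristic DVR-valued point, observe that at least one coordinate $u_i$ must vanish in the residue field (else $p$ would be a unit), and then twist that coordinate by a suitable $(p-1)$-th root of unity to force the generic point into the component $C_{\varepsilon,\Qp}$. The only cosmetic difference is that the paper obtains the initial DVR lift in one step by invoking the flatness of $C$ itself over $\Zp$, whereas you lift the $B$-part first and then construct the $u_i$ by hand via Hensel and a ramified base change; the core adjustment argument is identical.
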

\begin{proof}
    We prove this for $C_1$. It is enough to prove the following: given any characteristic $p$ field $k$ and $\bar{x}\in C_1(k)$, there exists a mixed characteristic DVR $R$, with residue field $k$, and an element $x\in C_1(R)$ such that $x$ maps the closed point of $\text{Spec}(R)$ to the topological point underlying $\bar{x}$. For such an $R$-point $x$, let us write $x_k$ and $x_K$ for its restriction to $k$ and $K$ respectively, where $K=\text{Frac}(R)$. By the flatness of $C$ over $\Zp$, given $\bar{x}\in C_1(k)$ we can find an $x'\in C(R)$ satisfying the above condition for $C$. The question is whether $x$ maps the generic point of $\text{Spec}(R)$ to another connected component of $\text{Spec}(C_{\Qp})$ or not, but the next claim shows that we can suitably modify $x'$ to another $x\in C(R)$ such that $x_k=x'_k$ and $x_K\in C_1(K)$. 
    \begin{claim}
        Let $y\in C(R)$ and suppose that $y_K\in C_\varepsilon(K)$ for some $\varepsilon \in \mu_g$. Then there exists another $x\in C(R)$ such that $x_k=y_k$ and $x_K\in C_1(K)$.
    \end{claim}
    \begin{proofc}
        The point $y$ is determined by where it maps the variables $a^i_{jk}$ and $u_i$ appearing in the presentation of $C$: write $r_i$ and $s_i$ for the image of $u_i$ and $\alpha_i$ under $y$. Since $y_K\in C_\varepsilon(K)$ we have that $(r_0\cdots r_{n-1})^h=\varepsilon p^m$ in $K$. There exists an index $j$ such that $r_j$ maps to zero in $k$ (otherwise $p$ would be invertible in $R$): define $x$ by setting $x(u_j)=\zeta r_j$, where $\zeta\in\mu_{p-1}$ is such that $\zeta^h=\varepsilon^{-1}$, $x(r_i)=y(r_i)$ for $i\neq j$ and $x(a^i_{jk})=y(a^i_{jk})$. Clearly $x_k=y_k$ since $r_j=0$ in $k$, and the fact that $x$ maps $(u_0\cdots u_{n-1})^h$ to $p^m$ means that $x_K\in C_1(K)$. This finishes the proof of the claim and of the lemma.
    \end{proofc}
    
\end{proof}

We are now going to prove that $\text{Spec}(C^{(r)})$ is not normal, except if $r=n-1$, in which case $C^{(n-1)}$ is a regular ring. Before doing so, it will be useful to have a combinatorial criterion to check whether the variables $\alpha_i$ and $\beta_i$ are zero on a point of $\text{M}^\text{GL}_{0,\Fp}$, compare with \cite[Lemma 3.1]{Sha}

\begin{proposition}
    Let $y:\emph{Spec}(k) \rightarrow \emph{M}^\emph{GL}_{0,\Fp}$ be a geometric point in the special fiber of the Iwahori local model and let $x$ be the alcove corresponding to the stratum it belongs to. Then
    \begin{align*}
        &(i) \:\:\alpha_i \neq 0 \:\:\Leftrightarrow \bar{\varphi}_{i} \:\:\text{is an isomorphism}  \:\:\Leftrightarrow \:\: 1-t^x_{i+1}(i+1)=0, \\
        &(ii) \:\: \beta_i \neq 0 \:\:\Leftrightarrow \varphi_{i _{|F_i}} \:\:\text{is an isomorphism} \:\:\Leftrightarrow \:\: t^x_i(i+1)=0.
    \end{align*}    
\end{proposition}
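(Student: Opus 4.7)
The equivalences on each line split naturally into two halves: an algebraic one that identifies $\alpha_i$ and $\beta_i$ with determinants, and a combinatorial one that reads off these determinants from the alcove. The algebraic half is essentially definitional: in the description of $B$ via $\varphi_i M_i = M_{i+1}A_i$ recalled just before Definition \ref{def C}, one reads off that $\beta_i = a^i_{11}$ equals (up to sign) $\det A_{i-1} = \det(\varphi_{i|F_i})$, while $\alpha_i = a^{i+1}_{n-r,r}$ equals $\det \bar\varphi_i$. Invariantly this comes from Proposition \ref{fitting} combined with the identification $\omega_i^{\text{loc}} \cong (\bigwedge F_{n-(i+1)})^{-1} \otimes \bigwedge F_{n-i}$ and its quotient analogue. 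Hence over the residue field at $y$, the vanishing of $\alpha_i$ (resp.\ $\beta_i$) is equivalent to the square map $\bar\varphi_i$ of rank $n-r$ (resp.\ $\varphi_{i|F_i}$ of rank $r$) failing to be an isomorphism.

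For the combinatorial half, I would reduce to the distinguished $\Fp$-point $F^x_\bullet$ of the stratum $S_{w_x}$, defined by $F^x_i = \bigoplus_{t^x_i(j)=1}\Fp \text{e}_j$. This reduction is legitimate because the conditions ``$\varphi_{i|F_i}$ is an isomorphism'' and ``$\bar\varphi_i$ is an isomorphism'' are preserved by the $L^+I$-action defining the Schubert stratification: $L^+I$ acts by base-change automorphisms of the ambient lattice chain $\Lambda_\bullet$, which preserves each map $\varphi_i$ and the data $F_\bullet$ up to isomorphism, and hence preserves the ranks of the induced maps on sub- and quotient modules. Consequently the vanishing loci of $\alpha_i$ and $\beta_i$ are unions of strata, so it suffices to evaluate at $F^x_\bullet$.

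At $F^x_\bullet$ the map $\varphi_i$ reduces over $\Fp$ to $\mathrm{diag}(1,\ldots,0,\ldots,1)$ with the $0$ in position $i+1$. Thus $\varphi_{i|F^x_i} : F^x_i \to F^x_{i+1}$ is injective iff $\text{e}_{i+1} \notin F^x_i$, i.e.\ $t^x_i(i+1) = 0$. The alcove axioms force $x_{i+1} = x_i + \text{e}_{j_0}$ for a unique $j_0$, and the minuscularity of $x$ then translates via $\omega_{i+1} = \omega_i + \text{e}_{i+1}$ into the identity $t^x_{i+1}(j) = t^x_i(j) + \delta_{j,j_0} - \delta_{j,i+1}$; if $t^x_i(i+1) = 0$, then $j_0 \neq i+1$ would force $t^x_{i+1}(i+1) = -1$, impossible, so $j_0 = i+1$, giving $t^x_{i+1} = t^x_i$ and $F^x_i = F^x_{i+1}$. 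Hence injectivity already promotes to the identity, establishing (ii).

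For (i), I would run the parallel analysis on $\bar F^x_i = \Fp^n / F^x_i$, which has basis $\{\text{e}_j : t^x_i(j) = 0\}$. The map $\bar\varphi_i$ sends $\text{e}_j \mapsto \text{e}_j$ for $j \neq i+1$ and kills $\text{e}_{i+1}$; tracing through the two cases $j_0 = i+1$ and $j_0 \neq i+1$ of the alcove case analysis above shows that $\bar\varphi_i$ is an isomorphism precisely when $t^x_{i+1}(i+1) = 1$ (equivalently $1 - t^x_{i+1}(i+1) = 0$): the case $j_0 = i+1$ with $t^x_i(i+1) = 1$ yields the identity on $\bar F^x_i = \bar F^x_{i+1}$, while in every other case a basis vector gets sent into the kernel. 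The main point where care is needed is precisely this alcove case analysis, relating $j_0$ to the position $i+1$; the remainder is bookkeeping.
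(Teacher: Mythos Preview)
Your proof is correct and follows the same strategy as the paper: reduce to the base point $F^x_\bullet$ by $L^+I$-invariance of the isomorphism conditions, then compute directly (the paper uses a slightly quicker cokernel computation $k^n/(\varphi_i(k^n)+F^x_{i+1})$ for (i) and a bare dimension count for (ii) in place of your alcove case analysis, but the content is identical). One harmless slip: from $\varphi_i M_i = M_{i+1} A_i$ it is $A_i$, not $A_{i-1}$, that represents $\varphi_{i|F_i}$.
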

\begin{proof}
    The condition of $\bar{\varphi}_{i}$ and $\varphi_{i _{|F_i}}$ being isomorphisms or not is constant on the Schubert cells, hence we can assume that $y=F^x_\bullet$ is the base point given by some alcove $x$.
    \begin{itemize}
        \item [$(i)$] $\bar{\varphi}_i$ is an isomorphism if and only if 
        $$\text{dim}\left( \frac{k^n/F^x_{i+1}}{\text{im}(\bar{\varphi}_i)} \right)=\text{dim} \left( \frac{k^n}{\varphi_i(k^n)+F^x_{i+1}} \right)=0 $$
        Now, $\varphi_i(k^n)+F^x_{i+1}=\bigoplus_{j\neq i+1}k\text{e}_j + \bigoplus_{t^x_{i+1}(j)=1}k\text{e}_j = k^n$ holds if and only if $t^x_{i+1}(i+1)=1$. This proves the second equivalence, for the first see \cite[Lemma 3.15]{Sha}.
        \item [$(ii)$] Again, we only prove the second equivalence. In this case, $\varphi_{i_{|F_i}}$ is an isomorphism if and only if
        $$ \text{dim}\left( \frac{F^x_{i+1}}{\varphi_i(F^x_i)} \right) = 0 $$
        Since $F^x_i$ and $F^x_{i+1}$ both have dimension $r$ and $\varphi_i$ has a one-dimensional kernel, the above equality holds if and only if $F_i \cap \text{ker}(\varphi_i) = 0$, which in turn is equivalent to $t^x_i(i+1)=0$.
    \end{itemize}
\end{proof}

\begin{remark}
    It is shown in \cite{Goer} that the special fiber $\text{M}^\text{GL}_{0,\Fp}$ has $\binom{n}{r}$ irreducible components, which are parameterized by alcoves $x$ such that the difference vectors are constant, i.e. $t_i^x=t_j^x$ for all $i, j$. More precisely, each irreducible component of the special fiber is the closure of the stratum corresponding to an alcove with this property. We call such alcoves extreme and write $t^x$ for their difference vector. In this case, the fact whether $\alpha_i$ and $\beta_i$ are zero or not is determined by the $(i+1)$-th coordinate of $t^x$:
    \begin{align*}
        &t^x(i+1)=0 \,\Leftrightarrow \, \beta_i\neq 0,\, \alpha_i=0 \,\Leftrightarrow \, \bar{\varphi}_{i} \,\emph{is not an isomorphism} , \,\varphi_{i _{|F_i}}\, \emph{is an isomorphism},\\
        &t^x(i+1)=1 \,\Leftrightarrow \, \beta_i=0,\, \alpha_i\neq 0 \, \Leftrightarrow \,
        \bar{\varphi}_{i} \,\emph{is an isomorphism} , \,\varphi_{i _{|F_i}}\, \emph{is not an isomorphism}.
    \end{align*}
    In particular, either the restriction of $\varphi_i$ to $F_i$ or the map induced on the quotient has to be an isomorphism.
\end{remark}

\begin{theorem}\label{non normal}
    Suppose that $r\neq n-1$: then the local model $U_1$ is not normal. If $r=n-1$, then $U_1$ is regular.
\end{theorem}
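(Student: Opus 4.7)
The plan is to split by whether $n-r=1$ (the Drinfeld case) or $n-r\geq 2$, and in the latter case further by the value of $g=\gcd(p-1,n-r)$.

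For $r=n-1$, regularity will be proved by a local computation at the deepest stratum $\rho^{-1}(\uptau)$. Because $n-r=1$, essentially the only relations surviving at $\uptau$ are $\alpha_i\beta_i=p$ together with $\alpha_0\cdots\alpha_{n-1}=p^{n-r}=p$ (the latter coming from $A_{n-1}\cdots A_0=pI_r$), so up to a smooth factor the completed Iwahori local ring at $\uptau$ is
\[
\Zp\llbracket\alpha_0,\dots,\alpha_{n-1}\rrbracket/(\alpha_0\cdots\alpha_{n-1}-p).
\]
Substituting $\alpha_i=u_i^{p-1}$ (which is allowed by the very definition of $C^{(n-1)}$) yields the completed local ring of $U_1$ at $\rho^{-1}(\uptau)$ as
\[
\Zp\llbracket u_0,\dots,u_{n-1}\rrbracket/\bigl((u_0\cdots u_{n-1})^{p-1}-p\bigr).
\]
The defining polynomial has $-p$ as its linear part in the maximal ideal $(p,u_0,\dots,u_{n-1})$, since the product term has degree $n(p-1)\geq 2$, so the quotient is regular. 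Regularity then extends to all of $U_1$ because every other stratum lies over a smoother stratum of $U^{(n-1)}$.

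For $r\neq n-1$ and $g>1$, the non-normality follows essentially as a corollary of Proposition~\ref{geom structure U1}: $U_1$ is connected, reduced, and has $g\geq 2$ distinct irreducible components, all meeting at $\rho^{-1}(\uptau)$. Since a connected Noetherian normal scheme is necessarily irreducible, this is incompatible with normality.

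For $r\neq n-1$ but $g=1$, the scheme $U_1$ is already irreducible and the previous argument breaks down; here I would instead exhibit an explicit element of $\mathrm{Frac}(C)$ integral over $C$ but not in $C$. Setting $v=u_0\cdots u_{n-1}$, we have $v^{p-1}=p^{n-r}$ in $C$; since $\gcd(p-1,n-r)=1$, Bezout yields integers $s>0$ and $t<0$ with $s(p-1)+t(n-r)=1$, and the element $w:=p^sv^t\in C_{\Qp}$ then satisfies
\[
w^{p-1}=p^{s(p-1)}v^{t(p-1)}=p^{s(p-1)+t(n-r)}=p,
\]
so $w$ is integral over $\Zp\subset C$. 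The main obstacle is verifying that $w\notin C$, i.e.\ that $v^{|t|}$ does not divide $p^s$ in $C$. I plan to establish this by reducing modulo $p$ and comparing orders of vanishing of $p^s$ and $v^{|t|}$ along an appropriately chosen extreme-alcove divisor in $U_{1,\Fp}$, using the combinatorial criterion relating vanishing of the $\alpha_i$ to the difference vector $t^x$; this forces a non-integer ``slope'', which cannot be realized inside the flat ring $C$. The Drinfeld computation and the $g>1$ case are comparatively routine once the corresponding local forms are in place.
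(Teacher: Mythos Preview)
Your case split ($r=n-1$; $r\neq n-1$ with $g>1$; $r\neq n-1$ with $g=1$) matches the paper's, and the $g>1$ argument is exactly the one used there. The other two cases deserve comment.

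\medskip
\textbf{The case $r=n-1$.} Your computation of the completed local ring at $\rho^{-1}(\uptau)$ is correct and the ``$-p$ is the linear part'' argument is valid there. What is not justified is the sentence ``regularity then extends to all of $U_1$ because every other stratum lies over a smoother stratum'': there are many closed points in $U_{1,\Fp}$ other than $\rho^{-1}(\uptau)$, and regularity is not a priori stratified. The paper handles this by first establishing the \emph{global} isomorphism $C^{(n-1)}\cong\Zp[u_0,\dots,u_{n-1}]/((u_0\cdots u_{n-1})^{p-1}-p)$ and then checking regularity at every maximal ideal via an induction on $n$. Your linear-part argument can in fact be made to work at every maximal ideal once you have this global presentation (at any $\mathfrak m\ni p$ some $u_i\in\mathfrak m$, hence the product term lies in $\mathfrak m^{p-1}\subset\mathfrak m^2$, while $p\notin\mathfrak m^2$), but you should say so rather than appeal to stratification.

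\medskip
\textbf{The case $g=1$.} Here your proposal diverges from the paper and has a genuine gap. The paper exhibits the element $\beta_i u_i/u_k\in\mathrm{Frac}(C)$, checks $(\beta_i u_i/u_k)^{p-1}=\beta_i^{p-2}\beta_k\in C$, and then proves $u_k\nmid\beta_i u_i$ by passing to $C/(u_j)_{j\neq i}\cong\bigl(B/(\alpha_j)_{j\neq i}\bigr)[u_i]/(u_i^{p-1}-\alpha_i)$, which is free over $B/(\alpha_j)_{j\neq i}$, so it suffices to see $\beta_i\neq 0$ there; this last point is verified by writing down an explicit $r$-admissible alcove on whose stratum $\beta_i\neq 0$ while $\alpha_j=0$ for $j\neq i$ (such an alcove exists precisely because $n-r\geq 2$).

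Your element $w=p^s v^t$ with $s(p-1)+t(n-r)=1$, $t<0$, is indeed integral, and in the cases where the normalization is known (e.g.\ $r=1$, via the later map $f:C^{(1)}\hookrightarrow C^{(n-1)}$ in the paper) one can check that $w\notin C$. However, your proposed verification---``reduce mod $p$ and compare orders of vanishing of $p^s$ and $v^{|t|}$ along an extreme-alcove divisor''---does not go through as stated. On any DVR quotient of $C$ the relation $v^{p-1}=p^{n-r}$ forces $\mathrm{val}(v)=\tfrac{n-r}{p-1}\mathrm{val}(p)$, whence
\[
\mathrm{val}(p^s)-\mathrm{val}(v^{|t|})=\Bigl(s-\tfrac{|t|(n-r)}{p-1}\Bigr)\mathrm{val}(p)=\tfrac{s(p-1)+t(n-r)}{p-1}\,\mathrm{val}(p)=\tfrac{1}{p-1}\,\mathrm{val}(p)>0,
\]
so \emph{no} valuation detects the failure of $v^{|t|}\mid p^s$. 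Likewise, reducing modulo $p$ kills $p^s$, so divisor-order comparisons in $U_{1,\Fp}$ do not directly give a contradiction. In short, $w$ is a reasonable candidate, but you still owe a proof that $w\notin C$; the quotient-ring trick used in the paper for $\beta_i u_i/u_k$ is the step that actually requires $r\neq n-1$ and does the real work.
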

\begin{proof}
    \begin{itemize}
        \item [$(i)$] Suppose that $r\neq n-1$. When $g=\text{gcd}(n-r,p-1)>1$, we have showed in Proposition \ref{geom structure U1} that all the irreducible components of $U_1$ intersect in the preimage $\uptau_1$ of the worst singular point, therefore $U_1$ is not normal. Now, if $g=1$ the local model $U_1$ is reduced and irreducible. Consider the following equalities between elements of $\text{Frac}(C)$
        \[
        \left(\frac{\beta_i u_i}{u_k}\right)^{p-1}=\beta_i^{p-1}\frac{\alpha_i}{\alpha_k}=\beta_i^{p-1}\frac{\beta_k}{\beta_i}=\beta_i^{p-2}\beta_k:
        \]
        if we show that $u_k$ does not divide $\beta_iu_i$ in $C$, the first part of the theorem is proved.
        \begin{claim}
            For all $k\neq i$, $u_k$ does not divide $\beta_iu_i$ in $C$.
        \end{claim}
        \begin{proofc}
            Seeking a contradiction, suppose that there exists $z\in C$ such that $\beta_i u_i=zu_k$: then $\beta_i u_i$ is zero in the quotient 
            \[ 
            D=\frac{C}{(u_0,\ldots, \hat{u}_i,\ldots, u_{n-1})}=\frac{(B/(\alpha_j)_{j\neq i})[u_i]}{(u_i^{p-1}-\alpha_i)}
            \]
            where $\hat{u}_i$ means that we are omitting $u_i$ in the generators of the ideal. But $D$ is a free $(B/(\alpha_j)_{j\neq i})$-module of rank $p-1$ and $u_i$ is part of a basis. If we show that $\beta_i \neq 0$ in $B/(\alpha_j)_{j\neq i}$ we will obtain a contradiction, hence proving the claim. We are going to determine an alcove $x$ on whose associated stratum $\alpha_j=\beta_j=0$ for all $j\neq i$, $\alpha_i=0$ and $\beta_i\neq 0$: in other words we are imposing that $\varphi_{i_{|F_i}}$ is an isomorphism and $\bar{\varphi}_i$ and all other $\varphi_{j_{|F_j}}$, $\bar{\varphi_j}$ are not. When $i=0$ this is equivalent, for the difference vectors and the vectors of the alcove itself, to have the following form:
            \begin{align*}
                t^x_0=&(0,*,*,\ldots,*,*,0), &x_0=(0,*,*,\ldots,*,*,0), \\
                t^x_1=&(0,1,*,\ldots,*,*,*), &x_1=(1,1,*,\ldots,*,*,*), \\
                &\vdots &\vdots \\
                t^x_{n-2}=&(*,*,*,\ldots,0,1,*), &x_{n-2}=(*,*,*,\ldots,1,1,*),\\
                t^x_{n-1}=&(*,*,*,\ldots,*,0,1), &x_{n-1}=(*,*,*,\ldots,*,1,1).
            \end{align*}
            Instead, when $i\neq0$ we get:
            \begin{align*}
                t^x_0=&(1,*,*,*,\ldots,*,*,*,0), &x_0=(1,*,*,*,\ldots,*,*,*,0), \\
                t^x_1=&(0,1,*,*,\ldots,*,*,*,*), &x_1=(1,1,*,*,\ldots,*,*,*,*), \\
                t^x_2=&(*,0,1,*,\ldots,*,*,*,*), &x_2=(*,1,1,*,\ldots,*,*,*,*),\\
                &\vdots  &\vdots \\
                t^x_i=&(*,*,\ldots,\overset{i}{0},\overset{i+1}{0},\ldots,*,*), &x_i=(*,*,\ldots,\overset{i}{1},\overset{i+1}{0},\ldots,*,*),\\
                &\vdots &\vdots \\
                t^x_{n-2}=&(*,*,*,*,\ldots,*,0,1,*), &x_{n-2}=(*,*,*,*,\ldots,*,1,1,*),\\
                t^x_{n-1}=&(*,*,*,*,\ldots,*,*,0,1), &x_{n-1}=(*,*,*,*,\ldots,*,*,1,1).
            \end{align*}           
            We write a choice for the alcove $x$ when $i=0$.
                \begin{align*}
                x_0=&(0,\overset{r}{\overbrace{1,\ldots,1,1}},0,\ldots,0), \\
                x_1=&(1,1,\ldots,1,1,0\ldots,0), \\
                x_2=&(1,1,\ldots,1,1,1\ldots,0), \\
                &\vdots\\
                x_{n-r}=&(1,1\ldots,1,1,1,\ldots, 1),\\
                x_{n-r+1}=&(1,2\ldots,1,1,1,\ldots, 1), \\
                &\vdots\\
                x_{n-1}=&(1,2\ldots,2,1,1,\ldots, 1).
            \end{align*}  
            This is enough to show that $C$ is not normal and the alcove for general $i$ is obtained similarly as for $i=0$. Note that when $r=n-1$ the difference vectors $t^x_i$ have only one zero component, hence this argument does not apply to this case (in fact, it will turn out that $u_k$ divides $\beta_i u_i$). This finishes the proof of the claim.
        \end{proofc}
        
        \item [$(ii)$] In the case $r=n-1$, $B=\Zp[a^i_k|i=0,\ldots, n-1; k=1,\ldots, n-1]/I$, where $I$ is generated by
        $$ a^{i+1}_{k-1}+a^{i+1}_{n-1}a^i_{k}\,, \:\: a^i_1a^{i+1}_{n-1}-p $$
        for $i=0,\ldots, n-1$ and $k=2,\ldots, n-1$. From these equations we see that 
        \[
        a^i_k=a^i_{n-1}a^{i-1}_{n-1}\dots a^{i-(n-1-k)}_{n-1} 
        \]
        so we can express all the variables in terms of $a^i_{n-1}$, and we are left with the only equation $a^0_{n-1}\dots a^{n-1}-p$. So $B$ is isomorphic to
        \[
        \Zp[x_0,\ldots,x_{n-1}]/(x_0\dots x_{n-1}-p) 
        \]
        and under this isomorphism we have $\alpha_i=a^{i+1}_{n-1}=x_i$ and $\beta_i=a^i_1=x_0\dots \hat{x}_i\dots, x_{n-1}$ ($i$-th term omitted). We now prove that $C=B[u_0,\ldots, u_{n-1}]/(u_i^{p-1}-x_i)_i$ is regular. Note that $C\cong \Zp[u_0\ldots,u_{n-1}]/((u_0\dots u_{n-1})^{p-1}-p)$: we proceed by induction on $n$.
        \begin{itemize}
            \item [$(i)$] When $n=1$, $\spec(\Zp[u]/(u^{p-1}-p))=\{ (0), (u,p)\}$, but $(u,p)=(u)$ in $C$, which hence is regular.
            \item [$(ii)$] Assuming that $C_n=\Zp[u_0\ldots,u_{n-1}]/((u_0\dots u_{n-1})^{p-1}-p)$ is regular, we want to show that $C_{n+1}=\Zp[u_0,\ldots,u_{n}]/((u_0\dots u_{n})^{p-1}-p)$ is regular as well. We can write
            $$\spec(C_{n+1})=D(u_0)\cup\dots \cup D(u_n)\cup V(u_0,\ldots,u_n,p),$$
            where $D(u_i)=\spec(\Zp[u_i^\pm,u_0,\ldots,u_{n}]/((u_0\dots u_{n})^{p-1}-p))\cong \Gm\,\sqcup \,\spec({C_n})$ (send $u_j$ to $u_ju_i^{-1}$ for any choice of $j\neq i$) is regular by induction hypothesis. Thus we only have to check that $(u_0,\ldots,u_n,p)$ can be generated by $n+1$ elements, but this is true since $(u_0,\ldots,u_n,p)=(u_0,\ldots,u_n)$.
        \end{itemize}
        \end{itemize}
\end{proof}

\begin{remark}
    Suppose that $g>1$: the argument of the claim in the first part of Theorem \ref{non normal} can be applied to show that, for any $i\neq k$, $u_k$ does not divide $\beta_i u_i$ inside $C_\varepsilon$. Together with the topological flatness of $C_\varepsilon$, one could hope to deduce that the same holds in the quotient $C_\varepsilon'=C_\varepsilon/(p\text{-torsion})$: however, this is not true in general, as one can check in the case when $n=3,r=1,p=3$. In this case in fact, the ring $C_1$ is
    \[
    \frac{\Zp[x_0,x_1,x_2,u_0,u_1,u_2]}{\left( \scalebox{1.3}{$\substack{x_0x_1x_2-p,\, u_0u_1u_2-p \\ u_0^{p-1}-x_1x_2,\, u_1^{p-1}-x_0x_2,\, u_2^{p-1}-x_0x_1}$}  \right)}
    \]
    and we claim that $u_ku_j-x_iu_i$, for $i,j,k$ all distinct, is a non-zero $p$-torsion element. But this means that $u_k$ divides $x_iu_i$ inside $C_1'$, so the argument of the claim fails in this case.
\end{remark}

\begin{remark} \label{map 1 to n-1}
    \begin{itemize}
        \item [$(i)$] When $r=1$ the equations of the local model are very similar to the case $r=n-1$, with the exception that $\alpha_i$ and $\beta_i$ are exchanged: $\alpha_i=x_0\dots \hat{x}_i\dots, x_{n-1}$ and $\beta_i=x_i$. Hence the local model is the spectrum of
        \[
        \frac{\Zp[x_0,\ldots, x_{n-1},v_0,\ldots,v_{n-1}]}{(x_0\cdots x_{n-1}-p, v_i^{p-1}-x_0\cdots \hat{x}_i\cdots x_{n-1})_i}.
        \]
        \item[$(ii)$] The fact that when $r=n-1$ the local model is regular was already known thanks to \cite[Corollary 3.4.3]{HR}. In fact, in this case $\frac{\beta_i u_i}{u_j}=\frac{x_0\cdots \hat{x}_i \hat{x}_j\cdots x_{n-1}u_j^{p-1}u_i}{u_j}$ already belongs to $C$, hence the argument above does not apply here.
        \item  [$(iii)$] The ring $C$ is Cohen-Macaulay for all $n$ and $r$, since it is finite and free as a $B$-module, which is Cohen-Macaulay thanks to \cite{He}. Therefore, it is not regular in codimension one for $r\neq n-1$.
    \end{itemize}
\end{remark}

\subsection{Normalization of local model in the ``bad'' \texorpdfstring{ Drinfeld case}{}}

In this section, we are going to compute the normalization of the local model in the case $r=1$, which amounts to finding the normalization of each of its irreducible components with their reduced scheme structure. Recall that they are indexed by the $g$-th roots of unity in $\Zp$, where $g=\text{gcd}(p-1, n-1)$, and that they are all (non canonically) isomorphic: given this, we are going to find the normalization of the irreducible component corresponding to $1\in \mu_g$. To simplify the notation, we are going to denote by $C_1'$ and $\tilde{C}_1$ the irreducible component and its normalization respectively: despite not knowing a presentation of $C_1'$, we can use the one of $C_1$ to describe the normalization. As explained in Remark \ref{map 1 to n-1} and Proposition \ref{non normal}, we can describe $C^{(1)}$ and $C^{(n-1)}$ (which were defined in Definition \ref{def C}) as follows:
\begin{align*}
    &C^{(1)}=\frac{\Zp[x_0,\ldots, x_{n-1},v_0,\ldots,v_{n-1}]}{(x_0\dots x_{n-1}-p, v_i^{p-1}-x_0\dots \hat{x}_i\dots x_{n-1})_i},\\
    \\
    &C^{(n-1)}=\frac{\Zp[x_0,\ldots, x_{n-1},u_0,\ldots,u_{n-1}]}{(x_0\dots x_{n-1}-p, u_i^{p-1}-x_i)_i}.
\end{align*}
Therefore, $C_1'$ is the quotient of 
\[
C_1=\frac{\Zp[x_i, v_i | i=0,\ldots, n-1]}{\left( \scalebox{1.2}{$\substack{ x_0\dots x_{n-1}-\,p\\
        v_i^{p-1}-\,x_0\dots \hat{x}_i\dots x_{n-1}\\
        (v_0\cdots v_{n-1})^{\frac{p-1}{g}}-\,p^{\frac{n-1}{g}} }$} \right)}
\]
by its $p$-torsion. We are first going to identify $\Tilde{C}_1$ with a subring of $C^{(n-1)}$ generated by suitable elements. This description is closely related to a Veronese subring of a polynomial ring over $\Zp$, for which we can give an explicit presentation in terms of generators and relations generalizing a result of \cite{Stu}, see Proposition \ref{presentation I} and the discussion preceding it. We will make use of this to find a presentation for $\Tilde{C}_1$. The idea is to consider the following map of $B$-algebras, where $B=\Zp[x_0,\ldots,x_{n-1}]/(x_0\cdots x_{n-1}-p)$:
\begin{align*}
        &C_1\xrightarrow{\makebox[0.9cm]{$f$}} C^{(n-1)}\\
        &v_i\mapsto u_0\dots\hat{u}_i\dots u_{n-1},
\end{align*}
which is finite and whose image turns out to be the subring mentioned above. Note that $f$ factors to a map $f':C_1'\longrightarrow C^{(n-1)}$ since $C^{(n-1)}$ does not have $p$-torsion.

\begin{lemma}\label{gen fibers}
    The maps $f$ and $f'$ are finite. Passing to the fraction fields, $f'$ induces the map
\begin{align*}
       \frac{\Qp(v_0,\ldots, v_{n-1})}{(v_0^{\frac{p-1}{g}}-p^{\frac{n-1}{g}})} &\xrightarrow{\makebox[1cm]{}} 
       \frac{\Qp(u_0,\ldots, u_{n-1})}{(u_0^{p-1}-p)}\\
       v_0 &\longmapsto u_0^{n-1} \\
       v_i &\longmapsto u_0/u_i,\quad i>0. 
    \end{align*}
In particular, $f'$ is injective and identifies $C^{(n-1)}$ with the normalization of $C_1'$ inside $\emph{Frac}(C^{(n-1)})$.
\end{lemma}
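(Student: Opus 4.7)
The plan is to establish the three assertions in sequence, with most of the work going into the generic-fiber description; the rest is standard once that is in place.

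\textbf{Well-definedness and finiteness.} First I would verify that $f$ respects the defining relations of $C_1$ under $v_i\mapsto \prod_{j\ne i}u_j$: the relation $v_i^{p-1}=\prod_{j\ne i}x_j$ is immediate from $u_j^{p-1}=x_j$, and $(v_0\cdots v_{n-1})^{(p-1)/g}$ maps to $((\prod_j u_j)^{p-1})^{(n-1)/g}=p^{(n-1)/g}$ using $(\prod_j u_j)^{p-1}=p$ in $C^{(n-1)}$. Finiteness of $f$ (hence of $f'$) follows because $C^{(n-1)}$ is generated as a $B$-algebra by $u_0,\ldots,u_{n-1}$, each integral over $B$ via $u_i^{p-1}=x_i\in B$, and $f$ is $B$-linear. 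The map $f$ factors through $C_1'$ because $C^{(n-1)}\cong \Zp[u_0,\ldots,u_{n-1}]/((u_0\cdots u_{n-1})^{p-1}-p)$ is $\Zp$-flat: the defining polynomial is irreducible over $\Qp$ (its $p-1$ geometric branches form a single $\text{Gal}(\Qp(\sqrt[p-1]{p})/\Qp)$-orbit), so $C^{(n-1)}$ is a domain in which $p$ is a nonzerodivisor, hence $p$-torsion free.

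\textbf{Generic fiber description.} In $C_{1,\Qp}$ one can use $x_iv_i^{p-1}=p$ to eliminate the $x_i$'s and rewrite $C_{1,\Qp}\cong \Qp[v_0,\ldots,v_{n-1},\tfrac1p]/\bigl((v_0\cdots v_{n-1})^{p-1}-p^{n-1}\bigr)$. Writing $p-1=gh$ and $n-1=gm$ with $\gcd(h,m)=1$, the factorization
\[
(v_0\cdots v_{n-1})^{p-1}-p^{n-1}=\prod_{\zeta\in\mu_g}\bigl((v_0\cdots v_{n-1})^h-\zeta p^m\bigr)
\]
together with an Eisenstein/Gauss-type irreducibility check for each factor gives the decomposition of Proposition \ref{geom structure U1}, and the $\zeta=1$ piece yields $C_{1,\Qp}'\cong \Qp[v_0,\ldots,v_{n-1},\tfrac1p]/\bigl((v_0\cdots v_{n-1})^h-p^m\bigr)$. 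Parallel work on $C^{(n-1)}_{\Qp}$ leaves the single relation $(u_0\cdots u_{n-1})^{p-1}=p$. The formulas advertised in the statement correspond to reading ``$v_0$'' as the product $v_0\cdots v_{n-1}$ (whose $(p-1)/g$-th power is $p^{(n-1)/g}$) and ``$u_0$'' as $u_0\cdots u_{n-1}$ (whose $(p-1)$-th power is $p$); indeed $f(v_0\cdots v_{n-1})=(\prod_j u_j)^{n-1}$ and $f(v_i)=(\prod_j u_j)/u_i$ for $i>0$, recovering exactly the map in the statement.

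\textbf{Injectivity and normalization.} The image of $f'_{\Qp}$ contains both $\widetilde u_0^{n-1}$ and each $\widetilde u_0/u_i$ ($i>0$), and these together generate $\text{Frac}(C^{(n-1)}_{\Qp})$: ratios of the $\widetilde u_0/u_i$ recover all $u_i/u_j$, and combined with $\widetilde u_0^{n-1}$ one reconstructs each $u_i$. Since both fraction fields have transcendence degree $n-1$ over $\Qp$, the map $f'$ is generically an isomorphism and hence injective on the whole ring (both sides being flat over their common $\Zp$-flat subring, injectivity passes from the generic fiber). Finally, $C^{(n-1)}$ is regular by Theorem \ref{non normal}(ii), thus normal; it is a finite integral extension of $f'(C_1')$ with the same fraction field, so it is precisely the integral closure of $C_1'$ inside $\text{Frac}(C^{(n-1)})$, i.e.\ the normalization of $C_1'$ in that field.

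\textbf{Main obstacle.} The delicate step is the bookkeeping of the generic fiber: identifying the ``$v_0$'' and ``$u_0$'' of the statement as products of the original variables, and verifying irreducibility of $(v_0\cdots v_{n-1})^h-p^m$ carefully enough to pin down a single component. Once that change of variables is in place, the identification of the normalization is a routine combination of finiteness of $f$, equality of fraction fields, and regularity of $C^{(n-1)}$.
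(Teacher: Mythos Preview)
Your overall strategy — explicit computation of the generic fibers via a change of variables replacing $v_0$ (resp.\ $u_0$) by the product $v_0\cdots v_{n-1}$ (resp.\ $u_0\cdots u_{n-1}$), combined with finiteness over $B$ and normality of $C^{(n-1)}$ — matches the paper's proof closely, and your well-definedness/finiteness paragraph is fine.

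There is, however, a genuine error in your injectivity argument. You claim that the images $\widetilde u_0^{\,n-1}$ and $\widetilde u_0/u_i$ generate $\mathrm{Frac}(C^{(n-1)})$, so that $f'$ is ``generically an isomorphism''. This is false whenever $g=\gcd(p-1,n-1)>1$. From $\widetilde u_0^{\,n-1}$ and $\widetilde u_0^{\,p-1}=p$ you can only recover $\widetilde u_0^{\,g}$, not $\widetilde u_0$ itself; the automorphism $u_i\mapsto \zeta u_i$ for $\zeta\in\mu_g$ fixes the subfield generated by your elements but moves $u_0$. Concretely, $\mathrm{Frac}(C_1')$ has degree $(p-1)/g$ over its rational-function subfield, while $\mathrm{Frac}(C^{(n-1)})$ has degree $p-1$, so the extension $\mathrm{Frac}(C_1')\hookrightarrow\mathrm{Frac}(C^{(n-1)})$ has degree $g$. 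This is why the lemma speaks of the normalization of $C_1'$ \emph{inside} $\mathrm{Frac}(C^{(n-1)})$, a genuinely larger field.

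The fix is simple and is what the paper does: once you have written $f'_{\Qp}$ explicitly as a map between the two Laurent-polynomial domains, observe that it sends each $v_i$ to a nonzero element, hence extends to a (necessarily injective) homomorphism of their fraction fields; since $C_1'$ is $\Zp$-flat it embeds in $C_{1,\Qp}'$, giving injectivity of $f'$. Your final sentence — $C^{(n-1)}$ is regular, hence normal, and finite over the image of $C_1'$, so it is the integral closure of $C_1'$ in $\mathrm{Frac}(C^{(n-1)})$ — then goes through unchanged.
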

\begin{proof}
    Both $C_1$ and $C^{(n-1)}$ are finite over the Iwahori local model $B$, so $f$ and hence $f'$ are finite. We now compute the generic fibers of $C_1'$ and $C^{(n-1)}$:
    \[
     C'_{1,\Qp}=C_{1,{\Qp}} \,\overset{(1)}{\cong}\, \frac{\Qp[x_1^\pm,\ldots, x_{n-1}^\pm,v_0,\ldots,v_{n-1}]}{\left(
        \substack{v_0^{p-1}-x_1\cdots x_{n-1}\\
         v_i^{p-1}-p/x_i\\
         (v_0\cdots v_{n-1})^{\frac{p-1}{g}}-\,p^{\frac{n-1}{g}}
                }
         \right)}
     \,\overset{(2)}{\cong} \,
    \frac{\Qp[v_0^\pm,\ldots, v_{n-1}^\pm]}{(v_0^{\frac{p-1}{g}}-p^{\frac{n-1}{g}})},
    \]
    where the two isomorphisms are given by
    \[
    x_0 \,\overset{(1)}{\longmapsto}\, p/x_1\cdots x_{n-1}, \;\;\;\;\; x_i\,\overset{(2)}{\longmapsto}\, p/v_i^{p-1} \;\;\;\;\; v_0\,\overset{(2)}{\longmapsto}\, v_0/v_1\cdots v_{n-1}.
    \]
    For $C^{(n-1)}$ we get:
    \[
     C^{(n-1)}_{\Qp}\,\overset{(1)}{\cong}\, \frac{\Qp[u_0^\pm,\ldots,u_{n-1}^\pm]}{((u_0\cdots u_{n-1})^{p-1}-p)} \,\overset{(2)}{\cong}\, \frac{\Qp[u_0^\pm,\ldots, u_{n-1}^\pm]}{(u_0^{p-1}-p)},
    \]
    where the two isomorphisms are given by
    \[
    x_i\,\overset{(1)}{\longmapsto}\, u_i^{p-1}, \;\;\;\; u_0\,\overset{(2)}{\longmapsto}\, u_0/u_1\cdots u_{n-1}
    \]
    Under these identifications, the map $f'$ becomes
    \begin{align*}
       \frac{\Qp[v_0^\pm,\ldots, v_{n-1}^\pm]}{(v_0^{\frac{p-1}{g}}-p^{\frac{n-1}{g}})} &\xrightarrow{\makebox[1cm]{$f'_{\Qp}$}} 
       \frac{\Qp[u_0^\pm,\ldots, u_{n-1}^\pm]}{(u_0^{p-1}-p)}\\
       v_0 &\longmapsto u_0^{n-1} \\
       v_i &\longmapsto u_0/u_i,\quad i>0.
    \end{align*}
    Since $f'$ is a map between integral domains which extends to their fraction fields, it is injective. Since $C^{(n-1)}$ is normal we get the final part of the lemma.
\end{proof}

\begin{remark} \label{monomials}
\begin{itemize}
    \item [$(i)$]From the calculations made in the proof of Lemma \ref{gen fibers} we see that $C_{1,\Qp}$ is normal, hence it contains the normalization $\Tilde{C}_1$ of $C'_1$. We have the following commutative diagram:
    \[
    \begin{tikzcd}
    C_1 \arrow[rrd, "f", bend left] \arrow[rdd, bend right] \arrow[rd, two heads] &  &\\
    & C_1' \arrow[r, "f'", hook] \arrow[d, hook] & C^{(n-1)} \arrow[d, hook] \\
    & {C_{1,\Qp}} \arrow[r, hook, "f_{\Qp}"] & C^{(n-1)}_{\Qp},      
    \end{tikzcd}
    \]
    where we write $f_{\Qp}$ instead of $f'_{\Qp}$ since they are the same. Moreover, since $C^{(n-1)}$ is normal we have that $\Tilde{C}_1=f_{\Qp}^{-1}(C^{(n-1)})$. Equivalently, $\Tilde{C}_1$ can be described as a subring of $C^{(n-1)}$, namely as the intersection $f_{\Qp}(C_{1,\Qp})\cap C^{(n-1)}$: this is the approach we are going to take. We are also going to use the presentation of $C^{(n-1)}$ as 
    \[
    C^{(n-1)}=\frac{\Zp[u_0,\ldots, u_{n-1}]}{((u_0\cdots u_{n-1})^{p-1}-p)}.
    \]
    \item [$(ii)$] For $i=0,\ldots, n-1$, let $a_i, b_i\in \Z$ and consider the monomial in $C_{1,\Qp}$ given by $\underline{x}^{\underline{a}}\underline{v}^{\underline{b}}=x_0^{a_0}\cdots x_{n-1}^{a_{n-1}}\cdot v_0^{b_0}\cdots v_{n-1}^{b_{n-1}}$. These monomials form a set of generators (as a $\Qp$-vector space) of $C_{1,\Qp}$ and $f_{\Qp}$ evaluated on them gives: 
    \begin{equation}
        f_{\Qp}(\underline{x}^{\underline{a}} \underline{v}^{\underline{b}})=u_0^{a_0(p-1)+\beta-b_0}\cdots u_{n-1}^{a_{n-1}(p-1)+\beta-b_{n-1}}.
    \end{equation}
    Here $\beta = \sum_i b_i$, and setting $\alpha=\sum_ia_i$ we see that the degree of $f(\underline{x}^{\underline{a}} \underline{v}^{\underline{b}})$ is $\alpha(p-1)+\beta(n-1)$, a multiple of $g$.
\end{itemize}
\end{remark}

\begin{proposition}\label{normalisation1}
    The normalization of $C'_1$ is the subring of $C^{(n-1)}$ generated over $\Zp$ by the monomials $\underline{u}^{\underline{c}}=u_0^{c_0}\cdots u_{n-1}^{c_{n-1}}$, where $\underline{c}\in \mathbb{Z}_{\geq 0}^n$ is such that $\sum_i c_i=g$.
\end{proposition}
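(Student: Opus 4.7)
The plan is to identify the normalization $\tilde{C}_1$ with the subring of $\mu_g$-invariants of $C^{(n-1)}$ under the action $\zeta\cdot u_i=\zeta u_i$ of $\mu_g\subset\Zp^\times$; this is a well-defined action since the relation $(u_0\cdots u_{n-1})^{p-1}=p$ is scaled by $\zeta^{n(p-1)}=1$ (using $g\mid p-1$). Once this identification is made, the claim reduces to the Veronese-type statement that the invariant subring is $\Zp$-algebra generated by the degree-$g$ monomials.

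The first key step is to compute the image of $f_{\Qp}$ in $C^{(n-1)}_{\Qp}$. Starting from the formula in Remark \ref{monomials}, a monomial $\underline{x}^{\underline{a}}\underline{v}^{\underline{b}}$ in $C_{1,\Qp}$ (now with $\underline{a},\underline{b}\in\Z^n$, since $x_i$ and $v_i$ are units generically) maps to a monomial in the $u_i$ of total degree $\alpha(p-1)+\beta(n-1)$, hence in $g\Z$. For the converse, given any $\underline{n}\in\Z^n$ with $\sum_i n_i\in g\Z$, I would solve the system $n_i=a_i(p-1)+\beta-b_i$ with $\beta=\sum_j b_j$ by first picking $\alpha,\beta\in\Z$ with $\alpha(p-1)+\beta(n-1)=\sum_i n_i$ (possible by Bezout, since $g=\gcd(p-1,n-1)$), then choosing $b_j\in\Z$ with $b_j\equiv\beta-n_j\pmod{p-1}$ and $\sum_j b_j=\beta$, and finally setting $a_j=(n_j+b_j-\beta)/(p-1)$. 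This realizes $\underline{u}^{\underline{n}}$ as an image and shows that $f_{\Qp}(C_{1,\Qp})$ is exactly the $\Qp$-span of monomials with $g$-divisible total degree, which is precisely the $\mu_g$-invariant subspace.

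Next I would intersect with $C^{(n-1)}$. I would exhibit $C^{(n-1)}$ as a free $\Zp$-module with basis the monomials $\underline{u}^{\underline{a}}$ satisfying $a_i\geq 0$ and $\min_i a_i<p-1$ (a standard basis, obtained by writing $\Zp[\underline{u}]$ as a free $\Zp[(u_0\cdots u_{n-1})^{p-1}]$-module). The condition $g\mid\sum_i a_i$ is stable under this basis normalization because shifting $\underline{a}$ by $(p-1,\ldots,p-1)$ changes the sum by $n(p-1)\in g\Z$. It follows that $\tilde{C}_1=f_{\Qp}(C_{1,\Qp})\cap C^{(n-1)}$ is the $\Zp$-span of monomials $\underline{u}^{\underline{a}}$ with $a_i\geq 0$ and $g\mid\sum_i a_i$, i.e. $(C^{(n-1)})^{\mu_g}$.

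Finally, to show that $(C^{(n-1)})^{\mu_g}$ is $\Zp$-algebra generated by the monomials $\underline{u}^{\underline{c}}$ with $\sum_i c_i=g$: given $\underline{a}\in\Z_{\geq 0}^n$ with $\sum_i a_i=kg$, one decomposes $\underline{a}=\underline{c}^{(1)}+\cdots+\underline{c}^{(k)}$ with each $\underline{c}^{(j)}\in\Z_{\geq 0}^n$ of sum $g$ by a greedy distribution of the entries, giving $\underline{u}^{\underline{a}}=\prod_j\underline{u}^{\underline{c}^{(j)}}$ inside $C^{(n-1)}$. I expect the main obstacle to be the Diophantine analysis in the first step: one needs not only the identity $g\Z=(p-1)\Z+(n-1)\Z$ but also a consistent choice of the $b_j$'s modulo $p-1$ subject to the global constraint $\sum_j b_j=\beta$, in order to realize arbitrary targets with $g$-divisible total degree in the image of $f_{\Qp}$.
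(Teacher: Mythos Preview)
Your proposal is correct and follows essentially the same route as the paper: identify $\tilde C_1$ with $f_{\Qp}(C_{1,\Qp})\cap C^{(n-1)}$, show the image consists of monomials of total degree divisible by $g$, and then intersect. Your $\mu_g$-invariant framing is a pleasant conceptual addition not made explicit in the paper. Two minor differences are worth noting. First, for the inclusion $(\supseteq)$ the paper is more economical: rather than realizing \emph{every} monomial of $g$-divisible degree in the image of $f_{\Qp}$, it only realizes the generators $\underline u^{\underline c}$ with $\sum c_i=g$, for which a single Bezout choice $g=\tau(p-1)+\sigma(n-1)$ and $b_i=a_i(p-1)+\sigma-c_i$ works uniformly; this sidesteps the system of congruences you flag as the main obstacle. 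Second, the paper asserts the equality $\Qp[\underline u^{\underline c}\mid \underline c\in\Z^n,\ \sum c_i=g]\cap C^{(n-1)}=\Zp[\underline u^{\underline c}\mid \underline c\in\Z_{\ge0}^n,\ \sum c_i=g]$ without further comment, whereas your free-basis argument (monomials with $\min_i a_i<p-1$, together with the observation that $g\mid n(p-1)$) actually justifies it.
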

\begin{proof}
    We want to show that $f_{\Qp}(C_{1,\Qp})\cap C^{(n-1)}=\Zp[\underline{u}^{\underline{c}} \,|\,\underline{c}\in \mathbb{Z}_{\geq 0}^n,\, \sum_ic_i=g]$ as a subring of $C^{(n-1)}$: we prove the two inclusions.
    \begin{itemize}
        \item [$(\subseteq)$] By Remark \ref{monomials}, $f_{\Qp}(C_{1,\Qp})$ is generated as a $\Qp$-vector space by monomials of degree divisible by $g$, therefore $f_{\Qp}(C_{1,\Qp})\subseteq\Qp[\underline{u}^{\underline{c}} \,|\, \underline{c}\in \Z^n ,\,\sum_ic_i=g]$. Now, we have that $\Qp[\underline{u}^{\underline{c}} \,|\,\underline{c}\in \Z^n ,\,\sum_ic_i=g] \cap C^{(n-1)}= \Zp[\underline{u}^{\underline{c}} \,|\, \underline{c}\in \Z_{\geq 0}^n ,\, \sum_ic_i=g]$ and this proves the first inclusion.

        \item [$(\supseteq)$] We have to show that any $\underline{u}^{\underline{c}}$ with $\sum_ic_i=g$ and $c_i\geq 0$ is the image, under $f_{\Qp}$, of some element of $C_{1,\Qp}$. Write $g=\tau(p-1)+\sigma(n-1)$: we need to find $a_i, b_i \in \Z$ such that $c_i=a_i(p-1)+ \beta-b_i$, with $\alpha=\sum_ia_i$ and $\beta=\sum_ib_i$. Choose $a_i$ in such a way that $\alpha=\tau$, and set $b_i=a_i(p-1)+\sigma -c_i$. Then $\beta=\tau(p-1)+n\sigma-g=\sigma$ and this proves the second inclusion.
    \end{itemize}
\end{proof} 

\begin{notation}
    For the sake of clarity, we will use the following notations:
    \begin{itemize}
        \item $A=\{\underline{c}\in \Z^n_{\geq 0} \,|\, \sum_ic_i=g \}$;
        \item $\underline{u}=(u_0\cdots u_{n-1})$, $\underline{u}^{\underline{d}}=u_0^{d_0}\cdots u_{n-1}^{d_{n-1}}$ for $\underline{d}\in \Z^n$;
        \item $V=\Zp[\underline{u}^{\underline{c}} \,|\, \underline{c}\in A]$, as a subring of $\Zp[u_0\ldots, u_{n-1}]$.
    \end{itemize}
\end{notation}

\begin{remark}
    \begin{itemize}
        \item [$(i)$] Proposition \ref{normalisation1} shows that if $p-1$ and $n-1$ are coprime then $C^{(n-1)}$ is the normalisation of $C^{(1)}$, which is a domain in this case.        
        \item[$(ii)$]Note that Proposition \ref{normalisation1} implies that $\Tilde{C}_1$ coincides with the image of the subring $V \subset \Zp[v_0\ldots, v_{n-1}]$ under the projection to the quotient:
        \[
        \begin{tikzcd}
            V=\Zp[\underline{u}^{\underline{c}} \,|\, \sum_ic_i=g] \arrow[r, hook] \arrow[d, ->>] & \Zp[u_0\ldots, u_{n-1}] \arrow[d, ->>, "\pi"]\\
            \Tilde{C}_1 \arrow[r, hook] & \scalebox{1.2}{$\frac{\Zp[u_0\ldots, u_{n-1}]}{(\underline{u}^{p-1}-p)}$}
        \end{tikzcd}
        \]
    \end{itemize}
\end{remark}

We are now going to describe a presentation for $V$: this is done in \cite[Theorem 14.2]{Stu} for polynomials with coefficients in any field. We first briefly explain this result, then show how it holds for coefficients in any ring and apply it for $\Zp$. Let $k$ be any field and consider the Veronese subring $k[\underline{u}^{\underline{c}} \,|\, \underline{c}\in A]\subset k[u_0,\ldots,u_{n-1}]$. Consider the polynomial ring $k[u_{\underline{c}} \,|\, \underline{c}\in A]$, where we are adding one formal variable $u_{\underline{c}}$ for each element of $A$. There is a surjective map
\begin{align*}
    \varphi: k[u_{\underline{c}} \,|\, \underline{c}\in &A]\twoheadrightarrow k[\underline{u}^{\underline{c}} \,|\, \underline{c}\in A] \subset k[u_0\ldots, u_{n-1}]\\
    &u_{\underline{c}}\longmapsto \underline{u}^{\underline{c}}
\end{align*}
whose kernel is a toric ideal $I_{A}$, in the notation of \cite[Section 14.A]{Stu} (note that our set $A$ is a special case of the set $\mathcal{A}$ considered there, given by $r=g$, $d=n$ and $s_1=\cdots s_n=g$) and Theorem 14.2 of loc. cit. gives a reduced Gr{\"o}bner basis for $I_A$. However, Sturmfels uses a different notation for these monomials, which we now translate to our setting. First, there is a bijection, that we call str, between $A$ and the set of weakly increasing strings of length $g$ on the alphabet $\{1,\ldots, n\}$, under which the vector $\underline{c}=(c_0,\ldots, c_{n-1})\in A$ is mapped to the string
\begin{align*}
    \text{str}(\underline{c})=(\:&11\cdots 1\,&&22\cdots2\; &\cdots\; &&nn\cdots n)\\
    &c_0 \text{\footnotesize -times} &&c_1 \text{\footnotesize-times}&  &&c_{n-1}\text{\footnotesize-times}
\end{align*}
If $\text{str}(\underline{c})=z_1\cdots z_g$, write $u_{z_1\cdots z_g}$ for the monomial $u_{\underline{c}}$. Denote by sort the function which takes any string on the alphabet $\{1,\cdots, n\}$ and orders it in weakly increasing order. With these notations, the set of generators for $I_A$ given in the above cited theorem is:
\[
\{u_{a_1\cdots a_g}u_{b_1\cdots b_g}-u_{d_1\cdots d_g}u_{e_1\cdots e_g} \,|\, d_1e_1\cdots d_ge_g=\text{sort}(a_1b_1\cdots a_gb_g)\} 
\]

\begin{notation}\label{notation}
    We are going to use the following notation.
    \begin{itemize}
        \item Given two strings $a=a_1\cdots a_g$ and $b=b_1\cdots b_g$, write $a+b$ for the string $a_1b_1\cdots a_gb_g$.

        \item Given a string $w=w_1\cdots w_{2g}$, set odd$(w)=w_1w_{3}\cdots w_{2g-1}$ and even$(w)=w_2w_4\cdots w_{2g}$.

        \item Given $\underline{c}_1, \underline{c}_2\in A$, by writing $(\underline{c}_3, \underline{c}_4)=\text{sort}(\underline{c}_1, \underline{c}_2)$ we mean that
        \begin{itemize}
            \item[] $\underline{c}_3=\text{str}^{-1}\circ\text{odd}\circ \text{sort}(\text{str}(\underline{c}_1)+\text{str}(\underline{c}_2))$,

            \item[] $\underline{c}_4=\text{str}^{-1}\circ\text{even}\circ \text{sort}(\text{str}(\underline{c}_1)+\text{str}(\underline{c}_2))$. 
        \end{itemize}
    \end{itemize}
\end{notation}

\begin{example}
    Let us make an example to illustrate this notation. Suppose that $n=3$ and $r=1$, so that $g=\text{gcd}(n-r,p-1)=2$ and therefore $A=\{ \underline{c}\in\Z_{\geq 0}^3 \,|\, \sum_i c_i=2\}$. Pick the vectors $\underline{c}_1=(1,1,0)$ and $\underline{c}_2=(0,0,2)$, corresponding to the strings $\text{str}(\underline{c}_1)=(12)$ and $\text{str}(\underline{c}_2)=(33)$ on the alphabet $\{1,2,3\}$, so that $\text{str}(\underline{c}_1)+\text{str}(\underline{c}_2)=(1233)$, which is already sorted. Then we have that
    \begin{itemize}
        \item[$(a)$] $\underline{c}_3=\text{str}^{-1}\circ\text{odd}\circ \text{sort}(\text{str}(\underline{c}_1)+\text{str}(\underline{c}_2))=\text{str}^{-1}(13)=(1,0,1)$,

        \item[$(b)$] $\underline{c}_4=\text{str}^{-1}\circ\text{even}\circ \text{sort}(\text{str}(\underline{c}_1)+\text{str}(\underline{c}_2))=\text{str}^{-1}(23)=(0,1,1)$,
    \end{itemize}
    thus $((1,0,1),(0,1,1))=\text{sort}((1,1,0),(0,0,2))$.
\end{example}

We are going to need the next lemma later.

\begin{lemma}\label{sort}
    We have that $\emph{sort}(\underline{c}_1, \underline{c}_2)=\emph{sort}(\underline{c}_3, \underline{c}_4)$ if and only if $\underline{c}_1+\underline{c}_2=\underline{c}_3+\underline{c}_4$. In particular, if $(\underline{c}_3, \underline{c}_4)=\emph{sort}(\underline{c}_1, \underline{c}_2)$ then $\underline{c}_1+\underline{c}_2=\underline{c}_3+\underline{c}_4$.
\end{lemma}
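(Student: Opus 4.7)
The plan is to reduce the statement to one multiset-invariance observation and then read off both directions from it. The central dictionary is that $\text{str}$ encodes a vector as a multiset of letters: by construction, for $\underline{c} = (c_0, \ldots, c_{n-1}) \in A$ the string $\text{str}(\underline{c})$ contains the letter $i+1$ with multiplicity $c_i$. Therefore the concatenation $\text{str}(\underline{c}_1) + \text{str}(\underline{c}_2)$ is a length-$2g$ string whose multiset of letters corresponds precisely to the vector $\underline{c}_1 + \underline{c}_2$, and conversely knowing this multiset determines the vector sum.

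From this dictionary I would derive the key invariant: whenever $(\underline{a}, \underline{b}) = \text{sort}(\underline{c}_1, \underline{c}_2)$ in the sense of Notation \ref{notation}, one has $\underline{a} + \underline{b} = \underline{c}_1 + \underline{c}_2$. The reason is that the sorted string is just a rearrangement of $\text{str}(\underline{c}_1) + \text{str}(\underline{c}_2)$ (same multiset), and splitting its $2g$ positions into the odd-indexed and even-indexed sub-strings just redistributes those letters between $\text{str}(\underline{a})$ and $\text{str}(\underline{b})$; adding the two multisets recovers the total multiset, i.e.\ $\underline{c}_1 + \underline{c}_2$.

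With this invariant in hand the equivalence is immediate. For the forward direction, $\text{sort}(\underline{c}_1, \underline{c}_2) = \text{sort}(\underline{c}_3, \underline{c}_4) = (\underline{a}, \underline{b})$ forces $\underline{c}_1 + \underline{c}_2 = \underline{a} + \underline{b} = \underline{c}_3 + \underline{c}_4$. For the backward direction, if $\underline{c}_1 + \underline{c}_2 = \underline{c}_3 + \underline{c}_4$, then the two concatenated strings have the same multiset of letters, hence the same sorted rearrangement, hence the same odd/even split, so $\text{sort}(\underline{c}_1, \underline{c}_2) = \text{sort}(\underline{c}_3, \underline{c}_4)$. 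The \emph{in particular} statement is nothing but the invariant itself applied to the pair $(\underline{c}_1, \underline{c}_2)$ with output $(\underline{c}_3, \underline{c}_4)$.

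I do not expect any genuine obstacle: the whole argument is purely combinatorial bookkeeping, and once the $\text{str}$-as-multiset identification is made explicit, the rest is automatic. The only minor care is to keep the two uses of ``sort'' straight (one acting on a string, the other on a pair of vectors via the odd/even procedure of Notation \ref{notation}), since the invariance argument passes back and forth between them.
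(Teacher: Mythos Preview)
Your proposal is correct and follows essentially the same idea as the paper: both arguments rest on the observation that $\text{str}(\underline{c}_1)+\text{str}(\underline{c}_2)$ records the multiset corresponding to the vector sum $\underline{c}_1+\underline{c}_2$, so equality of the sorted concatenations is equivalent to equality of the sums. The only cosmetic difference is the order of deduction: you establish the invariant $\underline{a}+\underline{b}=\underline{c}_1+\underline{c}_2$ for $(\underline{a},\underline{b})=\text{sort}(\underline{c}_1,\underline{c}_2)$ first and read off everything from it, whereas the paper proves the biconditional directly and then obtains the ``in particular'' by invoking idempotence of the sort map on $A\times A$.
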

\begin{proof}
    First, note that we have the following equivalence:
    \[\text{sort}(\underline{c}_1, \underline{c}_2)=\text{sort}(\underline{c}_3, \underline{c}_4) \quad\Longleftrightarrow\quad  \text{sort}(\text{str}(\underline{c}_1)+\text{str}(\underline{c}_2))=\text{sort}(\text{str}(\underline{c}_3)+\text{str}(\underline{c}_4)).
    \]
    Moreover, for $i=0,\ldots, n-1$, the number of occurrences of the symbol $i$ in the string $\text{sort}(\text{str}(\underline{c}_1)+\text{str}(\underline{c}_2))$, respectively in $\text{sort}(\text{str}(\underline{c}_3)+\text{str}(\underline{c}_4))$, is precisely $\underline{c}_1(i)+\underline{c}_2(i)$, respectively $\underline{c}_3(i)+\underline{c}_4(i)$. The first part of the lemma follows thanks to the fact that $\text{sort}(\text{str}(\underline{c}_1)+\text{str}(\underline{c}_2))=\text{sort}(\text{str}(\underline{c}_3)+\text{str}(\underline{c}_4))$ if and only if $\underline{c}_1(i)+\underline{c}_2(i)=\underline{c}_3(i)+\underline{c}_4(i)$ for all $ i=0,\ldots,n-1$. The second part follows from the fact that the sort function on $A\times A$ is idempotent, so we can apply it to the equality $(\underline{c}_3, \underline{c}_4)=\text{sort}(\underline{c}_1, \underline{c}_2)$ and then use the first part of the lemma.
\end{proof}

\begin{proposition}\label{presentation I}
 Let $R$ be any ring. The kernel $I_A$ of the map
\begin{align*}
    \varphi_R: R[u_{\underline{c}} \,|\, \underline{c}\in &A]\twoheadrightarrow R[\underline{u}^{\underline{c}} \,|\, \underline{c}\in A] \subset R[u_0\ldots, u_{n-1}]\\
    &u_{\underline{c}}\longmapsto \underline{u}^{\underline{c}}
\end{align*}
    admits the following set of generators (as an ideal): $\{u_{\underline{c}_1}u_{\underline{c}_2}-u_{\underline{c}_3}u_{\underline{c}_4} \,|\, (\underline{c}_3, \underline{c}_4)=\emph{sort}(\underline{c}_1, \underline{c}_2)\}.$
\end{proposition}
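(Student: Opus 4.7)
The plan is to reduce the statement to the case $R = \mathbb{Z}$ and then invoke (the proof of) Sturmfels' Theorem \cite[Theorem 14.2]{Stu} over $\mathbb{Z}$: his Gr\"obner basis argument runs verbatim integrally because every binomial generator has leading coefficient $\pm 1$. Write $J_R \subseteq R[u_{\underline{c}}]$ for the ideal generated by the stated binomials. The inclusion $J_R \subseteq I_A$ is immediate from Lemma \ref{sort}, so everything reduces to the reverse inclusion.

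For the reduction to $\mathbb{Z}$, I would first observe that the Veronese subring $V_\mathbb{Z} := \mathbb{Z}[\underline{u}^{\underline{c}} \mid \underline{c}\in A]$ is a free $\mathbb{Z}$-module, with basis given by those monomials $\underline{u}^{\underline{a}} \in \mathbb{Z}[u_0, \ldots, u_{n-1}]$ for which $\sum_i a_i$ is a non-negative multiple of $g$ (every such monomial is a product of elements indexed by $A$, and conversely each $\underline{u}^{\underline{c}}$ with $\underline{c}\in A$ has degree $g$). In particular $V_\mathbb{Z}$ is $\mathbb{Z}$-flat, so the defining short exact sequence $0 \to I_{A,\mathbb{Z}} \to \mathbb{Z}[u_{\underline{c}}] \to V_\mathbb{Z} \to 0$ stays exact on tensoring with $R$, yielding $I_{A,R} = I_{A,\mathbb{Z}} \otimes_\mathbb{Z} R$. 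Tautologically $J_R = J_\mathbb{Z} \otimes_\mathbb{Z} R$ since the generators are integer polynomials, so it suffices to prove the proposition for $R = \mathbb{Z}$.

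For the integer case, I would adopt Sturmfels' terminology: call a monomial $u_{\underline{c}_1} \cdots u_{\underline{c}_k}$ \emph{sorted} if every consecutive pair satisfies $(\underline{c}_i, \underline{c}_{i+1}) = \mathrm{sort}(\underline{c}_i, \underline{c}_{i+1})$. The defining relation of $J_\mathbb{Z}$ rewrites an unsorted pair into a sorted one, and Sturmfels' termination argument shows every monomial reduces modulo $J_\mathbb{Z}$ to a sorted one, so sorted monomials $\mathbb{Z}$-span $\mathbb{Z}[u_{\underline{c}}]/J_\mathbb{Z}$. On the other hand, sorted monomials map bijectively onto the basis of $V_\mathbb{Z}$ described above: given $\underline{u}^{\underline{a}}$ with $\sum_i a_i = kg$, its unique sorted preimage is obtained by de-interleaving the weakly increasing string $\mathrm{sort}(1^{a_0}\cdots n^{a_{n-1}})$ of length $kg$ into $k$ subsequences of length $g$ (positions $i, k+i, 2k+i, \ldots$ form the $i$-th block). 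This makes $\varphi_\mathbb{Z}$ descend to an isomorphism $\mathbb{Z}[u_{\underline{c}}]/J_\mathbb{Z} \xrightarrow{\sim} V_\mathbb{Z}$, giving $I_{A,\mathbb{Z}} = J_\mathbb{Z}$ as desired. The only (mild) obstacle is verifying that Sturmfels' Gr\"obner basis argument, originally stated over a field, transfers integrally; this is automatic from the unit leading coefficients, which make division over $\mathbb{Z}$ require no inversion and preserve all the combinatorial bookkeeping of the sorting rewrite system.
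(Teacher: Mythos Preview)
Your proof is correct and takes a genuinely different route from the paper's. The paper first shows, by a leading-term argument, that $\ker(\varphi_R)$ is $R$-linearly generated by ``pure'' binomials $\prod u_{\underline{c}}^{a_{\underline{c}}}-\prod u_{\underline{c}}^{b_{\underline{c}}}$ with matching images; it then uses the finitely many coefficients appearing in such an expression to reduce to a polynomial ring $\mathbb{Z}[D]$, passes to its fraction field to invoke Sturmfels, and finally appeals to \cite[Lemma~3.8]{Herz} to descend the resulting identity back to $\mathbb{Z}[D]$. Your argument bypasses all of this: the freeness of $V_{\mathbb{Z}}$ over $\mathbb{Z}$ immediately gives $I_{A,R}$ as the image of $I_{A,\mathbb{Z}}\otimes R$, reducing to $R=\mathbb{Z}$ in one step, and then you run Sturmfels' sorting/standard-monomial argument integrally rather than over a field. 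What you gain is economy and the avoidance of the extra citation to \cite{Herz}; what the paper's approach offers is a cleaner separation between the general toric fact (pure binomials generate any such kernel) and the specific Gr\"obner structure of this particular ideal. One small remark: when you write ``tautologically $J_R=J_{\mathbb{Z}}\otimes_{\mathbb{Z}}R$'', what you actually use (and what is immediate) is that $J_R$ is the \emph{image} of $J_{\mathbb{Z}}\otimes R$ in $R[u_{\underline{c}}]$, which is all the argument needs.
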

\begin{proof}
    We will first give a set of generators of ker$(\varphi_R)$ as an $R$-module, then use it to reduce the question to the case where $R$ is an integral domain, more precisely a polynomial ring over $\Z$, which we can prove by passing to the fraction field. Consider the map of sets
        \begin{align*}
            \rho:\Z^{A} &\longrightarrow \Z^n\\
            (a_{\underline{c}})_{\underline{c}\in A}&\longmapsto \sum_{\underline{c}\in A}a_{\underline{c}}\cdot \underline{c}
        \end{align*}
        We have that $\varphi_R(\prod_A u_{\underline{c}}^{a_{\underline{c}}})=\prod_A \underline{u}^{a_{\underline{c}}\cdot \underline{c}}=\underline{u}^{\rho(a_{\underline{c}})}$, where we write $(a_{\underline{c}})$ instead of $(a_{\underline{c}})_{\underline{c}\in A}$, see Notation \ref{notation}.\\
        \begin{claim}
            $\text{ker}(\varphi_R)=\langle \, \prod_A u_{\underline{c}}^{a_{\underline{c}}} - \prod_A u_{\underline{c}}^{b_{\underline{c}}} \;|\; \rho(a_{\underline{c}})=\rho(b_{\underline{c}}) \, \rangle$ as an $R$-module.
        \end{claim}
        \begin{proofc}
            Write $X_R=\langle \, \prod_A u_{\underline{c}}^{a_{\underline{c}}} - \prod_A u_{\underline{c}}^{b_{\underline{c}}} \;|\; \rho(a_{\underline{c}})=\rho(b_{\underline{c}}) \, \rangle_R$. The inclusion $\text{ker}(\varphi_R)\supseteq X_R$ is clear, so we prove the other one. Let $f\in \text{ker}(\varphi_R)$ and write in$(f)=r\cdot \prod u_{\underline{c}}^{a_{\underline{c}}}$ for the highest monomial of $f$ with respect to the lexicographic order on $A$ (i.e. $u_{\underline{c}} \leq u_{\underline{c}'}$ if $\underline{c}\leq \underline{c}'$ lexicographically). Suppose by contradiction that $f$ does not lie in $X_R$ and choose it with minimal initial term in$(f)$ (with respect to the lexicographic order). In the expansion of $\varphi_R(f)$ as a $R$-linear combination of monomials there is a term of the form $r\cdot u_{\underline{c}}^{\rho(a_{\underline{c}})}$ and, since $\varphi_R(f)=0$, $f$ has another monomial of the form $s\cdot \prod u_{\underline{c}}^{b_{\underline{c}}}$ such that $\rho(a_{\underline{c}})=\rho(b_{\underline{c}})$. Now consider $f'=f-r\cdot (\prod u_{\underline{c}}^{a_{\underline{c}}} - \prod u_{\underline{c}}^{b_{\underline{c}}})$: it lies in $\text{ker}(\varphi_R)$ but not in $X_R$ and in$(f')<\,$in$(f)$. This is a contradiction and the claim is proved. 
        \end{proofc}

        We now reduce to the case of a domain. Let $f\in \text{ker}(\varphi_R)$ and write it as an $R$-linear combination of binomials as in the claim: $f=\sum r_{(a_{\underline{c}},b_{\underline{c}})}(\prod_A u_{\underline{c}}^{a_{\underline{c}}} - \prod_A u_{\underline{c}}^{b_{\underline{c}}})$, where the set $D=\{((a_{\underline{c}}),(b_{\underline{c}}))\in \Z^A\times \Z^A \,|\, r_{(a_{\underline{c}},b_{\underline{c}})}\neq 0\}$ is finite. For clarity's sake, write $\Z[D]=\Z[x_{(a_{\underline{c}},b_{\underline{c}})} \,|\, ((a_{\underline{c}}),(b_{\underline{c}}))\in D]$ for the polynomial ring with one variable for each element of $D$. Consider the following commutative diagram:
        \[
        \begin{tikzcd}
            \text{ker}(\varphi_R)\subset R[u_{\underline{c}} \,|\, \underline{c}\in A] \arrow[r, "\varphi_R"] & R[\underline{u}^{\underline{c}} \,|\, \underline{c}\in A]\\
            \text{ker}(\varphi_{\Z[D]})\subset \Z[D][u_{\underline{c}} \,|\, \underline{c}\in A] \arrow[u, "\tau"] \arrow[r, "\varphi_{\Z[D]}"] &\Z[D][\underline{u}^{\underline{c}} \,|\, \underline{c}\in A] \arrow[u],
        \end{tikzcd}
        \]
        where the vertical maps are induced by the natural map $\Z[D]\rightarrow R$ sending $x_{(a_{\underline{c}},b_{\underline{c}})}$ to $r_{(a_{\underline{c}},b_{\underline{c}})}$. Let $f'=\sum x_{(a_{\underline{c}},b_{\underline{c}})}(\prod_A u_{\underline{c}}^{a_{\underline{c}}} - \prod_A u_{\underline{c}}^{b_{\underline{c}}}):$ it is an element of ker$(\varphi_{\Z[D]})$ which maps to $f$. Assuming that the proposition holds for integral domains, we can write $f'=\sum r'_{\underline{c}_i}(u_{\underline{c}_1}u_{\underline{c}_2}-u_{\underline{c}_3}u_{\underline{c}_4})$ for suitable $r'_{\underline{c}_i}\in \Z[D][u_{\underline{c}} \,|\, \underline{c}\in A]$ with $(\underline{c}_3, \underline{c}_4)=\text{sort}(\underline{c}_1, \underline{c}_2)$. Then $f=\tau(f')=\sum \tau(r'_{\underline{c}_i})(u_{\underline{c}_1}u_{\underline{c}_2}-u_{\underline{c}_3}u_{\underline{c}_4})$ is also generated by the binomials as in the statement of the proposition. This shows that $\text{ker}(\varphi)$ is contained in the ideal generated by $\{ u_{\underline{c}_1}u_{\underline{c}_2}-u_{\underline{c}_3}u_{\underline{c}_4} \,|\, (\underline{c}_3, \underline{c}_4)=\text{sort}(\underline{c}_1, \underline{c}_2)\}$, while the other inclusion follows from the fact that if $(\underline{c}_3, \underline{c}_4)=\text{sort}(\underline{c}_1, \underline{c}_2)$ then $\underline{c}_1+\underline{c}_2=\underline{c}_3+\underline{c}_4$, by Lemma \ref{sort}.\\        
        Now assume that $R$ is an integral domain and write $K$ for its fraction field: by \cite[Theorem 14.2]{Stu} we know that the proposition holds for $\varphi_K$. Clearly  any binomial $\prod_A u_{\underline{c}}^{a_{\underline{c}}} - \prod_A u_{\underline{c}}^{b_{\underline{c}}}$ of $X_R$ belongs to $\text{ker}(\varphi_{K})$ and we can apply \cite[Lemma 3.8]{Herz} to write it in the form
        \[
        \prod_A u_{\underline{c}}^{a_{\underline{c}}} - \prod_A u_{\underline{c}}^{b_{\underline{c}}} = \sum_{k=1}^s \pm \textbf{w}f_k,
        \]
        where \textbf{w} is a monomial in the variables $u_{\underline{c}}$ and $f_k$ are suitable elements of  $\{u_{\underline{c}_1}u_{\underline{c}_2}-u_{\underline{c}_3}u_{\underline{c}_4} \}_{sort}$: this implies that $\text{ker}(\varphi)$ is contained in the ideal generated by $\{u_{\underline{c}_1}u_{\underline{c}_2}-u_{\underline{c}_3}u_{\underline{c}_4} \}_{sort}$ and the other inclusion follows as in the previous paragraph.
\end{proof}

Now that we have a presentation for the ring $V$, we can use it to deduce one for $\Tilde{C}_1$ by determining a set of generators of the kernel of the map $\psi$ defined in the following diagram:
\[
\begin{tikzcd}
    \scalebox{1.2}{$\frac{\Zp[u_{\underline{c}} \,|\, \underline{c}\in A]}{(u_{\underline{c}_1}u_{\underline{c}_2}-u_{\underline{c}_3}u_{\underline{c}_4})_{sort}}$} \arrow[r, "\cong"] \arrow[rd, swap, "\psi"] &\Zp[\underline{u}^{\underline{c}} \,|\, \sum_ic_i=g] \arrow[d, ->>, "\pi"]\\
    &\Tilde{C}_1\;.
\end{tikzcd}
\]
We first determine the kernel of the projection $\pi$.

\begin{lemma}
    The kernel of $\pi$ is the ideal $(\underline{u}^{p-1}-p)V\subset V$.
\end{lemma}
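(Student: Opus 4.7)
The plan is to observe first that the inclusion $(\underline{u}^{p-1}-p)V\subseteq \ker(\pi)$ is immediate, since $\underline{u}^{p-1}-p$ maps to zero in $C^{(n-1)}$; the real content is the reverse inclusion. My approach is to exploit the natural $\Z/g\Z$-grading on the polynomial ring $R:=\Zp[u_0,\ldots,u_{n-1}]$ given by total degree modulo $g$.

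The first preparatory step is to identify $V$ explicitly as a $\Zp$-module: I claim it is precisely the $\Zp$-span of the monomials $\underline{u}^{\underline{d}}$ with $\underline{d}\in \Z_{\geq 0}^n$ and $\sum_i d_i\equiv 0 \pmod{g}$. One direction is clear since any product $\underline{u}^{\underline{c}_1}\cdots\underline{u}^{\underline{c}_k}$ with $\underline{c}_j\in A$ has total degree $kg$. For the other direction, given any $\underline{d}\in\Z_{\geq 0}^n$ with $\sum_i d_i = kg$, the string $\mathrm{str}(\underline{d})$ has length $kg$ and can be chopped into $k$ consecutive substrings of length $g$, each corresponding to an element of $A$; this exhibits $\underline{d}$ as a sum $\underline{c}_1+\cdots+\underline{c}_k$ with $\underline{c}_j\in A$, so $\underline{u}^{\underline{d}}\in V$. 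In other words $V$ is exactly the degree-$0$ component $R_0$ of $R$ under the $\Z/g\Z$-grading.

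The second observation is that the element $\underline{u}^{p-1}-p$ itself lies in $R_0$: the monomial $(u_0\cdots u_{n-1})^{p-1}$ has total degree $n(p-1)$, which is divisible by $g$ because $g\mid p-1$, and the constant $p$ trivially has degree $0$.

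The heart of the argument is then the following. Take $f\in V\cap (\underline{u}^{p-1}-p)R$, write $f=(\underline{u}^{p-1}-p)h$ for some $h\in R$, and decompose $h=h_0+h_1+\cdots+h_{g-1}$ into its mod-$g$ homogeneous components. Since $\underline{u}^{p-1}-p$ is homogeneous of degree $0$, each product $(\underline{u}^{p-1}-p)h_i$ is homogeneous of degree $i$; on the other hand $f\in V=R_0$ is purely of degree $0$. Matching components gives $(\underline{u}^{p-1}-p)h_i=0$ for all $i\neq 0$, and since $R$ is a domain this forces $h_i=0$ for $i\neq 0$. Hence $h=h_0\in R_0=V$, so $f\in (\underline{u}^{p-1}-p)V$, completing the proof.

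I do not foresee a serious obstacle: once the identification $V=R_0$ and the mod-$g$ homogeneity of $\underline{u}^{p-1}-p$ are in place, the argument is just a grading/integrality check. The only point that requires a brief justification is the combinatorial splitting of $\underline{d}$ into summands from $A$, which is handled cleanly via the string representation $\mathrm{str}$ introduced before Notation \ref{notation}.
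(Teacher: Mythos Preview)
Your proof is correct. Both your argument and the paper's rest on the same underlying observation, namely that $V$ consists precisely of those elements of $R=\Zp[u_0,\ldots,u_{n-1}]$ supported on monomials of total degree divisible by $g$, and that $\underline{u}^{p-1}-p$ is itself such an element. The paper, however, does not make this identification $V=R_0$ explicit; instead, given $f=(\underline{u}^{p-1}-p)h\in V$, it argues monomial by monomial: for each monomial $\underline{u}^{\underline{b}}$ appearing in $h$, it locates a ``maximal'' shift $\underline{u}^{\underline{b}+m(p-1)}$ that survives uncancelled in the expansion of $f$, concludes that this surviving monomial lies in $V$, and deduces $g\mid\sum_i b_i$. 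Your route is more structural: once $V=R_0$ is established and $\underline{u}^{p-1}-p$ is seen to be homogeneous of degree $0$ for the $\Z/g\Z$-grading, the conclusion that $h\in V$ drops out immediately from comparing graded pieces and using that $R$ is a domain. The gain is conceptual clarity and brevity; the paper's argument, by contrast, avoids having to first prove the identification $V=R_0$ (your combinatorial chopping of $\mathrm{str}(\underline{d})$), trading a short preliminary lemma for a slightly more hands-on endgame.
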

\begin{proof}
    By definition, $\text{ker}(\pi)= V \cap (\underline{u}^{p-1}-p)$, where $(\underline{u}^{p-1}0p)$ is an ideal of $\Zp[u_i]$. As $\underline{u}^{p-1}-p \in V$, we see that $\text{ker}(\pi)\supseteq (\underline{u}^{p-1}-p)V$. For the other inclusion, let $(\underline{u}^{p-1}-p)f$ be an element of the kernel of $\pi$, for some $f=\sum_{\underline{b}\in B} a_{\underline{b}}\,\underline{u}^{\underline{b}}\;$, with $a_{\underline{b}}\neq 0$ for each $\underline{b}\in B$. Write 
    \begin{equation}\label{prod}
        (\underline{u}^{p-1}-p)f = \sum_{\underline{b}\in B} a_{\underline{b}}\,\underline{u}^{\underline{b}+(p-1)} - \sum_{\underline{b}\in B} p\cdot a_{\underline{b}}\,\underline{u}^{\underline{b}},
    \end{equation}    
     where $\underline{b}+(p-1)=(b_0+p-1,\ldots, b_{n-1}+p-1)$. We have to show that $f\in V$, i.e. that $g$ divides $\sum_i b_i$ for any $\underline{b}\in B$: fix such a $\underline{b}$. Let $m$ be the positive integer such that $\underline{b}+(m-1)(p-1)$ lies in $B$ but $\underline{b}+m(p-1)$ does not (it exists as $B$ is a finite set). Then $\underline{u}^{\underline{b}+m(p-1)}$ appears in the first summand of (\ref{prod}) but not in the second, so it appears in the expression of $(\underline{u}^{p-1}-p)f$ as a linear combination of monomials. Since $(\underline{u}^{p-1}-p)f$ belongs to $V$, $g$ divides the total degree of $\underline{u}^{\underline{b}+m(p-1)}$, which is $(\sum_ib_i)+mn(p-1)$, and therefore it also divides $\sum_i b_i$.
\end{proof}

To conclude, we need to find which element of $ \frac{\Zp[u_{\underline{c}} \,|\, \underline{c}\in A]}{(u_{\underline{c}_1}u_{\underline{c}_2}-u_{\underline{c}_3}u_{\underline{c}_4} )_{sort}}$ corresponds to $(u_0\cdots u_{n-1})^{p-1}-p$. Write $p-1=g\cdot h$ and consider the following elements of $A$:
\[
\underline{c}^{(i)}=(0,\ldots, g,\ldots, 0) \;\; \text{for} \; i=0,\ldots, n-1,
\]
where the only non zero entry is the $i$-th. Then $u_{\underline{c}^{(i)}}$ corresponds to $u_i^g$ inside $\Zp[u_0,\ldots, u_{n-1}]$ and hence $(u_{\underline{c}^{(0)}} \cdots u_{\underline{c}^{(n-1)}})^{h}-p$ corresponds to $(u_0\cdots u_{n-1})^{p-1}-p$. Therefore we get the following theorem.

\begin{theorem}\label{normalization C1}
    The normalization of one of the irreducible components of the pro-p Iwahori local model in the case of signature (1, n-1) is the spectrum of the ring
    \[
    \frac{\Zp[u_{\underline{c}} \,|\, \underline{c}\in A]}{\begin{pmatrix}
        u_{\underline{c}_1}u_{\underline{c}_2}-u_{\underline{c}_3}u_{\underline{c}_4} \,|\, (\underline{c}_3, \underline{c}_4)=\emph{sort}(\underline{c}_1, \underline{c}_2)\\
        (u_{\underline{c}^{(0)}} \cdots u_{\underline{c}^{(n-1)}})^{h}-p
    \end{pmatrix}}.
    \]
\end{theorem}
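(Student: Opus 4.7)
The proof at this point is essentially a bookkeeping step that assembles the pieces established in the preceding paragraphs. The plan is to present $\tilde C_1$ as the image of $V$ in the quotient, use the explicit presentation of $V$ from Proposition~\ref{presentation I}, and then quotient by the kernel of the projection $\pi$, which is described by the immediately preceding lemma.

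First, I would combine Proposition~\ref{normalisation1} with the diagram introduced just before the lemma on $\ker\pi$: this gives a surjection
\[
\psi \colon \frac{\Zp[u_{\underline{c}} \,|\, \underline{c}\in A]}{(u_{\underline{c}_1}u_{\underline{c}_2}-u_{\underline{c}_3}u_{\underline{c}_4})_{\mathrm{sort}}} \xlongrightarrow{\cong} V \xlongrightarrow{\pi} \tilde{C}_1,
\]
where the first arrow is Proposition~\ref{presentation I} applied with $R=\Zp$, and $\pi$ is the restriction of the quotient map $\Zp[u_0,\ldots,u_{n-1}]\twoheadrightarrow C^{(n-1)}$ to $V$. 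Thus $\tilde C_1$ is isomorphic to this polynomial quotient modulo $\psi^{-1}(0)$, and by the previous lemma $\ker\psi$ is nothing but the preimage of $(\underline{u}^{p-1}-p)V$ under the isomorphism with $V$.

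Second, I would translate the generator $\underline{u}^{p-1}-p$ back to the variables $u_{\underline{c}}$. For the specific vectors $\underline{c}^{(i)}=(0,\ldots,g,\ldots,0)\in A$ with $g$ in the $i$-th slot, the isomorphism $V\cong \Zp[u_{\underline{c}}]/I_A$ sends $u_{\underline{c}^{(i)}}$ to $u_i^{g}$. Writing $p-1=gh$, one then has
\[
(u_{\underline{c}^{(0)}}\cdots u_{\underline{c}^{(n-1)}})^{h}\;\longmapsto\; (u_0^{g}\cdots u_{n-1}^{g})^{h}= (u_0\cdots u_{n-1})^{p-1},
\]
so $(u_{\underline{c}^{(0)}}\cdots u_{\underline{c}^{(n-1)}})^{h}-p$ is a preimage of $\underline{u}^{p-1}-p$ under $\psi$. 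Combined with the previous step this identifies $\tilde{C}_1$ with the ring in the statement, except possibly for the replacement of the generating relations $(\underline{c}_3,\underline{c}_4)=\mathrm{sort}(\underline{c}_1,\underline{c}_2)$ by the weaker relations $\underline{c}_1+\underline{c}_2=\underline{c}_3+\underline{c}_4$ used in the introduction.

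Finally, I would verify that these two ways of writing the ideal coincide. Lemma~\ref{sort} says that $\underline{c}_1+\underline{c}_2=\underline{c}_3+\underline{c}_4$ is equivalent to $\mathrm{sort}(\underline{c}_1,\underline{c}_2)=\mathrm{sort}(\underline{c}_3,\underline{c}_4)$, so letting $(\underline{d}_1,\underline{d}_2)$ denote this common sort, the binomial $u_{\underline{c}_1}u_{\underline{c}_2}-u_{\underline{c}_3}u_{\underline{c}_4}$ is the difference of the two sort-type binomials $u_{\underline{c}_1}u_{\underline{c}_2}-u_{\underline{d}_1}u_{\underline{d}_2}$ and $u_{\underline{c}_3}u_{\underline{c}_4}-u_{\underline{d}_1}u_{\underline{d}_2}$, hence lies in the sort-ideal; the reverse inclusion is trivial. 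Since no real obstacle is expected, the only mildly delicate point is keeping the isomorphism $V\cong \Zp[u_{\underline{c}}]/I_A$ and the identification of $\ker\pi$ strictly compatible, so that the generator $\underline{u}^{p-1}-p$ lifts to exactly the stated element; this is handled by the explicit correspondence $u_{\underline{c}^{(i)}}\leftrightarrow u_i^g$.
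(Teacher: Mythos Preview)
Your proposal is correct and follows the paper's own argument essentially verbatim: assemble the surjection $\psi$ from Proposition~\ref{presentation I} and Proposition~\ref{normalisation1}, identify $\ker\psi$ via the lemma on $\ker\pi$, and lift the generator $\underline{u}^{p-1}-p$ using $u_{\underline{c}^{(i)}}\mapsto u_i^g$. Your final paragraph on replacing the sort condition by $\underline{c}_1+\underline{c}_2=\underline{c}_3+\underline{c}_4$ is not needed for the theorem as stated (which already uses the sort condition); the paper handles that equivalence separately in the remark following the theorem.
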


\begin{remark}
In the ideal defining the normalization of the local model we can replace the condition $(\underline{c}_3, \underline{c}_4)=\text{sort}(\underline{c}_1, \underline{c}_2)$ by $\text{sort}(\underline{c}_1, \underline{c}_2)=\text{sort}(\underline{c}_3, \underline{c}_4)$. A priori this would enlarge the set of generators, but the extra ones can be obtained by the previous ones. Indeed, suppose $\text{sort}(\underline{c}_1, \underline{c}_2)=\text{sort}(\underline{c}_3, \underline{c}_4)$ and let $(\underline{d}_1, \underline{d}_2)=\text{sort}(\underline{c}_1, \underline{c}_2)$: then we have $u_{\underline{c}_1}u_{\underline{c}_2}-u_{\underline{c}_3}u_{\underline{c}_4}= (u_{\underline{c}_1}u_{\underline{c}_2}-u_{\underline{d}_1}u_{\underline{d}_2})-(u_{\underline{c}_3}u_{\underline{c}_4}-u_{\underline{d}_1}u_{\underline{d}_2})$. By Lemma \ref{sort}, $\text{sort}(\underline{c}_1, \underline{c}_2)=\text{sort}(\underline{c}_3, \underline{c}_4)$ if and only if $\underline{c}_1+\underline{c}_2=\underline{c}_3+\underline{c}_4$. Therefore, another possible presentation of the normalization of $C_1$ is:
\[
\Tilde{C}_1\cong \frac{\Zp[u_{\underline{c}} \,|\, \underline{c}\in A]}{\begin{pmatrix}
        u_{\underline{c}_1}u_{\underline{c}_2}-u_{\underline{c}_3}u_{\underline{c}_4} \,|\, \scalebox{0.9}{$\underline{c}_1+\underline{c}_2=\underline{c}_3+\underline{c}_4$}\\
        (u_{\underline{c}^{(0)}} \cdots u_{\underline{c}^{(n-1)}})^{h}-p
    \end{pmatrix}}.
\]
\end{remark}

\subsection{The Haines-Stroh model}\label{HS non norm nor flat}

Recall the Iwahori local model $\text{M}_0^\text{GSp}$ in the Siegel case, as in the discussion after Definition \ref{GSp iwahori integral}. The condition (\ref{condition GSp loc model}) implies that $F_0$ and $F_n$ are isotropic with respect to the pairings $(\cdot, \cdot)$ and $p(\cdot, \cdot)$ respectively, and that $F_i$ determines $F_{2n-i}$ for $i=1,\ldots,n-1$. Hence, we can describe the points of $\text{M}_0^\text{GSp}$ by commutative diagrams of the form

\begin{equation}\label{local model siegel}
\begin{tikzcd}
    R^{2n} \arrow[r, "\varphi_0"] &R^{2n} \arrow[r, "\varphi_1"] &\cdots \arrow[r, "\varphi_{n-2}"] &R^{2n} \arrow[r, "\varphi_{n-1}"] &R^{2n}\\
    F_0 \arrow[r] \arrow[u, hook] &F_1 \arrow[r] \arrow[u, hook] &\cdots \arrow[r] &F_{n-1} \arrow[r] \arrow[u, hook] &F_n, \arrow[u, hook]
\end{tikzcd}
\end{equation}

where $\varphi_i=\text{diag}(1,\dots, p,\dots, 1)$ with $p$ in the $(i+1)$-th component and $F_i$ are direct summands of $R^{2n}$ of rank $n$ such that $F_0$ and $F_n$ are subject to the above conditions. Let $U\subset \text{M}_0^\text{GSp}$ be the open neighborhood of the worst singular point, namely the locus of those $F_\bullet \in \text{M}_0^\text{GSp}$ satisfying
\[
F_i\oplus \bigoplus_{j=n+i+1}^{2n+i}R \text{e}_j =R^{2n}
\]
for each $i=0,\ldots,n$. On this open we can fix a basis for each $F_i$ and write the chain as
    \[
    \begin{tikzcd}
        \setlength\arraycolsep{1pt}
        \scalebox{0.8}{$ \begin{pmatrix}
            1 &   & \\   
            &    & \\
            &    & 1  \\
            a^0_{11}  &\cdots & a^0_{1,n}\\
            \vdots & &\vdots\\
            a^0_{n,1}  &\cdots & a^0_{n,n}
        \end{pmatrix} $} \arrow[r, "\varphi_{0 _{|F_0}}"] 
        &
        \setlength\arraycolsep{1pt}
        \scalebox{0.8}{$ \begin{pmatrix}
            a^1_{n,1}  &\cdots & a^1_{n,n}\\
            1 &   & \\   
            &    & \\
            &    & 1  \\
            a^1_{11}  &\cdots & a^1_{1,n}\\
            \vdots & &\vdots            
        \end{pmatrix} $} \arrow[r, "\varphi_{1 _{|F_1}}"]
        &
        \cdots \arrow[r, "\varphi_{n-1 _{|F_{n-1}}}"]
        &
        \setlength\arraycolsep{1pt}
        \scalebox{0.8}{$ \begin{pmatrix}
            a^n_{11}  &\cdots & a^n_{1,n}\\
            \vdots & &\vdots\\
            a^n_{n,1}  &\cdots & a^n_{n,n} \\
            1 &   & \\   
            &    & \\
            &    & 1  
        \end{pmatrix} $}.
    \end{tikzcd}
    \]
The condition that $\varphi_i$ sends $F_i$ to $F_{i+1}$ can be expressed by $\varphi_i M_i=M_{i+1}A_i$ for some $n\times n$ matrix $A_i$, which is uniquely determined by $M_i, M_{i+1}$ and equal to 
\[
\begin{pmatrix}
0 & 1 &  & \\
  &   &  & \\
  &   & 0 & 1  \\
a^i_{11} &a^i_{12} &\cdots & a^i_{1n}
\end{pmatrix}.
\]
Then one gets the equations defining $U$ by comparing the entries of $\varphi_i M_i$ and $M_{i+1}A_i$ and by imposing the isotropicity conditions on $F_0$ and $F_n$, which take the form of:
\[
a^i_{j,k}=a^i_{n+1-k,n+1-j},
\]
for $i=0,n$ and $j,k=1,\ldots, n$. We write $U=\text{Spec}(B)$ and, in this section, as local model $\text{M}^\text{GSp}_1$ we use the root stack's atlas of type $(1)$.

\begin{definition}\label{def C Siegel}
    Haines-Stroh's local model in the Siegel pro-$p$ Iwahori case is given by $U_1=\text{Spec}(C)$, where
    \[
    C=\frac{B[u_0,v_0,\ldots,u_{n-1},v_{n-1}]}{\left( \scalebox{1.2}{$\substack{u_i^{p-1}-a^{i+1}_{nn}, \, v_i^{p-1}-a^i_{11}\\ u_0v_0-u_iv_i}$} \right)_i}.
    \]
\end{definition}

The next computation will be used to prove that the $\Gamma_1(p)$-local model (hence also the integral model) in the Siegel case is neither normal nor flat.

\begin{lemma}\label{identity for non flat non norm}
    The identity $a^1_{nn}a^0_{12}+a^2_{n-1,1}a^1_{11}=0$ holds in $B$.
\end{lemma}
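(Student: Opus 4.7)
The plan is to derive the identity as a formal consequence of the defining equations of $B$, which come from comparing the entries of the matrix equations $\varphi_i M_i = M_{i+1} A_i$ (for $i = 0, \ldots, n-1$) together with the isotropicity conditions on $F_0$ and $F_n$. Since the target mixes variables drawn from three different matrices $a^0$, $a^1$, $a^2$, I expect it to follow by combining a single relation read off from the $i=0$ equation with one or two relations from the $i=1$ equation.

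I would first extract the ``$p$-row'' relations from the first row of $\varphi_0 M_0 = M_1 A_0$. Because $\varphi_0 = \text{diag}(p, 1, \ldots, 1)$ scales the first row of $M_0$ by $p$ and row $1$ of $M_0$ is the first standard basis vector, the left-hand side of this row is $(p, 0, \ldots, 0)$, while the right-hand side $(a^1_{n,k})_k \cdot A_0$ computes to $(a^1_{nn}\, a^0_{11},\; a^1_{n,1} + a^1_{nn}\, a^0_{12},\; a^1_{n,2} + a^1_{nn}\, a^0_{13},\; \ldots)$. Matching the second entry yields $a^1_{nn}\, a^0_{12} = -a^1_{n,1}$. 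I would then run the analogous argument for the first row of $\varphi_1 M_1 = M_2 A_1$: here $\varphi_1$ scales the second row of $M_1$ rather than the first, so the left-hand side is $(a^1_{n,k})_k$ unchanged. The first row of $M_2$ is $(a^2_{n-1,k})_k$ --- the cyclic shift of non-identity rows places index $n-1$ at the top of $M_2$ --- and comparing the first two coefficients of $(a^2_{n-1,k})\cdot A_1$ with $(a^1_{n,k})$ gives $a^1_{n,1} = a^2_{n-1,n}\, a^1_{11}$ together with $a^2_{n-1,1} = a^1_{n,2} - a^2_{n-1,n}\, a^1_{12}$.

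To conclude, I would substitute these expressions into the target $a^1_{nn}\, a^0_{12} + a^2_{n-1,1}\, a^1_{11}$ and simplify, invoking the remaining ``$p$-row'' relation $a^1_{n,2} = -a^1_{nn}\, a^0_{13}$ from step one. The main obstacle I anticipate is the index bookkeeping: the non-identity rows of $M_i$ cycle with $i$, so the top non-identity row of $M_1$ carries $(a^1_{n,k})$ while that of $M_2$ carries $(a^2_{n-1,k})$, and it is easy to confuse the variables $a^i_{j,k}$ across different $i$ or $j$. Should the purely algebraic substitution not close on the nose, I would also invoke isotropicity of $F_0$ (the relations $a^0_{j,k} = a^0_{n+1-k,\, n+1-j}$) or one of the ``shared non-identity'' chain relations (such as $a^0_{22} = a^1_{11} + a^1_{1n}\, a^0_{12}$) to eliminate residual terms.
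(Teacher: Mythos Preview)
Your approach is correct and matches the paper's proof exactly: the two matrix entries you single out --- the $(1,2)$ entry of $\varphi_0 M_0 = M_1 A_0$, giving $a^1_{nn}\,a^0_{12} = -a^1_{n,1}$, and the $(1,1)$ entry of $\varphi_1 M_1 = M_2 A_1$, giving $a^1_{n,1} = a^2_{n-1,n}\,a^1_{11}$ --- are precisely the two relations the paper uses, and combining them yields the identity immediately.

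The reason your substitution ``does not close on the nose'' is a typo in the stated lemma: the subscript should read $a^2_{n-1,n}$, not $a^2_{n-1,1}$. Your own computation of the $(1,1)$ entry already produces $a^2_{n-1,n}\,a^1_{11}$, and this is also the form in which the identity is actually invoked later in the proof of Theorem~\ref{Teo HS} (where the expression $a^1_{nn}a^0_{12}+a^2_{n-1,n}a^1_{11}$ appears). With the corrected index, the proof finishes after your second relation; the additional relation $a^2_{n-1,1} = a^1_{n,2} - a^2_{n-1,n}\,a^1_{12}$, the relation $a^1_{n,2}=-a^1_{nn}a^0_{13}$, and the contingency plan involving isotropicity are all unnecessary.
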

\begin{proof}
    Comparing the (1,2) entry of the following matrix equation
    
    \begin{gather*}
        \scalebox{0.8}{$ \begin{pmatrix}
            p &   & \\   
            &    & \\
            &    & 1  \\
            a^0_{11}  &\cdots & a^0_{1,n}\\
            \vdots & &\vdots\\
            a^0_{n,1}  &\cdots & a^0_{n,n}
        \end{pmatrix} $}
        =
        \scalebox{0.8}{$ \begin{pmatrix}
            a^1_{n,1}  &\cdots & a^1_{n,n}\\
            1 &   & \\   
            &    & \\
            &    & 1  \\
            a^1_{11}  &\cdots & a^1_{1,n}\\
            \vdots & &\vdots            
        \end{pmatrix} $}
        \cdot
        \scalebox{0.8}{$ \begin{pmatrix}
            0 & 1 &  & \\
            &   &  & \\
            &   & 0 & 1  \\
            a^0_{11} &a^0_{12} &\cdots & a^0_{1n}
        \end{pmatrix}$}
    \end{gather*}
    we obtain that $0=a^1_{n1}+a^1_{nn}a^0_{12}$. Comparing the (1,1) entry of the following matrix equation 
    \begin{gather*}
        \scalebox{0.8}{$ \begin{pmatrix}
            a^1_{n1}  &\cdots & a^1_{nn}\\
            p &   & \\   
            &    & \\
            &    & 1  \\
            a^1_{11}  &\cdots & a^1_{1n}\\
            \vdots & &\vdots            
        \end{pmatrix} $}
        =
        \scalebox{0.8}{$ \begin{pmatrix}
            a^2_{n-1,1}  &\cdots & a^2_{n-1,n}\\
            a^2_{n1}  &\cdots & a^2_{nn}\\
            1 &   & \\   
            &    & \\
            &    & 1  \\
            a^2_{11}  &\cdots & a^2_{1n}\\
            \vdots & &\vdots            
        \end{pmatrix} $}
        \cdot
        \scalebox{0.8}{$ \begin{pmatrix}
            0 & 1 &  & \\
            &   &  & \\
            &   & 0 & 1  \\
            a^1_{11} &a^1_{12} &\cdots & a^1_{1n}
        \end{pmatrix}$}
    \end{gather*}
    we get that $a^1_{n1}=a^2_{n-1,1}a^1_{11}$. This proves the lemma.
\end{proof}

\begin{theorem}\label{Teo HS}
    If $n\neq 1$, the Haines-Stroh local model is not flat over $\Zp$, but it is topologically flat. For $n=1$, the local model is regular.
\end{theorem}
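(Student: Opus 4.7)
To prove the theorem I will treat the three assertions — non-flatness for $n\geq 2$, topological flatness for $n\geq 2$, and regularity for $n=1$ — in turn. The most delicate point is the construction of an explicit non-zero $p$-torsion element of $C$.

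The candidate $p$-torsion element comes from Lemma \ref{identity for non flat non norm}. Substituting $u_i^{p-1}=a^{i+1}_{nn}$ and $v_i^{p-1}=a^i_{11}$ in the identity of the lemma yields $u_0^{p-1}a^0_{12}+v_1^{p-1}a^2_{n-1,1}=0$ in $C$. I set
\[
Y \;:=\; u_0^{p-2}u_1\,a^0_{12}\;+\;v_0 v_1^{p-2}\,a^2_{n-1,1}.
\]
To verify $pY=0$ I will rewrite $p=u_0^{p-1}v_0^{p-1}$, apply the above identity to convert $p\,u_0^{p-2}u_1\,a^0_{12}$ into $-v_0^{p-1}u_0^{p-2}u_1 v_1^{p-1}\,a^2_{n-1,1}$, and then use $u_1 v_1^{p-1}=v_1^{p-2}(u_1v_1)=v_1^{p-2}u_0v_0$ to reabsorb a factor $(u_0v_0)^{p-1}=p$; the outcome is $p\,u_0^{p-2}u_1\,a^0_{12}=-p\,v_0 v_1^{p-2}\,a^2_{n-1,1}$, hence $pY=0$.

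The main obstacle is showing $Y\neq 0$ in $C$. My plan is to pass to the quotient $C/(v_0)$. There the second summand of $Y$ dies and, since $v_0^{p-1}=a^0_{11}$ forces $\bar a^0_{11}=0$ and $u_0v_0=u_iv_i$ collapses to $u_iv_i=0$ for $i\geq 1$, one obtains
\[
C/(v_0)\;\cong\;\frac{(B/(a^0_{11}))[u_0,u_i,v_i]_{i\geq 1}}{\bigl(u_0^{p-1}-\bar a^1_{nn},\,u_i^{p-1}-\bar a^{i+1}_{nn},\,v_i^{p-1}-\bar a^i_{11},\,u_iv_i\bigr)_{i\geq 1}}.
\]
Setting $R=B/(a^0_{11})$, one checks directly from this presentation that $u_0^{p-2}$ is $R$-free while $u_1$ has $R$-annihilator $(\bar a^1_{11})$, because $u_1\,v_1^{p-1}=v_1^{p-2}(u_1v_1)=0$. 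Hence the image of $Y$ is $u_0^{p-2}u_1\,\bar a^0_{12}$, and this is non-zero iff $a^0_{12}\notin (a^0_{11},a^1_{11})\subset B$; this last non-containment I will verify by exhibiting a geometric point of the open $U\subset\text{M}_0^{\text{GSp}}$ at which $a^0_{11}=a^1_{11}=0$ and $a^0_{12}\neq 0$, which is possible because the matrix equations defining $B$ do not place $a^0_{12}$ in the ideal generated by $a^0_{11}$ and $a^1_{11}$.

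The remaining two assertions are more routine. For topological flatness when $n\geq 2$ I will adapt Lemma \ref{top flat unitary} verbatim: given $\bar x\in C(k)$ with $k$ of characteristic $p$, take $R$ a totally ramified DVR extension of $\Z_p$ containing a $(p-1)$-th root $t$ of $p$, lift the underlying $B$-point using the $\Z_p$-flatness of $B$, and for each $i$ use $\bar u_i\bar v_i=0$ to lift the non-vanishing factor to a unit of $R$ and set the other equal to $t$ divided by this unit; this produces a lift $x\in C(R)$ whose generic fiber has all $u_i,v_i$ invertible. Finally, for $n=1$, the isotropicity condition on a one-dimensional subspace of a symplectic plane is vacuous, so $B\cong\Z_p[x,y]/(xy-p)$ and $C\cong\Z_p[u_0,v_0]\big/\bigl((u_0v_0)^{p-1}-p\bigr)$; away from $\{u_0=v_0=0\}$ one of $u_0,v_0$ is a unit in the local ring and $(u_0v_0)^{p-1}-p$ has a non-vanishing partial derivative, while at $\mathfrak m=(p,u_0,v_0)$ one has $(u_0v_0)^{p-1}\in\mathfrak m^{2(p-1)}\subseteq\mathfrak m^2$, so $p\in\mathfrak m^2$ and $\dim_{\kappa(\mathfrak m)}\mathfrak m/\mathfrak m^2=2=\dim C_{\mathfrak m}$, giving regularity.
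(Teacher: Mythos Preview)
Your overall strategy matches the paper's: produce an explicit $p$-torsion element in $C$ via Lemma~\ref{identity for non flat non norm}, check it is nonzero by specialization, prove topological flatness by lifting to a DVR, and treat $n=1$ directly. Your element $Y=u_0^{p-2}u_1\,a^0_{12}+v_0 v_1^{p-2}\,a^2_{n-1,1}$ is a legitimate variant of the paper's $a^0_{12}u_0u_1^{p-2}+a^2_{n-1,n}v_0^{p-2}v_1$ (the exponents on $u_0,u_1$ and on $v_0,v_1$ are interchanged), and your verification that $pY=0$ is correct.

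There is, however, a genuine gap in your non-vanishing argument. After passing to $C/(v_0)$ you assert that the $R$-annihilator of $u_0^{p-2}u_1$ is exactly $(\bar a^1_{11})$, where $R=B/(a^0_{11})$. For $n=2$ this is fine, but for $n\geq 3$ the quotient $C/(v_0)$ is a tensor product $S\otimes_R T_1\otimes_R\cdots\otimes_R T_{n-1}$ in which the factors $T_i=R[u_i,v_i]/(u_i^{p-1}-\bar a^{i+1}_{nn},\,v_i^{p-1}-\bar a^i_{11},\,u_iv_i)$ for $i\geq 2$ are \emph{not} free over $R$; tensoring with them can enlarge annihilators, and you give no argument ruling this out. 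The paper sidesteps this by quotienting by \emph{all} the $v_i$: then $C/(v_0,\ldots,v_{n-1})$ is free over $B/(a^0_{11},\ldots,a^{n-1}_{11})$ with $u_0^{p-2}u_1$ part of a basis, so the non-vanishing reduces cleanly to $a^0_{12}\neq 0$ in $B/(a^0_{11},\ldots,a^{n-1}_{11})$, which is then verified by an explicit $\mathbb F_p$-point. Your element $Y$ survives this deeper quotient just as well, so the fix is simply to kill all $v_i$ rather than only $v_0$; your weaker condition $a^0_{12}\notin(a^0_{11},a^1_{11})$ is not what is actually needed.

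Your topological flatness sketch also does not work as written. If you lift only the underlying $B$-point to $R$, there is no reason $R$ contains a $(p-1)$-th root of the lifted $a^{i+1}_{nn}$, so you cannot manufacture $u_i$ satisfying $u_i^{p-1}=a^{i+1}_{nn}$; and the case $\bar u_i=\bar v_i=0$ (which occurs, e.g., over the worst point) is not covered by ``lift the non-vanishing factor''. The paper instead lifts $\bar x$ to the intermediate ring $D=B[u_i,v_i]/(u_i^{p-1}-a^{i+1}_{nn},\,v_i^{p-1}-a^i_{11})_i$, which is free over $B$ and hence $\Zp$-flat, so a DVR lift exists automatically; one then adjusts a single $u_i$ or $v_i$ (whichever reduces to $0$) by a $(p-1)$-th root of unity to force $u_iv_i=u_0v_0$, exactly as in Lemma~\ref{top flat unitary}. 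Your $n=1$ argument is correct and agrees with the paper.
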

\begin{proof}
    If $n=1$, $B=\Zp[x,y]/(xy-p)$ and we have that
    \[
    C=\frac{B[u,v]}{(u^{p-1}-x,v^{p-1}-y)}=\frac{\Zp[u,v]}{((uv)^{p-1}-p)},
    \]
    which is regular. Suppose that $n>1$. For the non-flatness, we show that $C$ has $p$-torsion:
    \begin{align*}
        &p(a^0_{12}u_0u_1^{p-2}+a^2_{n-1, n}v_0^{p-2}v_1)=\,p(a^0_{12}u_0u_1^{p-2}) + \overset{p}{\overbrace{a^0_{11}a^1_{nn}}}a^2_{n-1,n}v_0^{p-2}v_1=\\ 
        &p(a^0_{12}u_0u_1^{p-2}) + a^0_{11}a^2_{n-1,n} \overset{a^1_{nn}}{\overbrace{u_0^{p-1}}} v_0^{p-2}v_1 = p(a^0_{12}u_0u_1^{p-2})+ a^0_{11}a^2_{n-1,n}u_0 \overset{(u_0v_0)^{p-2}}{\overbrace{(u_1v_1)^{p-2}}} v_1 =\\
        &u_0u_1^{p-2}(a^0_{11}a^1_{nn}a^0_{12}+a^0_{11}a^2_{n-1,n}\overset{v_1^{p-1}}{\overbrace{a^1_{11}}})=0,
    \end{align*}
where the last equality follows from Lemma \ref{identity for non flat non norm}. We need to show that $a^0_{12}u_0u_1^{p-2}+a^2_{n-1, n}v_0^{p-2}v_1\neq 0$ in $C$. If this was the case, we would obtain that $a^0_{12}u_0u_1^{p-2}=0$ in $C/(v_0,\ldots,v_n)$, which is isomorphic to
\[
\frac{B[u_0,\ldots, u_{n-1}]}{(a^i_{11}, \, u_i^{p-1}-a^{i+1}_{nn})_i}.
\]
This happens if and only if $a^0_{12}=0$ in $B/(a^0_{11},\ldots,a^{n-1}_{11})$, so the next claim proves the non-flatness assertion.

\begin{claim}
    $a^0_{12}\neq0$ in $B/(a^0_{11},\ldots,a^{n-1}_{11})$.
\end{claim}
\begin{proofc}
    It is easy to check that putting
    \[
    a^0_{12}=a^i_{n-i-1,n-i}=1 \quad \text{for} \, i=0,\ldots, n-2,
    \]
    and all the other variables equal to zero one gets an $\Fp$-valued point of $U$ on which $a^0_{11}=\cdots=a^{n-1}_{11}=0$ and $a^0_{12}\neq 0$. This finishes the proof of the claim.
\end{proofc}

For the topological flatness, we argue as in Lemma \ref{top flat unitary}; let us first write 
\[
    D=\frac{B[u_0,v_0,\ldots,u_{n-1},v_{n-1}]}{\left( \scalebox{1.2}{$\substack{u_i^{p-1}-a^{i+1}_{nn}, \, v_i^{p-1}-a^i_{11}}$} \right)_i}.
\]
Let $\bar{x}\in C(k)$ be a characteristic $p$ point of $C$: by the flatness of $D$, we find a mixed characteristic DVR $R$ with residue field $k$ and $x\in D(R)$ such that $x_k=\bar{x}$. Let $r_i,s_i\in R$ the images of $u_i,v_i$ under $x$. Inside the fraction field $K$ of $R$ we have, for each $i=1,\ldots,n-1$,
\[
\left( \frac{r_0s_0}{r_is_i} \right)^{p-1}=1
\]
thus there exists $\zeta_i\in\mu_{p-1}$ such that $r_0s_0=\zeta_i r_is_i$. Now, at least one between $r_i$ and $s_i$ has to be zero in $k$, say $r_i$: replacing it with $\zeta_ir_i$ gives us the desired $R$-point of $U_1$ lifting $\bar{x}$.

\end{proof}

\begin{remark}
    \begin{itemize}
        \item [$(i)$] Pappas had already checked with a computer that the Haines-Stroh local model for $\text{GSp}_4$ in the case when $p=5$ is not flat.
        \item [$(ii)$] From the topological flatness it follows that the $p$-torsion of $U_1$ is contained in its nilradical. On the other hand, the flat closure of $C$ embeds inside its generic fiber, which is reduced, hence the $p$-torsion coincides with the nilradical.
        \item [$(iii)$] In \cite{HS}, Haines and Stroh prove that their model is regular in codimension one. In light of the non reducedness, we can conclude that it is not $S_1$ (hence not Cohen-Macaulay). This is in contrast with Shadrach's model, which is finite free over the Iwahori model and therefore it is Cohen-Macaulay, but it is not regular in codimension one.

        \item [$(iv)$] It is an interesting question whether the flat closure of $C$ is normal. Since it coincides with its reduction, it is regular in codimension one and to check the $S_2$ property it is enough to show that the special fiber is $S_1$. However, computer evidence in the $\text{GSp}_4$ show that this is unlikely to be the case.

    \end{itemize}
\end{remark}

\newpage


\begin{thebibliography}{99}
\addcontentsline{toc}{chapter}{Bibliography}

\bibitem[BBM]{BBM}
P. Berthelot, L. Breen, W. Messing; 
\newblock { \em  Théorie de Dieudonné cristalline.}
\newblock vol.2, LNM 930, Springer 1982.

\bibitem[C]{Cad}
C. Cadman; 
\newblock { \em  Using stacks to impose tangency conditions on curves.}
\newblock Amer. J. Math. 129 (2007), 405-427.


\bibitem[G1]{Goer}
U. Görtz; 
\newblock { \em  On the flatness of models of certain Shimura varieties of PEL type.}
\newblock Math. Ann. 321 (2001), 689-727.

\bibitem[G2]{Goer2}
U. Görtz; 
\newblock { \em  On the ﬂatness of local models for the symplectic
group.}
\newblock Adv. Math. 321 (2003), 89-115.


\bibitem[Hai]{Hai}
T. Haines;
\newblock { \em Introduction to Shimura varieties with bad reduction of parahoric type.}
\newblock Harmonic analysis, the trace formula and Shimura varieties (2005), 583 - 642.


\bibitem[HR]{HR}
T. Haines, M. Rapoport;
\newblock { \em Shimura varieties with $\Gamma_1(p)$ level via Hecke algebra isomorphisms: the Drinfeld case.}
\newblock Ann. Sci. Ecole Norm. Sup. 45(4) (2012), 719–785.

\bibitem[HS]{HS}
T. Haines, B. Stroh;
\newblock { \em Local models and nearby cycles for $\Gamma_1(p)$-level structure.}
\newblock In preparation.

\bibitem[HT]{HT}
M.harris, R. Taylor;
\newblock { \em Regular models of certain Shimura varieties.}
\newblock Asian J. Math. (2002), 61-94.

\bibitem[He]{He}
X. He;
\newblock { \em Normality and Cohen-Macaulayness of local models of Shimura varieties.}
\newblock Duke Math. Journal 162 (2013), 2509-2523.

\bibitem[HHO]{Herz}
 J. Herzog , T. Hibi , H. Ohsugi;
\newblock { \em Binomial Ideals.}
\newblock Graduate Texts in Mathematics, Springer Cham (2018).


\bibitem[KM]{KM}
 N. M. Katz, B. Mazur;
\newblock { \em Arithmetic moduli of elliptic curves.}
\newblock Princeton university press (1985).

\bibitem[KR]{KR}
R. Kottwitz, M. Rapoport;
\newblock { \em Minuscule alcoves for $GL_n$ and $GSp_{2n}$.}
\newblock Manuscripta Math. 102 (2000), 403–428.

\bibitem[L]{Liu}
 S. Liu;
\newblock { \em Modèle local des schémas de Hilbert–Siegel de niveau $\Gamma_1(p)$.}
\newblock Algebra Number Theory 15 (7) 1655 - 1698, (2021).

\bibitem[OT]{OT}
J. Tate, F. Oort;
\newblock { \em Group schemes of prime order.}
\newblock Annales Scientifiques de l’É.N.S..

\bibitem[P]{Pap}
G. Pappas;
\newblock { \em Arithmetic models for Hilbert-Blumenthal modular varieties.}
\newblock Comp. Math. 98.1, 43-76, (1995). 


\bibitem[RZ]{RZ}
M. Rapoport, T. Zink;
\newblock { \em Period spaces for $p$-divisible groups}
\newblock Annals of Mathematics Studies, Vol. 141 (1996). 

\bibitem[S]{Sha}
R. Shadrach;
\newblock { \em Integral models of certain PEL Shimura varieties with $\Gamma_1(p)$-type level structure.}
\newblock Manuscripta Math. 151 (2016), 49-86.

\bibitem[St]{Stu}
B. Sturmfels;
\newblock { \em Gr$\ddot{o}$bner bases and convex polytopes.}
\newblock American Mathematical Society,
Providence (1996).

\end{thebibliography}
\end{document}